\newtheorem{theorem}{Theorem}[section]
\newtheorem{lemma}{Lemma}
\newtheorem{corollary}{Corollary}
\newtheorem{proposition}{Proposition}
\newtheorem{assumption}{Assumption}
\theoremstyle{definition}
\newtheorem{example}[theorem]{Example}
\theoremstyle{remark}
\newtheorem{remark}[theorem]{Remark}
\numberwithin{equation}{section}
\newcommand{\R}{\mathbb{R}}
\newcommand{\dif}{\mathrm{d}}
\newcommand{\N}{\mathcal{N}}
\newcommand{\Nat}{\mathbb{N}}
\newcommand{\Z}{\mathbb{Z}}
\newcommand{\Mat}{\tilde{\phi}}
\newcommand{\opJ}{\mathcal{J}}
\newcommand{\opL}{\mathcal{L}}
\newcommand{\dist}{\mathrm{dist}}
\newcommand{\I}{\mathcal{I}}
\newcommand{\Sph}{\mathbb{S}}
\newcommand{\Hom}{\mathsf{h}}
\newcommand{\Sch}{\mathcal{S}}
\newcommand{\F}{\mathcal{F}}
\newcommand{\ta}{\tilde{a}}
\newcommand{\EE}{\mathcal{E}_{\Omega,{\Xi}}}
\newcommand{\Manoa}{M\=anoa}
\newcommand{\Hawaii}{Hawai\kern.05em`\kern.05em\relax i }
\newcommand{\m}{{m}_0}
\begin{document}

\title[Extending error bounds for radial basis function interpolation]{Extending error bounds for radial basis function interpolation to measuring the error in higher order Sobolev norms}

\author{T.~Hangelbroek}
\address{Department of Mathematics, University of \Hawaii   -- \Manoa,
2565 McCarthy Mall, Honolulu, HI 96822, USA }
\curraddr{}
\email{hangelbr@math.hawaii.edu}
\thanks{Research of this author was supported by  by grant DMS-2010051 from the National
    Science Foundation.}

\author{C.~Rieger}
\address{Philipps-Universit\"at Marburg, 
    Department of Mathematics and Computer Science,
Hans-Meerwein-Stra\ss{}e 6, 35032 Marburg, Germany}
\curraddr{}
\email{riegerc@mathematik.uni-marburg.de}
\thanks{}

\subjclass[2010]{Primary 41A17, 41A25, 65D12}

\date{}

\dedicatory{}

\begin{abstract}
Radial basis functions (RBFs) are prominent examples for reproducing kernels 
with associated reproducing kernel Hilbert spaces (RKHSs).
The convergence theory for the kernel-based interpolation in that space is well understood 
and optimal rates for the whole RKHS are often known.
Schaback added the doubling trick \cite{schaback_doubling},
which shows
that functions having double the smoothness required by the RKHS 
(along with specific, albeit complicated boundary behavior)
can be approximated 
with higher convergence rates than the optimal rates for the whole space.
Other advances allowed interpolation of target  functions which are less smooth, 
and different norms which measure interpolation error.
The current state of the art of error analysis for RBF interpolation
 treats target functions having smoothness up to twice that of the native space,
 but error measured in norms which are weaker than that required for membership in the RKHS.

Motivated by the fact that the kernels and the approximants they generate
 are smoother than required by the native space, 
this article extends the doubling trick 
to error which measures  higher smoothness.
This extension holds for a family of kernels satisfying easily checked hypotheses which 
we describe in this article, and includes many prominent RBFs.
In the course of the proof, new convergence rates are obtained for the abstract operator 
considered by Devore and Ron in \cite{devron}, and new Bernstein estimates are obtained relating
high order smoothness norms to the native space norm.
\end{abstract}

\maketitle

\section{Introduction}\label{S:Intro}
A hallmark of the mathematical theory of radial basis functions (RBFs), 
is the well-posedness of interpolation at scattered sites.
In the simplest setting, a finite set $X\subset \R^d$ generates a
 basic finite dimensional space $V_X = \mathrm{span}\{\phi(\cdot -x)\mid x\in X\}$, using 
 the RBF $\phi:\R^d\to \R$, which is a
  continuous, positive definite, radially symmetric function.
 Interpolation at the sites $X$ is well-posed; to any  function $f:\R^d\to \R$, defined at sites $X$, there is
a unique continuous RBF interpolant $I_X f\in V_X$. 
The current state of the art treats error measured in a variety of Sobolev norms up to a critical 
order determined by the RBF.
The goal of this paper is to provide a new error analysis for RBF interpolation
treating errors measured in Sobolev norms higher than this critical order.

The motivation to extend the range of error estimates to  higher  order Sobolev norms
stems from different mathematical areas.
The first motivation stems from approximation theory. 
To determine the exact range of parameters in which rigorous error estimates can be shown is a natural question, 
one which has been considered for radial basis functions in \cite{D1,GM, NWW, NWW06}.

Measuring error in higher Sobolev norms has also gained attention in the context of deep learning. 
See, for example, \cite{deryck:etal:2021,guhring2020error}.
There are several reasons to include derivative information in the loss 
function for training deep neural networks.
One motivation stems from the observation that including derivative
information can improve the performance of the predictive error in 
learning, see \cite{czarnecki2017sobolev}.
Another  stems from the fact that 
machine learning techniques 
have become an incredibly popular tool to solve partial differential equations
-- this includes using deep neural networks, but also Gaussian processes and kernels, see \cite{chen2021solving}.
This aspect is closely connected to the next motivation.

A particularly strong motivation comes from using RBFs as tools for mesh-free solution of PDEs.
In this regard, we mention 
the cosmos of {(pseudo-)spectral methods,}
although other approaches, namely
Galerkin and RBF-FD methods can also benefit. 
Traditionally, in spectral methods one considers (orthogonal) polynomials 
for which such approximation results are also available, 
see for instance  \cite[Theorem 2.2]{canuto:quarteroni:1982} where also error estimates 
in higher (weighted) Sobolev norms are discussed.
We consider now radial basis functions and pseudo-spectral methods,
see \cite{fasshauer:2005} for an introduction into the topic. 
See also \cite{Zhao:etal:2018} for non-standard differential operators.
The overall problem is that one seeks a finite dimensional approximation to a linear differential operator $\mathcal{L}$. 
A main focus in pseudo-spectral methods is that the approximate operator should be applicable 
to a function $f$ from which only its values on a discrete set of possibly scattered points $X$ are known.
A common approach is to consider a kernel-based interpolation $I_X {f}$ to the function 
and to consider the differential operator applied to the interpolant as discretized differential operator. 
To formally justify this procedure a consistency argument of the following form 
\begin{align*}
	f \approx I_{X}(f) \Rightarrow \mathcal{L}f \approx \mathcal{L}I_{X}f
\end{align*}  
is needed. 
Such estimates can be rigorously proven if the interpolation error measured in high 
(depending on $\mathcal{L}$) Sobolev norms can be controlled.

%
%
%
\subsection{The doubling trick}
For each positive definite RBF $\phi$, there is an associated 
reproducing kernel Hilbert space $\N(\phi)\subset C(\R^d)$, 
the {\em native space}, for which $(x,y)\mapsto \phi(x-y)$ is the reproducing kernel. 
The native space has an associated error analysis
for interpolation which works as follows:
the interpolation operator $I_X$ is
the $\N(\phi)$-orthogonal projector onto $V_X$. Thus $\|I_X f-f\|_{\N(\phi)} \le \|f\|_{\N(\phi)}$, which leads, thanks to the embedding
$\N(\phi)\subset C(\R^d)$ to (tautological) pointwise bounds of the form
$$|f(x)-I_X f(x)| \le P_X(x) \|f\|_{\N(\phi)},$$ where $P_X(x) = \max_{\|f\|_{\N(\phi)}=1} |I_X f(x)-f(x)|$ is called
the {\em power function}. Often, the power function can
be made small when $X$ is  well distributed near to $x$.

This natural error estimate can be improved by the {\em doubling trick} for RBFs, originally described in 
\cite{schaback_doubling}.
It is the RBF version of the classical 
Aubin-Nitsche trick \cite{aubin,nitsche}   used in the theory of Finite Elements.
Roughly, it guarantees that a function $f\in \N(\phi*\phi)\cap \N(\phi)$ 
which has deconvolution $v =(\widehat{f}/\widehat{\phi})^{\vee}$ supported in
a compact set $\Omega$
satisfies 
$$\|f-If\|_{\N(\phi)} \le \|f\|_{ \N_{\phi*\phi} } \|P_X\|_{L_2(\Omega)} .$$
A particularly strong version of this result considers $\phi$ with $\N(\phi)$ norm equivalent to $W_2^m(\R^d)$,
  $m>d/2$, and $\Omega\subset \R^d$ a compact set satisfying an interior cone condition. 
  In that case, $\N(\phi*\phi)=W_2^{2m}(\R^d)$,
and the result of applying the doubling trick gives
\begin{equation}
\label{classical}
\|f-I_Xf\|_{W_2^{\sigma}(\Omega)} \le Ch^{2m-\sigma}\|f\|_{W_2^{2m}(\R^d)}
\quad \text{for all $0\le \sigma \le m$.}
\end{equation}
Here $h:= \max_{x\in\Omega}\dist(x,X)$ is the fill distance of $X$ in $\Omega$.

Interestingly, in the case that the native space is  $W_2^m(\R^d)$, 
the RBF $\phi$, along with the finite dimensional space $V_X$,  
lies in $W_p^{\sigma}(\R^d)$,
for $\sigma<2m-d+d/p$.  
Thus, it is reasonable to ask if $I_X f$ converges to $f$ in higher order Sobolev norms when $f$ satisfies the conditions
required for the doubling trick.
This is the problem we seek to answer.

Precisely, we will show in Theorem \ref{main_interpolation} that for the interpolation error for $f=\phi*\nu\in W_2^{2m}(\R^d)$ with $\nu\in L_2(\R^d)$ having
support in $\Omega$ and with a suitable positive definite kernel
\begin{equation}\label{intro_doubling}
\|f-I_Xf\|_{W_2^{\sigma}(\Omega)}
\le 
C h^{m}q^{m-\sigma}\|\nu\|_{L_2(\R^d)}
\end{equation}
holds for  all sufficiently dense subsets and for any $m<\sigma$ for which $\lceil \sigma\rceil <2m-d/2$. 
Here we employ the separation radius
$q:= \frac12 \min_{\xi\in X}\dist(\xi,X\setminus \{\xi\})$.
Under conditions of quasi-uniformity of $X$ (i.e., when $h\le \rho q$ for some constant $\rho>1$)
and using the original result \cite{schaback_doubling}, this yields 
$$
\|f-I_Xf\|_{W_2^{\sigma}(\Omega)}
\le
C h^{2m-\sigma}\|\nu\|_{L_2(\R^d)}
$$
for all $\sigma>0$ such that $\lceil \sigma\rceil<2m-d/2$.

At this point, we note that we might have obtained such approximation orders for the case $\sigma>m$ 
by using a smoother kernel of order greater than $\sigma$ and employing the classical result (\ref{classical}).
However, following the general guideline \cite[Guideline 3.11]{schaback_wendland_2006} 
it is favorable to use the least smooth kernel to obtain a given approximation rate.

Suppose we are given a function $f\in W_2^{2m}(\R^d)$ 
supported in $\Omega$
and we would like to measure the error in the 
$W_2^{m+n}(\Omega)$ norm for $0 \le n < m-d/2$.
Standard error estimates would use the kernel $\phi_{2m}$  with $\N_{\phi_{2m}}=W_2^{2m}(\R^d)$ to construct the interpolant. 
In that case,  the resulting linear system would have condition number of order $q^{d-4m}$.
Exploiting the doubling trick of Schaback, one could use the kernel of  $W_2^{m+n}(\R^d)$ to construct an interpolation and adopt his error analysis to this case. This would still lead to a condition number of order $q^{d-2m -2n}$. 
We point out that our novel error analysis rigorously justifies to use the kernel of $W_2^{m}(\R^d)$ and hence we have to expect a condition number of order $q^{d-2m}$ which is the least one of those choices.

%
%
%
\subsection{Outline}
We introduce notation and  present  some necessary background on RBF interpolation in section \ref{SS:background}.

In section \ref{S:Bernstein} we present high order Bernstein inequalities and discuss their application
to three prominent families of RBFs: surface splines  (introduced in Example \ref{TPS_1}),
Mat{\'e}rn kernels (described  in Example \ref{Matern_example_1}),
and various compactly supported kernels including Wendland's kernels of minimal degree
(described  in Example \ref{Wendland_example_1}).

In section \ref{S:Approx} we introduce an integral-based approximation scheme
$T_X: f\mapsto T_X f\in V_X(\phi)$ and discuss its error analysis. 
The application of this approximation scheme is then discussed for surface splines, Mat{\'e}rn kernels and compactly
supported kernels.

Section \ref{S:main_PD} gives interpolation error in the case that the RBF $\phi$ is positive definite;
this setting yields the result (\ref{intro_doubling}) mentioned earlier.
This  applies to Mat{\'e}rn kernels and some compactly supported RBFs.
A precise discussion of the compactly supported kernels for which this works is given in section \ref{S:compact_support}.

Section \ref{S:main_CPD} gives interpolation error in the case that the RBF $\phi$ is conditionally positive definite. 
This  is requires a bit more care than the positive definite case; in particular, the results require extra hypotheses
which are not present in  Section \ref{S:main_PD}. 
Section \ref{SS:algebraic} shows how these hypotheses can be met for an RBF whose Fourier transform
has an algebraic singularity.
 Section \ref{SS:surface_splines} treats the surface splines, and derives estimates in terms of the fill distance.
 
 In Appendix \ref{appendix}, we prove a lemma about regularity of local polynomial reproductions which is
 used in section \ref{S:Approx}, but which may find use beyond the scope of this article.
 
%
%
%
%
\section{Notation and Background}
\label{SS:background}
Throughout the article, we will use $C$ as a generic positive constant whose value
may change from line to line.

 Denote by $x\mapsto |x|$  the Euclidean norm in $\R^d$  and  let $(x,y)\mapsto \dist(x,y)=|x-y|$ be the corresponding distance. 
Let $\Omega\subset \R^d$ and 
$\Xi\subset \Omega$ a finite subset.
Define the {\em separation radius} 
by
$q_{\Xi}:= \frac12 \min_{\xi\in {\Xi}}\dist(\xi,{\Xi}\setminus \{\xi\})$,
the {\em fill distance} %
by 
$h_{\Xi,\Omega}:=\sup_{x\in\Omega} \dist(x,\Xi)$, 
and
the {\em mesh ratio} 
by
$\rho_{\Xi,\Omega}:=\frac{h_{\Xi,\Omega}}{q_\Xi}$.
When the underlying sets are clear from context, 
we will simply write $h$, $q$ and $\rho$.

Define the space of polynomials by  $\mathcal{P}(\R^d)$ and the subspace of polynomials of degree $m$ or less by $\mathcal{P}_m(\R^d)$.
The space of Schwartz functions is denoted $\mathcal{S}(\R^d)$.
 The Fourier transform of a Schwartz function is 
 $$\F{\gamma}(\omega) = (2\pi)^{-d/2} \int_{\R^d} \gamma(x) e^{-ix\cdot \omega}\dif x.$$
and for tempered distributions, the Fourier transform $\F u$ is the distribution which satisfies
$\langle\F u,\gamma\rangle :=\langle u,\F \gamma\rangle$ for all $\gamma\in\Sch(\R^d)$.
For $m\ge0$, 
define  
$$
\mathcal{S}_{m}(\R^d) 
:=
\{\gamma\in \mathcal{S}(\R^d)\mid \sup_{\omega\in \R^d} |\omega |^{-\m}\gamma(\omega)<\infty\}.
$$
If the distributional Fourier transform of $\phi$ coincides
on $\R^d\setminus\{0\}$ with a measurable function which represents the
 Fourier transform
on  
$\mathcal{S}_{2m}(\R^d)$
then it has {\em generalized Fourier transform} of order $m$.
Denoting the generalized Fourier transform of $\phi$  
by $\widehat{\phi}:\R^d\setminus\{0\}\to \mathbb{C}$, 
the above definition is equivalent to 
 the identity
$$
(\forall \gamma\in \Sch_{2m}(\R^d))\quad
\int_{\R^d} 
\widehat{\phi}(\omega) \gamma(\omega) 
\dif \omega
= 
\int_{\R^d} 
\F\gamma(\omega)
 \phi(\omega)
\dif \omega.
$$
See \cite{GV,Jones} for more background.
\smallskip

\paragraph{\bf Sobolev spaces}
We recall (see for instance \cite[Definition 2.39]{Abels}) the Bessel potential operators which are defined for tempered
 distributions via the formula
 $$
\F(\opJ_s f)
 =  
  (1+|\omega|^2)^{s/2}
  \F f.$$
 For $\tau \ge0$ we define the Sobolev space $H^\tau $ via Bessel potentials: 
 that is, $H^\tau$ is the space of  all $u\in L_2(\R^d)$  such that 
$ \opJ_s u\in L_2(\R^d)$. 
 Its norm is
 $$\|u\|_{H^{\tau}}^2 := 
 \int_{\R^d}
 \F u(\omega)
 (1+|\omega|^2)^{\tau/2}\dif \omega.$$
 For any $s,\tau\in\R$,
$\opJ_s : H^{\tau} \to H^{\tau-s}$
 is an isometry between Sobolev spaces.
 
The space $\dot{H}^\tau$ consists of distributions $u$ for where there exists $p\in \mathcal{P}$
such that $(-\Delta)^{\tau/2} u- p \in L_2(\R^d)$. 
If $u$ has generalized Fourier transform of some order, then 
$$
|u|_{\dot{H}_2^{\tau}}
:= 
\bigl\||\cdot|^{\tau} \widehat{u}\bigr\|_{L_2(\R^d)}.
$$

 In order to  work on compact sets $\Omega\subset \R^d$
we also need Sobolev spaces on domains.
For  $k\in\Nat$, we define the Sobolev space $W_2^k(\Omega)$  
to be all  functions $u$ with distributional derivatives $D^{\alpha} u \in L_2(\Omega)$
 for all $|\alpha| \le k$. 
Associated with these spaces are the 
seminorms
$$|u|_{W_2^k(\Omega)}^2: = \sum_{|\alpha|=k} \frac{k!}{\alpha!}\|D^{\alpha}u\|_{L_2(\Omega)}^2$$
and norms $\|u\|_{W_2^k(\Omega)}^2:=\sum_{j=0}^k |u|_{W_2^k(\Omega)}^2$.
For fractional order Sobolev spaces, we use the seminorm
$$
|u|_{W_2^{k+s}(\Omega)}^2 
= 
\sum_{|\alpha|=k}
 \int_{\Omega}\int_{\Omega}
 	\frac{|D^{\alpha}u(x)-D^{\alpha}u(y)|}{|x-y|^{d+2s}}
\dif x\dif y
$$
and norm $\|u\|_{W_2^{k+s}(\Omega)}^2:=\sum_{j=0}^k |u|_{W_2^k(\Omega)}^2+|u|_{W_2^{k+s}(\Omega)}^2 $.
It is well known that $W_2^{\tau}(\R^d)$ and $H^{\tau}$ have equivalent norms.  To ensure equivalence between 
seminorms, we use the following lemma.
\begin{lemma} 
\label{Sob_equiv}
If $u$ has generalized Fourier transform of order $\lfloor\tau\rfloor/2$, then the seminorms
$|u|_{W_2^{\tau}(\R^d)}$ and $ |u|_{\dot{H}^{\tau}}$ are equivalent.
\end{lemma}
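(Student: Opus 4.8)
The plan is to pass everything to the Fourier side and compare the resulting integral weights. Write $\tau = k+s$ with $k = \lfloor\tau\rfloor$ and $s\in[0,1)$. I treat the integer case $s=0$ first: since $\widehat{D^{\alpha}u}(\omega)=(i\omega)^{\alpha}\widehat{u}(\omega)$, Plancherel gives $\|D^{\alpha}u\|_{L_2(\R^d)}^2 = \int_{\R^d}\omega^{2\alpha}|\widehat{u}(\omega)|^2\dif\omega$ for each $|\alpha|=k$. Summing against the multinomial weights and using $\sum_{|\alpha|=k}\frac{k!}{\alpha!}\omega^{2\alpha}=|\omega|^{2k}$ yields $|u|_{W_2^{k}(\R^d)}^2 = \int_{\R^d}|\omega|^{2k}|\widehat{u}(\omega)|^2\dif\omega = |u|_{\dot{H}^{k}}^2$; here the two seminorms are in fact exactly equal.

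For the fractional case $0<s<1$ the crux is a Fourier representation of the Gagliardo double integral. For $g\in L_2(\R^d)$ I would substitute $y=x+z$ and apply Plancherel in $x$ (using $\F(g(\cdot+z))(\omega)=e^{iz\cdot\omega}\F g(\omega)$) to obtain
$$
\int_{\R^d}\int_{\R^d}\frac{|g(x)-g(y)|^2}{|x-y|^{d+2s}}\dif x\dif y
=\int_{\R^d}|\widehat{g}(\omega)|^2\Bigl(\int_{\R^d}\frac{|1-e^{i\omega\cdot z}|^2}{|z|^{d+2s}}\dif z\Bigr)\dif\omega,
$$
the interchange being legitimate by Tonelli since the integrand is nonnegative. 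A rotation aligning $\omega$ with a coordinate axis followed by the scaling $z\mapsto z/|\omega|$ shows the inner integral equals $C_{d,s}|\omega|^{2s}$, where $C_{d,s}=\int_{\R^d}|1-e^{iz_1}|^2|z|^{-d-2s}\dif z$ is finite and positive precisely because $0<s<1$: the factor $|1-e^{iz_1}|^2\sim z_1^2$ tames the singularity at the origin while $d+2s>d$ controls the tail.

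Applying this identity to $g=D^{\alpha}u$ with $|\alpha|=k$ and using $|\widehat{D^{\alpha}u}|^2=\omega^{2\alpha}|\widehat{u}|^2$, then summing over $|\alpha|=k$, gives $|u|_{W_2^{\tau}(\R^d)}^2 = C_{d,s}\int_{\R^d}|\omega|^{2s}\bigl(\sum_{|\alpha|=k}\omega^{2\alpha}\bigr)|\widehat{u}(\omega)|^2\dif\omega$. Because $\omega\mapsto\sum_{|\alpha|=k}\omega^{2\alpha}$ is a positive homogeneous polynomial of degree $2k$, comparing its values on the unit sphere and invoking homogeneity bounds it above and below by constant multiples of $|\omega|^{2k}$. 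Hence the last integral is comparable to $\int_{\R^d}|\omega|^{2(k+s)}|\widehat{u}|^2\dif\omega=|u|_{\dot{H}^{\tau}}^2$, and in particular one seminorm is finite exactly when the other is, with the two-sided bound following from the pointwise comparability of integrands. (Unlike the integer case, this is a genuine equivalence rather than an equality, since the fractional seminorm carries no multinomial weights.)

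The hard part is not the calculation but justifying it under the stated hypothesis, namely that $u$ possesses only a generalized Fourier transform of order $\lfloor\tau\rfloor/2=k/2$: the function $\widehat{u}$ may fail to be locally integrable near the origin, and $u$ need not admit an ordinary Fourier transform at all. The feature to exploit is that every seminorm in sight involves only derivatives of order exactly $k$, so only the product $\omega^{\alpha}\widehat{u}$ with $|\alpha|=k$ ever appears, and this prefactor of degree $k$ cancels the admissible origin singularity. The order-$k/2$ hypothesis is exactly what permits Schwartz functions vanishing to order $k$ at the origin to pair with $\widehat{u}$, so each $D^{\alpha}u$ is a genuine $L_2(\R^d)$ function with $\widehat{D^{\alpha}u}=(i\omega)^{\alpha}\widehat{u}$, to which Plancherel legitimately applies. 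I would therefore run the Gagliardo computation at the level of $g=D^{\alpha}u\in L_2(\R^d)$, where no singularity is present, and only reassemble the common weight $\sum_{|\alpha|=k}\omega^{2\alpha}|\widehat{u}|^2$ at the very end, keeping every integral manifestly well defined irrespective of the behaviour of $\widehat{u}$ itself at the origin.
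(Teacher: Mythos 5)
Your argument is correct and is essentially the paper's own proof: both reduce to the derivatives $D^{\alpha}u$ with $|\alpha|=\lfloor\tau\rfloor$, use the generalized Fourier transform hypothesis to identify $\F(D^{\alpha}u)=(i\omega)^{\alpha}\widehat{u}$, settle the integer case by Plancherel, and settle the fractional case through the Fourier representation of the Gagliardo seminorm. The only real difference is that you prove that representation from scratch (the $y=x+z$ substitution, Tonelli, and the scaling computation of the constant $C_{d,s}$) where the paper simply cites \cite[Proposition 3.4]{Hitch}; note, however, that your closing assertion that the hypothesis makes each $D^{\alpha}u$ a genuine $L_2(\R^d)$ function is not justified as stated (the pairing argument only yields that $(i\omega)^{\alpha}\widehat{u}$ is locally integrable and represents the distributional Fourier transform of $D^{\alpha}u$, not that it is square integrable), a tacit assumption that the paper's appeal to that proposition shares.
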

\begin{proof}
Note that  $D^{\alpha}\gamma\in \Sch_{|\alpha|}$ for every $\gamma\in \Sch$,
so  if $u$ has generalized Fourier transform of order $\sigma$ and $|\alpha|\ge \sigma/2$ 
then
$$\int_{\R^d} (i\omega)^{\alpha} \widehat{u}(\omega) \widehat{\gamma}(\omega) \dif \omega= \int_{\R^d}  \bigl(D^{\alpha} u(x) \bigr)\gamma(x)\dif x$$ holds for all 
$\gamma\in \Sch$.
So the distributional Fourier transform of $D^{\alpha} u$ 
is  represented by the locally integrable function
$\F(D^{\alpha} u )(\omega)=  (i\omega)^{\alpha} \widehat{u}(\omega)$.
%

In case  $\tau=k\in\Nat$,
it follows that for $u$ with generalized Fourier transform of order $k/2$, we have
$\int_{\R^d} |D^{\alpha} u(x)|^2 \dif x = \int_{\R^d} |\omega^{\alpha}\widehat{u}(\omega)|^2\dif \omega$, so 
$|u|_{\dot{W}_2^k(\R^d)} \sim  |u|_{\dot{H}^{k}}$.

In case $\tau\notin \Nat$ with $s=\tau-\lfloor \tau\rfloor$,
we use  \cite[Proposition 3.4]{Hitch}
to show that, for a multiindex $|\alpha|=\lfloor \tau\rfloor$,
$|D^{\alpha} u|_{\dot{H}^s} \sim |D^{\alpha} u|_{W_2^s(\R^d)}$.
Applying this to the definition gives 
$
|u|_{W_2^{\tau}(\R^d)}^2 
\sim \sum_{|\alpha|=\lfloor \tau\rfloor} |D^{\alpha} u|_{\dot{H}^s}^2$.
The norm equivalence
$ \sum_{|\alpha|=\lfloor \tau\rfloor} |D^{\alpha} u|_{\dot{H}^s}^2
\sim
|u|_{\dot{H}^{\tau}}^2$
follows because $\F(D^{\alpha} u )(\omega)=  (i\omega)^{\alpha} \widehat{u}(\omega)$ for $\omega\neq 0$.
\end{proof}

 %
 %
 %
 %
 \paragraph{\bf Radial basis functions and native spaces}
 A function $\phi:\R^d\to \R$ is {\em conditionally positive definite of order $\m$} (hereafter abbreviated by CPD)  
 if the following holds: for any finite
$\Xi\subset \R^d$,
the {\em collocation matrix}
$$\Phi_{\Xi}  = \bigl(\phi(\xi-\zeta)\bigr)_{\xi,\zeta\in \Xi}$$
is strictly positive definite on the subspace 
$$ \{a\in \R^\Xi \mid (\forall p\in \mathcal{P}_{\m-1})\, \sum_{\xi\in \Xi} a_{\xi} p(\xi)=0\}.$$
If $\m\le 0$, then $\phi$ is {\em  positive definite}.
A  function which is CPD and symmetric with respect to rotations  is called
a {\em radial basis function} (RBF).

 A  version of Bochner's theorem (\cite[Theorem 8.12]{Wend})  asserts that if $\phi$ is continuous and increases at most algebraically 
 (so that $|\phi(x)|\le C |x|^{\alpha}$ for some $\alpha \in \R$) 
 and if $\phi$ has a continuous generalized Fourier transform of order $\m$ which
 satisfies that $\widehat{\phi}>0$ on some open set, then $\phi$ is CPD of order $\m$. Although
more will come later, this assumption will be in place throughout the article. 
It is worth noting that both the order of the generalized Fourier transform 
and the order of conditional positive definiteness has the nesting property:
 if $\phi$ has order $\m$ then it has order $\m+1$.
 
 For a CPD function of order $\m$ there is an associated function space, 
called the {\em native space},
$\N(\phi)$
which consists of continuous functions.
One may find its construction in \cite{SchNative,Wend}.
The space has a semi-inner product 
$(f,g)\mapsto \langle f,g\rangle_{\N(\phi)}$
with nullspace $\mathcal{P}_{\m-1}$.
It is complete in the sense that the quotient
$\N(\phi)/\mathcal{P}_{\m-1}$ is a Hilbert space. We denote the induced seminorm
by $f\mapsto |f|_{\N(\phi)}$.

If  $\phi$ is positive definite (i.e., $\m\le 0$), the nullspace is trivial, and $\N(\phi)$ is a
Hilbert space.
In this case,  $f\mapsto | f|_{\N(\phi)}$ is a norm.

It is worth noting that the native space depends both on the function $\phi$ as well as the order $\m$;
 this is relevant because of the nesting property described above, so a given CPD function will generate infinitely many
native spaces 
(one for each order, although they only differ by the polynomial space $\mathcal{P}_{\m-1}$).

For any functional of the form 
$\sum_{\xi\in \Xi} a_\xi \delta_{\xi}$ 
supported on 
 $\Xi\subset \R^d$
which annihilates $\mathcal{P}_{\m-1}$ we have for  $f\in \N(\phi)$ that
%
\begin{equation}
\label{RK}
\sum_{\xi\in\Xi} a_\xi f(\xi) 
= 
\left\langle 
   f, \sum_{\xi\in\Xi} a_\xi \phi(\cdot -\xi)
\right\rangle_{\N(\phi)}.
\end{equation}
For  
$\Xi\subset \R^d$, 
we define the finite dimensional  space 
$$
V_\Xi(\phi) 
:= 
\bigl\{
  \sum_{\xi\in\Xi} a_\xi \phi(\cdot-\xi)
\mid 
  \sum_{\xi\in\Xi} a_\xi \delta_{\xi}\perp \mathcal{P}_{\m-1}
\bigr\}
+\mathcal{P}_{\m-1}.
$$
For any 
$\Xi \subset \R^d$, we have  $V_\Xi(\phi)\subset \N(\phi)$.

If 
$\Xi$ 
is {\em unisolvent} with respect to $\mathcal{P}_{\m-1}$ 
(meaning that if $p\in \mathcal{P}_{\m-1}$ vanishes
on 
$\Xi$, 
then it is identically zero), 
then the 
interpolation operator
$$I_\Xi:\N(\phi)\to V_\Xi (\phi) \quad \text{where} \quad (I_\Xi f)|_\Xi = f|_\Xi \text{ for all }f\in \N(\phi)$$
is well-defined. 
It is the orthogonal projector onto 
$V_\Xi(\phi)$
with respect to the $\N(\phi)$ semi-inner product.
Note that, like the native space, the interpolation operator depends
on the order $\m$ of conditional positive definiteness 
 as well as  on $\phi$ (as well as on 
 $\Xi$).

For a CPD function $\phi$ which has a continuous generalized Fourier transform of order $\m$, 
the native space can be expressed as the space of continuous functions $f$ which 
are tempered distributions, which have a generalized Fourier transform of order $\m/2$,
and for which $\int_{\R^d} |\widehat{f}(\omega)|^2/\widehat{\phi}(\omega) \dif \omega<\infty$.
 In this case, the formula 
\begin{equation}
\label{Fourier_NS}
\langle f,g\rangle_{\N(\phi)} 
= 
\int_{\R^d} 
   \widehat{ f}(\omega)\overline{\widehat{g}(\omega)} (\widehat{\phi}(\omega))^{-1} 
\dif \omega
 \end{equation} 
holds. See \cite{SchNative,Wend} for details.

%
%
%
%
%
\section{Higher order Bernstein inequalities}
\label{S:Bernstein}
Bernstein estimates for RBF approximation
have been demonstrated in \cite{NWW06},
and more recently  \cite{HNRW} for bounded regions.
The existing literature treats the case that the weaker norm is $L_2(\R^d)$.
In this section we present Bernstein inequalities where the weaker norm is the native space. 
These hold for RBFs which have the following property:
\begin{assumption}
\label{A:CPD}
We assume $\phi$ to be  an RBF
whose  generalized Fourier transform satisfies
\begin{eqnarray*}
C_1(1+|\omega|^2)^{-\tau}
&\le& 
\widehat{\phi}(\omega) 
\qquad 
\text{for all }\omega\in\R^d\setminus\{0\}\\
\widehat{\phi}(\omega)
&\le& 
C_2|\omega|^{-2\tau}
\qquad 
 \text{for all }|\omega|>r_0.
\end{eqnarray*}
 for some exponent
 $\tau>d/2$ and constants  $0<C_1\le C_2$,
\end{assumption}
This guarantees the continuous embedding $H^{\tau}\subset\N(\phi)$.
However,  it does not quite
imply $\N(\phi) \subset \dot{H}^{\tau}$, 
since $\widehat{\phi}$ may have a sharper singularity at $\omega=0$ than $\mathcal{O}(|\omega|^{-2\tau})$.

Under this assumption, \cite[Theorem 12.3]{Wend} applies (see also \cite{NW91}) , with  a $\tau$ dependent constant $C_0$:
\begin{equation}\label{cpd_eigenval}
\lambda_{\min} (\Phi_{\Xi})
:= 
\min_{\sum_{\xi\in\Xi}^N a_\xi \delta_{\xi} \perp \mathcal{P}_{\m-1}}  \sum_{\xi,\zeta\in\Xi}
        a_\xi a_\zeta \phi(\xi- \zeta) \ge C_0  C_1q^{2\tau-d} \|a\|_{\ell_2(\Xi)}^2,
\end{equation}
This can be used to prove a bandlimited approximation result as in
\cite[Lemma 3.3]{NWW06}.
To this end, for $\sigma>0$, define for a tempered distribution $u$, the function $u_{\sigma} := (\widehat{u} \chi_{B(0,\sigma)})^{\vee}$.
By the identity (\ref{Fourier_NS}), if $u\in \N(\phi)$, then $u_{\sigma}\in \N(\phi)$ as well.
\begin{lemma}\label{bandlimiting}
If $\phi$ satisfies Assumption \ref{A:CPD}, then there is $\kappa>0$ so that
for any finite set of points $\Xi \subset \R^d$, if $\sigma >\max(r_0,\kappa/q)$, then
$|u-u_{\sigma}|_{\N(\phi)} \le \frac12  |u|_{\N(\phi)}$ for all $u\in V_{\Xi}(\phi)$.
\end{lemma}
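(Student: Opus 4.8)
The plan is to pass to the Fourier side, where the native space seminorm diagonalizes, and to reduce the claim to a quadratic-form estimate in the coefficient vector $a$. Since $|\cdot|_{\N(\phi)}$ annihilates $\mathcal P_{\m-1}$ and depends only on the generalized Fourier transform away from the origin, I may assume $u=\sum_{\xi\in\Xi}a_\xi\phi(\cdot-\xi)$ with $\sum_{\xi}a_\xi\delta_\xi\perp\mathcal P_{\m-1}$. Writing $\ta(\omega):=\sum_{\xi\in\Xi}a_\xi e^{-i\xi\cdot\omega}$, one has $\widehat u(\omega)=\widehat\phi(\omega)\ta(\omega)$ for $\omega\neq0$, so $\widehat{u-u_\sigma}=\widehat u\,\chi_{\{|\omega|>\sigma\}}$ and (\ref{Fourier_NS}) gives
\[
|u|_{\N(\phi)}^2=\int_{\R^d}\widehat\phi(\omega)|\ta(\omega)|^2\dif\omega,\qquad
|u-u_\sigma|_{\N(\phi)}^2=\int_{|\omega|>\sigma}\widehat\phi(\omega)|\ta(\omega)|^2\dif\omega .
\]
Squaring the desired inequality, it suffices to show the high-frequency tail is at most $\tfrac14$ of the full integral.

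For the denominator, combining (\ref{RK}) with the eigenvalue bound (\ref{cpd_eigenval}) gives $|u|_{\N(\phi)}^2=\sum_{\xi,\zeta}a_\xi a_\zeta\phi(\xi-\zeta)\ge C_0C_1q^{2\tau-d}\|a\|_{\ell_2(\Xi)}^2$. For the numerator, since $\sigma>r_0$ the upper bound in Assumption \ref{A:CPD} holds throughout $\{|\omega|>\sigma\}$, so
\[
\int_{|\omega|>\sigma}\widehat\phi(\omega)|\ta(\omega)|^2\dif\omega
\le C_2\int_{|\omega|>\sigma}|\omega|^{-2\tau}|\ta(\omega)|^2\dif\omega .
\]
Everything thus reduces to the single estimate
\[
\int_{|\omega|>\sigma}|\omega|^{-2\tau}|\ta(\omega)|^2\dif\omega\le C\,\sigma^{d-2\tau}\|a\|_{\ell_2(\Xi)}^2,
\]
with $C$ independent of $\sigma$, $q$ and $a$.

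To prove this last estimate I would dominate $|\omega|^{-2\tau}\chi_{\{|\omega|>\sigma\}}$ by a smooth radial majorant. Fix once and for all a $C^\infty$ radial $\Theta\ge0$, bounded near the origin, with $\Theta(\eta)\ge|\eta|^{-2\tau}$ for $|\eta|\ge1$ and $\Theta(\eta)=|\eta|^{-2\tau}$ for $|\eta|\ge2$, and set $\Theta_\sigma(\omega):=\sigma^{-2\tau}\Theta(\omega/\sigma)$, so $\Theta_\sigma\ge|\omega|^{-2\tau}$ on $\{|\omega|>\sigma\}$. Because $2\tau>d$, every derivative of $\Theta$ lies in $L_1(\R^d)$, whence $\widehat\Theta$ decays faster than any polynomial. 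Expanding $|\ta|^2=\sum_{\xi,\zeta}a_\xi a_\zeta e^{-i(\xi-\zeta)\cdot\omega}$ and rescaling $\omega=\sigma\eta$, a Schur--Gershgorin bound yields
\[
\int_{|\omega|>\sigma}|\omega|^{-2\tau}|\ta|^2\dif\omega
\le\int_{\R^d}\Theta_\sigma|\ta|^2\dif\omega
\le C\sigma^{d-2\tau}\Bigl(\max_{\xi\in\Xi}\sum_{\zeta\in\Xi}|\widehat\Theta(\sigma(\xi-\zeta))|\Bigr)\|a\|_{\ell_2(\Xi)}^2 .
\]
Since $\Xi$ is $2q$-separated, the points $\sigma\zeta$ are $2\sigma q$-separated; using a decay order $N>d$ for $\widehat\Theta$ together with the standard count of separated points in dyadic annuli, the inner sum is bounded by $|\widehat\Theta(0)|+C(\sigma q)^{-N}$, which is $\le C$ once $\sigma q\ge1$. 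I expect this reduction estimate to be the main obstacle: the naive Schur bound with the \emph{sharp} cutoff fails, because the Fourier transform of $|\omega|^{-2\tau}\chi_{\{|\omega|>\sigma\}}$ inherits only the $|z|^{-(d+1)/2}$ decay of the surface measure on the truncation sphere, which is too slow to sum over a separated set in dimension $d\ge2$; replacing the sharp cutoff by the smooth majorant $\Theta_\sigma$ is exactly what restores summability.

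Combining the three displays gives
\[
\frac{|u-u_\sigma|_{\N(\phi)}^2}{|u|_{\N(\phi)}^2}\le\frac{C_2C}{C_0C_1}(\sigma q)^{d-2\tau},
\]
and since $d-2\tau<0$ the right-hand side is $\le\tfrac14$ as soon as $\sigma q\ge\kappa:=\bigl(4C_2C/(C_0C_1)\bigr)^{1/(2\tau-d)}$. Together with the requirement $\sigma>r_0$ used for the bound on $\widehat\phi$, this is precisely the hypothesis $\sigma>\max(r_0,\kappa/q)$, and taking square roots gives $|u-u_\sigma|_{\N(\phi)}\le\tfrac12|u|_{\N(\phi)}$, as claimed.
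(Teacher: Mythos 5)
Your proposal is correct, and it supplies in full the argument that the paper itself leaves to a citation: the paper's ``proof'' of Lemma \ref{bandlimiting} is a single sentence deferring to \cite[Lemma 3.3]{NWW06} ``with a simple modification to treat the requirement that $\sigma>r_0$.'' Your overall strategy is the same as that underlying reference: reduce to the Fourier side via (\ref{Fourier_NS}), bound the denominator from below by the minimal-eigenvalue estimate (\ref{cpd_eigenval}), use the upper bound of Assumption \ref{A:CPD} on $\{|\omega|>\sigma\}$ (which is exactly where the hypothesis $\sigma>r_0$ enters---your handling of this matches the ``simple modification'' the paper alludes to), and control the tail integral $\int_{|\omega|>\sigma}|\omega|^{-2\tau}|\ta(\omega)|^2\,\dif\omega$ by exploiting the $2q$-separation of $\Xi$. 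Where you genuinely differ is in the mechanism for that last estimate: the argument in \cite{NWW06} proceeds by covering the tail region by balls of radius comparable to $1/q$ and using, on each ball, a test function whose \emph{compact support} in physical space (inside the separation distance) makes the off-diagonal terms of the quadratic form vanish identically; you instead build the weight $|\omega|^{-2\tau}$ into a single global smooth majorant $\Theta_\sigma$ and make the off-diagonal terms \emph{small and summable} via the rapid decay of $\widehat\Theta$ and a Schur--Gershgorin bound. Your route avoids the annular/ball decomposition entirely and is arguably cleaner to write down; your parenthetical explaining why the sharp cutoff cannot be used directly (the truncation sphere only yields $|z|^{-(d+1)/2}$ decay) is also correct and is precisely the obstruction both approaches are designed to circumvent. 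Two minor points: the constant in your Schur bound requires $\sigma q\ge 1$, so the final choice should be $\kappa:=\max\bigl(1,(4C_2C/(C_0C_1))^{1/(2\tau-d)}\bigr)$, a harmless enlargement; and the identity $|u|_{\N(\phi)}^2=\int\widehat\phi\,|\ta|^2\,\dif\omega$ for $u\in V_\Xi(\phi)$ tacitly uses that $\ta$ vanishes to order $\m$ at the origin (from $\sum_\xi a_\xi\delta_\xi\perp\mathcal{P}_{\m-1}$) so that the integral converges near $\omega=0$---worth one sentence, though the paper takes the same fact for granted in the proof of Theorem \ref{cpd_euclidean}.
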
 
\begin{proof}
The proof follows that of \cite[Lemma 3.3]{NWW06}, with a simple modification to 
treat the requirement that $\sigma>r_0$.
\end{proof}

As in \cite[Theorem 5.1]{NWW06} this gives rise to a Bernstein estimate. 
In contrast to the result in \cite{NWW06},
 this uses  a higher order smoothness norm on the right hand side.

\begin{theorem}\label{cpd_euclidean}
 Suppose $\phi$ satisfies Assumption \ref{A:CPD}, and  that $0\le s<\tau-d/2$.
 Then there is a constant $C$ so that
 for any $\Xi \subset \R^d$ with separation radius $q<1$,
 $$
 |\opJ_s u |_{\N(\phi)} \le C q^{-s} |u|_{\N(\phi)}
 $$
holds for any 
$u\in V_\Xi(\phi)$.
 \end{theorem}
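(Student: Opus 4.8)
The plan is to prove the Bernstein estimate $|\opJ_s u|_{\N(\phi)} \le C q^{-s}|u|_{\N(\phi)}$ by combining the bandlimiting Lemma \ref{bandlimiting} with the frequency-support structure of elements of $V_\Xi(\phi)$. The key observation is that the Bessel potential $\opJ_s$ acts as a Fourier multiplier by $(1+|\omega|^2)^{s/2}$, which on frequencies $|\omega|<\sigma$ is bounded by roughly $\sigma^s \sim (\kappa/q)^s$. So if $u$ were genuinely bandlimited to $B(0,\sigma)$, the estimate would be immediate from the Fourier representation \eqref{Fourier_NS} of the native space inner product. The difficulty is that $u\in V_\Xi(\phi)$ is \emph{not} bandlimited, and its high-frequency tail must be controlled.

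First I would set $\sigma = \max(r_0,\kappa/q)$ (or a fixed multiple thereof, with $q<1$ ensuring $\sigma \sim 1/q$) and split $u = u_\sigma + (u - u_\sigma)$. For the bandlimited part, using \eqref{Fourier_NS} and the bound $(1+|\omega|^2)^{s/2}\le (1+\sigma^2)^{s/2}\le C\sigma^s \le Cq^{-s}$ on $\mathrm{supp}(\widehat{u_\sigma})\subset B(0,\sigma)$, I get
\begin{equation*}
|\opJ_s u_\sigma|_{\N(\phi)}^2 = \int_{|\omega|<\sigma} (1+|\omega|^2)^s \frac{|\widehat{u}(\omega)|^2}{\widehat{\phi}(\omega)}\dif\omega \le Cq^{-2s}|u|_{\N(\phi)}^2.
\end{equation*}
The main obstacle is the tail $u - u_\sigma$, whose frequencies exceed $\sigma$, so that the multiplier $(1+|\omega|^2)^{s/2}$ is unbounded there. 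The strategy is a bootstrap driven by Lemma \ref{bandlimiting}: that lemma gives $|u-u_\sigma|_{\N(\phi)}\le \tfrac12|u|_{\N(\phi)}$, but crucially $u_\sigma$ itself lies in $\N(\phi)$ and its high-frequency deficit again satisfies the same geometric decay. I would iterate by writing the full $\opJ_s u$ as a telescoping sum over dyadic frequency annuli $\sigma 2^{k}\le|\omega|<\sigma 2^{k+1}$, applying the multiplier bound $(1+|\omega|^2)^{s/2}\le C(\sigma 2^{k+1})^s$ on each annulus, and controlling the native-space mass on each annulus by the geometric factor $2^{-k}$ coming from repeated application of the bandlimiting lemma (since the restriction of a $V_\Xi(\phi)$-element to high frequencies has exponentially small native-space norm once $\sigma\gtrsim 1/q$).

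Concretely, the bandlimiting lemma says the native-space mass of $u$ above frequency $\sigma$ is at most a quarter of the total; applied to successively rescaled cutoffs, it yields that the mass above $\sigma 2^{k}$ decays geometrically in $k$. Summing $\sum_k (\sigma 2^{k})^{2s} 4^{-k}$ converges precisely when the growth $2^{2sk}$ is beaten by the decay, which is where the hypothesis $s<\tau-d/2$ enters: the eigenvalue bound \eqref{cpd_eigenval} forces $\widehat{\phi}(\omega)\gtrsim q^{2\tau-d}$ on the relevant scale, so the frequency content of $u$ cannot concentrate too heavily at high frequencies relative to its native-space norm, and the geometric decay rate strictly dominates the polynomial multiplier growth. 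The output is a convergent geometric series whose total is $Cq^{-2s}|u|_{\N(\phi)}^2$, giving the claim after taking square roots. I expect the delicate bookkeeping of the dyadic summation, and verifying that the decay rate from the bandlimiting lemma strictly beats $2^{2sk}$ exactly under $s<\tau-d/2$, to be the technical heart of the argument.
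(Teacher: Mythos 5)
There is a genuine gap at the heart of your argument: the claimed ``geometric decay across dyadic scales from repeated application of the bandlimiting lemma'' is not something Lemma \ref{bandlimiting} can deliver. That lemma applies only to elements of $V_\Xi(\phi)$, i.e.\ to genuine kernel sums $\sum_{\xi}c_\xi\phi(\cdot-\xi)$. The high-pass tails $u-u_{\sigma 2^k}$ are \emph{not} elements of $V_\Xi(\phi)$ (a frequency truncation of a kernel sum is no longer a kernel sum), so the lemma cannot be applied to them iteratively. The alternative of applying the lemma afresh to $u$ itself with the larger cutoff $\sigma 2^k$ is legitimate, but it returns the same fixed constant $\tfrac12$ for every $k$; nothing in the statement improves as the cutoff grows. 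Consequently your dyadic sum has annulus masses bounded only by $\tfrac14|u|^2_{\N(\phi)}$ uniformly in $k$, while the multiplier contributes $(\sigma 2^k)^{2s}$, and the series diverges. What your argument actually needs is a quantitative tail estimate of the form
\begin{equation*}
|u-u_R|^2_{\N(\phi)} \le C\,(Rq)^{d-2\tau}\,|u|^2_{\N(\phi)}
\qquad\text{for } R\ge \max(r_0,\kappa/q),
\end{equation*}
which does hold, but only by reopening the proof of the lemma (the eigenvalue bound \eqref{cpd_eigenval} combined with a packing argument over frequency annuli). With that strengthened bound your dyadic sum becomes $\sum_k (\sigma 2^k)^{2s}\,2^{k(d-2\tau)}$, which converges exactly when $s<\tau-d/2$ --- so your overall strategy is salvageable, but the step you defer as ``delicate bookkeeping'' is precisely the part that does not follow from the lemma as stated.

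For contrast, the paper avoids the tail problem entirely by changing the kernel rather than decomposing the function: setting $\psi_{\tau-s}:=\opJ_{2s}\phi$, so that $\widehat{\psi_{\tau-s}}(\omega)=(1+|\omega|^2)^s\widehat{\phi}(\omega)$, one has the identity $|\opJ_s u|_{\N(\phi)}=|\tilde u|_{\N(\psi_{\tau-s})}$ where $\tilde u=\sum_\xi c_\xi\psi_{\tau-s}(\cdot-\xi)$ carries the \emph{same} coefficients as $u$. The point is that $\tilde u$ \emph{is} an element of $V_\Xi(\psi_{\tau-s})$, and $\psi_{\tau-s}$ satisfies Assumption \ref{A:CPD} with exponent $\tau-s>d/2$ (this is where the hypothesis $s<\tau-d/2$ enters --- not through any competition between multiplier growth and tail decay). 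A single application of Lemma \ref{bandlimiting} in $\N(\psi_{\tau-s})$ then reduces everything to the bandlimited piece, which is estimated by the multiplier bound exactly as in your first display. You may want to either adopt this kernel-transfer device or prove the quantitative tail bound above; as written, your proof does not close.
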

Note that 
every  polynomial space  $\mathcal{P}_{m}$ is invariant under $\opJ_s$.
Since $ \mathcal{P}_{\m-1}$ is the nullspace of the native
space semi-norm, we have $|\opJ_s(u+p)|_{\N(\phi)} = |\opJ_s u|_{\N(\phi)}$. 
Thus for $\phi$ satisfying
Assumption 1, 
$|\opJ_s u |_{\N(\phi) }$ can be calculated, via (\ref{Fourier_NS}), as an integral on the Fourier domain.
 \begin{proof}
 The  function 
 $\psi_{\tau-s}:=\opJ_{2s} \phi $
  satisfies Assumption \ref{A:CPD}, with $\tau-s$ in place of $\tau$,
 as can be observed from its  
 generalized Fourier transform 
 $\widehat{\psi_{\tau-s} }(\omega) = (1+|\omega|^2)^{s}\widehat{\phi}(\omega)$.
 An application of (\ref{Fourier_NS}) gives
 $|\opJ_s u|_{\N(\phi)}^2=\int_{\R^d}
  |\sum_{\xi\in\Xi} c_\xi e^{i\langle \omega,\xi\rangle}|^2 (1+|\omega|^2)^{s} \widehat{\phi}(\omega)
d\omega$
 for any $u\in V_\Xi(\phi)$,
 which provides the identity
\begin{eqnarray*}
\left|\opJ_s u
\right|_{\N(\phi) }^2
& =&
\left|
\sum_{\xi\in\Xi} c_{\xi} \psi_{\tau-s}(\cdot -\xi)
\right|_{\N(\psi_{\tau-s})}^2 
= 
|\tilde{u}|_{\N(\psi_{\tau-s} )}^2,
\end{eqnarray*} 
where we define 
$\tilde{u} := \sum_{\xi\in \Xi} c_\xi \psi_{\tau-s}(\cdot -\xi)$.
Let $\sigma = 2\max(\kappa /q, r_0)$. 
Then Lemma \ref{bandlimiting} 
guarantees that 
$
|\tilde{u}|_{ \N(\psi_{\tau-s} ) }^2
 \le 
 2 |(\tilde{u})_{\sigma} |_{ \N(\psi_{\tau-s} )}^2.
$
Finally, we have
\begin{eqnarray*}
 |
 \left(\tilde{u} \right)_{\sigma}
 |_{\N(\psi_{\tau-s} ) }^2 
 &=& 
 \int_{|\omega|<\sigma} 
 \Bigl|\sum_{\xi\in\Xi} c_\xi e^{i\langle \omega,\xi \rangle}\Bigr|^2\widehat{\phi}(\omega)(1+|\omega|^2)^{s}\, \dif \omega \\
 &\le& 
 \frac{1+4\kappa^2}{4\kappa^2}
 \sigma^{2s} 
 \int_{|\omega|<\sigma} 
 \Bigl|\sum_{\xi\in\Xi} c_\xi e^{i\langle \omega,\xi\rangle}\Bigr|^2 \widehat{\phi}(\omega)
 \dif \omega
\le 
C q^{-2s}
 \left| u \right|_{\N(\phi)}^2, 
\end{eqnarray*}
%
%
%
since  $q<1$.
In the first inequality, we have used the fact that $\sigma$ is bounded
below by $\sigma\ge 2 \kappa>0$,
so $1+\sigma^2 \le   \left(\frac{1+4\kappa^2}{4\kappa^2}\right)\sigma^2$. %
The second inequality follows automatically if $\sigma= 2\kappa/q$;
if  $r_0>\kappa/q$, then the result holds with a slightly larger, $r_0$ dependent constant, 
because 
$\sigma^{2s} \le (2r_0)^{2s} \le
 (2r_0)^{2s} q^{-2s}
 $.
\end{proof}

This applies to a number of prominent RBF families.
\begin{example}
\label{TPS_1}
The surface spline $\phi_m$, the fundamental solution to $\Delta^m$ on $\R^d$
 for $m>d/2$,
is CPD of order $\lfloor m-d/2\rfloor +1$, and has generalized
Fourier transform $\widehat{\phi_m}(\omega)=|\omega|^{-2m}$
of order $\lfloor m-d/2\rfloor +1$. 
It is, however, customary to consider $\phi_m$ as CPD of order $\m=m$,
in which case \cite[Theorem 10.43]{Wend}  shows that the native spaces is the Beppo-Levi space
$$\mathrm{BL}_m(\R^d)=
\{f\in L_{2,loc}(\R^d)\mid (\forall |\alpha|=m)\ D^{\alpha}f\in L_2(\R^d)\}.
$$ 
The  seminorm for this space is 
$ |f |_{\N(\phi_m)} = |f|_{W_2^{m}(\R^d)}$.
%
Then Theorem \ref{cpd_euclidean} states that
for $u\in V_{\Xi}(\phi_m)$
and  $s<m-d/2$, we have
 $|\opJ_s u|_{\dot{H}^{m}} \le C q^{-s} |u|_{W_2^m(\R^d)}$. 
\end{example}
\begin{example}
\label{Matern_example_1}
The Mat{\'e}rn kernels $\Mat_{\tau}$,  $\tau>d/2$,
known also as Bessel potential kernels,
are
 the fundamental solutions to the (possibly) fractional operator
$(1-\Delta)^{\tau}$ on $\R^d$.
They
are  strictly positive definite, with native space $\N(\Mat_{\tau}) =H^{\tau}$. 
For any
$u\in V_{\Xi}(\Mat_{\tau})$
we have $\|u\|_{H^{\tau+s}} \le C q^{-s} \|u\|_{H^{\tau}}$
as long as $s<\tau-d/2$.
These 
RBFs are discussed further
in Example \ref{Matern_example_2}.
\end{example}
\begin{example}
\label{Wendland_example_1}
Various compactly supported RBFs, 
including Wendland's compactly supported RBFs of minimal degree, denoted $\phi_{k,d}$ (where 
$k$ is a parameter derived from its construction, but related to its smoothness) satisfy Assumption 1.
Each kernel $\phi_{k,d}$ is  strictly positive definite, and has  native space $\N_{\phi_{k,d}} = H^{k+(d+1)/2}(\R^d)$.
Theorem \ref{cpd_euclidean} states that for any
$u\in V_{\Xi}(\phi_{k,d})$
we have 
$$\|u\|_{H^{k+(d+1)/2+s}(\R^d)} \le C q^{-s} \|u\|_{H^{k+(d+1)/2}(\R^d)}$$
as long as 
$s< k+1/2 -d/2$.
These are discussed again in Example \ref{Wendland_example_2} in the next section.
\end{example}

%
%
%
%
%
%
\section{RBF Approximation with Sobolev norms}
\label{S:Approx}
We now give Jackson estimates for the spaces $V_{\Xi}(\phi)$ using the norm 
$\| \cdot \|_{H^{m+n}(\R^d)}$, with $0\le n<m-d/2$. 
Our first result involves a version of the approximation scheme developed
in \cite{devron} for RBFs which are fundamental solutions to differential operators.
This scheme was used to get 
approximation results with error measured in $L_p(\R^d)$; 
we expand this slightly to error in Sobolev norms, and for RBFs satisfying a more general set of conditions.
Specifically, we show that it  provides strong results for target functions 
$u\in H^{2m}$
having deconvolution $(\widehat{u}/\widehat{\phi})^{\vee}$ supported in $\Omega$.

For this, we make a basic assumption about the radially symmetric function $\phi$. Namely, 
that it is a smooth perturbation of a type of (essentially) homogeneous function.
 To make this definition we introduce the function $\Hom_s$ for $s\ge 0$ as 
$$\Hom_s(x)=\begin{cases} |x|^s& s\notin 2\Nat\\ |x|^s\log|x| & s\in 2\Nat.\end{cases}$$
By \cite[(3.1)]{HL}, it follows that
\begin{equation}\label{hom_deriv}
D^{\alpha} \Hom_s(x) 
= 
p_{s-|\alpha|}(x)\log(x) +q_{s-|\alpha|}(x)
\end{equation} 
with 
$q_{s-|\alpha|}$ a homogeneous, rational function of degree ${s-|\alpha|}$,
and
$p_{s-|\alpha|}$ a homogeneous polynomial of degree $s-|\alpha|$, which is zero when $s\notin 2\Z$ or when $s-|\alpha|<0$.

\begin{assumption}\label{Assumption2}
Suppose $s$ and $L$ are positive, with $s>d/2$ and $L>s+d$.
We assume $\phi \in  C(\R^d)\cap C^{s+d-1}(\R^d\setminus\{0\})$ is  radially symmetric, and
 there is a constant $r_0>0$  so that the following two conditions hold
 \begin{enumerate}
 \item there is a constant $C$ so that  for all multi-indices $|\beta|=L$ and $|x|>r_0$,
$$
|D^{\beta} \phi(x)| 
\le 
C |x|^{s-|\beta|}$$
\item there exist  functions $u,v\in C^L(\overline{B(0,r_0)})$ so that for $|x|<r_0$
$$
\phi(x) 
= 
u(x) + \Hom_s(x)v(x).
$$
\end{enumerate}
\end{assumption}
This assumption guarantees that 
$\phi$ is a smooth perturbation of $\Hom_s$, which has distributional
Fourier transform $\widehat{\Hom}_s(\xi) \propto |\xi|^{-s-d}$ on $\R^d\setminus\{0\}$.
Although neither Assumption 1 nor 2 implies the other, if $\phi$ is to satisfy both simultaneously,
it must follow that  $s=2\tau-d$.
\begin{example}\label{ss_example_2}
The  family of surface splines given in Example \ref{TPS_1}
are defined  for $m>d/2$ as 
$\phi_m(x) = C_{m,d} \Hom_{2m-d}(x)$. 
Thus, they  satisfy Assumption 2 with $s=2m-d$. 
Item 1 follows from (\ref{hom_deriv}) and the remark following it,
while item 2 follows with $u=0$ and constant $v$.
\end{example}
\begin{example} \label{Matern_example_2}
The Mat{\'e}rn kernels 
$\Mat_{\tau}(x) = |x|^{\tau-d/2} K_{\tau-d/2}(|x|)$ satisfy Assumption 2 with $s=2\tau-d$.
Here  $K_{\mu}$ is a  modified Bessel function (\cite[10.25]{DLMF}).
Each $\Mat_{\tau}$ is in $C^{\infty}(\R^d\setminus \{0\})$ and 
satisfies the decay condition 
$|D^{\alpha}\Mat_{\tau}(x) |\le C_M |x|^{-M}$ for all $M$ and all $\alpha$. 
Furthermore, 
item 2 holds by using the convergent power series expansion 
$$
\Mat_{\tau}(x) 
= 
\sum_{j=0}^{\infty}a_j |x|^{2j} + \Hom_{2\tau-d}(|x|) \sum_{j=0}^{\infty} b_j |x|^{2j}
$$
which is valid
for all $\mu>d/2$.
 When $\mu-d/2 \in \Nat$, this is given in both \cite[10.31.1]{DLMF} and \cite[9.6.11]{AS}.
When $\mu-d/2$ is fractional, it follows from either 
\cite[10.27.4/10.25.2]{DLMF}  or \cite[9.6.2/9.6.10]{AS}.
\end{example}
\begin{example} \label{Wendland_example_2}
The compactly supported Wendland kernels of minimal degree $\phi_{k,d}$, 
described in \cite[Chapter 9]{Wend}  satisfy  Assumption 2 only  in dimension
$d=2$.  
Indeed, for $d\in\Nat$, $\phi_{k,d}\in C^{2k+\lfloor d/2\rfloor +1}(\R^d\setminus\{0\})$,
so when $d=2$, 
$\phi_{k,2}\in C^{2k+2}(\R^2\setminus\{0\})$. In this case, $s=2k+1$ and $r_0=1$.
 Item 1 holds because $\mathrm{supp}(\phi_{k,d})=B(0,1)$. 
 
The fact that  item 2 holds follows  from \cite[Theorem 9.12]{Wend}. 
Specifically, $\phi_{k,2}(x) =p(|x|)$ for a polynomial $p(r)=\sum_{j=0}^{3k+2}a_j r^j$
 whose first $k$ odd coefficients are zero. I.e., $a_{2k+1}$ is the first nonzero  coefficient of an odd power.
 
 There do exist a number of compactly supported RBFs which satisfy Assumptions 1 and  2, however (the {\em generalized
 Wendland functions} studied in \cite{CH}).
 These are discussed in section \ref{S:compact_support}.
\end{example}
Our interest is to approximate functions $f$ having the form $f=\phi*\nu+p$, for $\nu\in L_2(\R^d)$, 
with $\mathrm{supp}(\nu)$
contained in a compact set $\Omega$, and $p\in\mathcal{P}_{\m-1}$.
We note that  $\phi$ is sufficiently smooth to allow differentiation under the integral sign:
$$
D^{\alpha} \int_{\Omega} \nu(z)\phi(x-z)\dif z
=
\int_{\Omega} \nu(z) D^{\alpha} \phi(x-z)\dif z
$$
whenever $|\alpha|< s+d$, by compactness of $\Omega$, integrability of $\nu$, and continuity of $D^{\alpha}\phi$.

\subsection{Approximation scheme}
We consider an approximation scheme similar to the on presented in  \cite{devron}. For this, we
consider a compact set $\Omega\subset \R^d$, a finite subset
$\Xi\subset \Omega$, and a sufficiently regular {\em local polynomial reproduction}. 
The latter is a map $a(\cdot,\cdot): \Xi \times \Omega \to \R$ which satisfies the following conditions:
\begin{itemize}
\item for every $z\in \Omega$ if $\dist(\xi,z)>Kh$ then $a(\xi,z)=0$
\item for every $z\in \Omega$, $\sum_{\xi\in \Xi} \bigl| a(\xi,z) \bigr|\le \Gamma$
\item for every $p\in \mathcal{P}_{L}$ and $z\in \Omega$, $\sum_{\xi\in \Xi}a(\xi,z) p(\xi) =p(z)$
\item  for every $\xi\in \Xi$, $a(\xi,\cdot)$ is measurable.
\end{itemize}
If $\Omega$ satisfies an interior cone condition and $h$ is sufficiently small,
then \cite[Theorem 3.14]{Wend} guarantees existence of a local polynomial reproduction which
has the first  three of these four properties. 
In the appendix, we present the modification to  \cite[Theorem 3.14]{Wend}  which is needed  to get the fourth condition
 (actually, we show that each $a(\xi,\cdot)$ can be chosen to be infinitely smooth).

For any function $f$ which can be decomposed as  $f=\phi*\nu +p$, with $\nu\in L_2(\R^d)$ 
having support in $\Omega$,  and $p\in \mathcal{P}$,
we define the approximation scheme $T_\Xi$ as
$$T_{\Xi}f(x) := \sum_{\xi \in \Xi} \left(\int_{\Omega} a(\xi,z) \nu(z)\dif z\right)\phi(x-\xi) + p(x).$$
\begin{remark}
\label{annihilation_remark}
 If $L\ge m$ and  $\nu\perp \mathcal{P}_{m}$, then the coefficients 
 $A_{\xi} =  
 \int_{\Omega} a(\xi,z) \nu(z)\dif z$
 satisfy 
 $$\sum_{\xi\in\Xi} A_{\xi} p(\xi) 
 = 
  \int_{\Omega}\nu(z) \sum_{\xi\in\Xi} a(\xi,z)p(\xi) \dif z
= \int_{\Omega} \nu(z) p(z) \dif z = 0$$ 
 for any $p\in \mathcal{P}_{m}$.
 In particular, if $\phi$ is CPD of order $\m$ and
 $f=\nu*\phi+p$ with
 $\nu\perp \mathcal{P}_{\m-1}$ and $p\in \mathcal{P}_{\m-1}$, then
  we have $T_{\Xi} f\in V_{\Xi}(\phi)$.
 \end{remark}

\subsection{Approximation error}

In order to calculate the error $\|D^{\alpha} f - D^{\alpha} T_{\Xi}f\|_{L_2(\R^d)}$, 
we introduce, for each multiindex
 $\alpha$ with $|\alpha| < s+d$,
 the error kernel $E^{(\alpha)}:\R^d\times \Omega \to \R$, where
$$
E^{(\alpha)}(x,z):= \left|D^{\alpha}\phi (x - z)  -   \sum_{\xi\in \Xi}  a(\xi, z) D^{\alpha}\phi (x - \xi) \right|.
$$
To analyze the error kernel, we make use of polynomial reproduction 
in the following way: 
\begin{lemma}
\label{Taylor_Lemma}
Suppose that $w\subset \R^d$, $W$ is a neighborhood of $w$,
 $\tilde{X}\subset W$ is a finite set, 
 and 
 $\ta \in \R^{\tilde{X}}$
 satisfies $\sum_{\zeta \in \tilde{X}} \ta_{\zeta} p(\zeta) =p(w)$
 for all $p\in \mathcal{P}_{L}$, along 
 with $|\zeta-w|>r \Rightarrow \ta_{\zeta}=0$.
If $U$ is $M$-times continuously differentiable in a neighborhood of
$\overline{B(w,r)}$
with $M\le L+1$, 
 then   we have
 \begin{equation}
\label{functional_bound}
\Bigl|
	U(w) - \sum_{\zeta \in \tilde{X}} \ta_{\zeta} U(\zeta)
\Bigr|
\le   
\frac{\| \ta \|_{\ell_1(\tilde{X})}}{M!} 
 r^{M} \max_{|\beta|=M} \|D^{\beta} U\|_{L_{\infty}(B(w,r))}.
\end{equation}
\end{lemma}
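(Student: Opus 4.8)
The plan is to combine a Taylor expansion of $U$ at the center $w$ with the polynomial reproduction hypothesis. Since $M\le L+1$, the Taylor polynomial of $U$ about $w$ of degree $M-1$,
$$T(\zeta) := \sum_{|\gamma|\le M-1}\frac{D^\gamma U(w)}{\gamma!}(\zeta-w)^\gamma,$$
lies in $\mathcal{P}_{M-1}\subseteq\mathcal{P}_{L}$, so the reproduction property $\sum_{\zeta\in\tilde X}\ta_\zeta p(\zeta)=p(w)$ applies to it. Writing $U=T+R$ with $R$ the Taylor remainder and noting $T(w)=U(w)$, reproduction gives $\sum_{\zeta\in\tilde X}\ta_\zeta T(\zeta)=T(w)=U(w)$, hence
$$U(w)-\sum_{\zeta\in\tilde X}\ta_\zeta U(\zeta) = \sum_{\zeta\in\tilde X}\ta_\zeta\bigl(T(\zeta)-U(\zeta)\bigr) = -\sum_{\zeta\in\tilde X}\ta_\zeta R(\zeta).$$
Thus the full error is expressed through the remainder evaluated at the nodes, and the polynomial term has been eliminated exactly.

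Next I would pass to the estimate by the triangle inequality, $\bigl|U(w)-\sum_{\zeta\in\tilde X}\ta_\zeta U(\zeta)\bigr|\le \|\ta\|_{\ell_1(\tilde X)}\max_{\zeta}|R(\zeta)|$, where, crucially, the support condition $|\zeta-w|>r\Rightarrow\ta_\zeta=0$ restricts the maximum to nodes with $|\zeta-w|\le r$. For each such $\zeta$ the whole segment from $w$ to $\zeta$ lies in $\overline{B(w,r)}$, on which $U$ is $M$-times continuously differentiable by hypothesis, so the order-$M$ derivatives needed to control $R$ are available and bounded by $\max_{|\beta|=M}\|D^\beta U\|_{L_\infty(B(w,r))}$.

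The key computational step is therefore the bound on $R(\zeta)$ for $|\zeta-w|\le r$. I would use the multivariate integral form of the remainder, $R(\zeta)=M\sum_{|\gamma|=M}\frac{(\zeta-w)^\gamma}{\gamma!}\int_0^1(1-t)^{M-1}D^\gamma U\bigl(w+t(\zeta-w)\bigr)\,\dif t$, equivalently obtained by applying one–dimensional Taylor to $g(t):=U(w+t(\zeta-w))$ and recognizing $g^{(M)}(\theta)=\bigl((\zeta-w)\cdot\nabla\bigr)^M U$. Carrying out the integral over $t$ yields the pointwise bound $|R(\zeta)|\le \frac{1}{M!}\bigl(\sum_{|\gamma|=M}\tfrac{M!}{\gamma!}|(\zeta-w)^\gamma|\bigr)\max_{|\beta|=M}\|D^\beta U\|_{L_\infty(B(w,r))}$, and then $|\zeta-w|\le r$ gives the factor $r^{M}/M!$. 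I expect this remainder estimate to be the main obstacle: the $M$-th directional derivative must be collapsed into the single quantity $\max_{|\beta|=M}\|D^\beta U\|_{L_\infty}$, and the multinomial sum $\sum_{|\gamma|=M}\frac{|(\zeta-w)^\gamma|}{\gamma!}$ naturally produces $\|\zeta-w\|_{1}^{M}/M!$ rather than $|\zeta-w|^{M}/M!$, so the delicate point is the sharp passage from the $\ell_1$-length to the Euclidean radius $r$ (achieving the stated constant $1/M!$ without carrying an extra dimension-dependent factor). Once the remainder is bounded on $B(w,r)$, the remaining steps are routine.
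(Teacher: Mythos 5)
Your argument is the paper's argument: expand $U=P+R$ about $w$ with $P$ the degree-$(M-1)$ Taylor polynomial, observe that $M-1\le L$ so the reproduction property eliminates $P$ exactly, and bound the error by $\|\ta\|_{\ell_1(\tilde X)}\max_{|\zeta-w|\le r}|R(\zeta)|$; the paper's proof consists of precisely these steps, stated more tersely (it does not even spell out the reproduction step that you make explicit).

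The obstacle you flag at the end is genuine, but it is a defect of the lemma's stated constant rather than of your proof, and the paper does not overcome it either: its proof simply asserts the remainder bound $|R(\zeta)|\le\frac{1}{M!}\,|\zeta-w|^{M}\max_{|\beta|=M}\|D^{\beta}U\|_{L_{\infty}(B(w,r))}$ with the Euclidean norm. As you note, the Lagrange or integral form of the remainder yields, via the multinomial theorem, the bound $\frac{1}{M!}\,\|\zeta-w\|_{\ell_1}^{M}\max_{|\beta|=M}\|D^{\beta}U\|_{L_{\infty}}$, and the $\ell_1$ norm cannot be exchanged for the Euclidean one at no cost. Indeed, (\ref{functional_bound}) as literally stated fails for $d\ge 2$: take $L=0$, $M=1$, $U(x)=x_1+\dots+x_d$, $w=0$, and a single node $\zeta_0=r(1,\dots,1)/\sqrt{d}$ with weight $\ta_{\zeta_0}=1$ (this reproduces constants and satisfies the support condition); the left-hand side equals $r\sqrt{d}$ while the right-hand side equals $r$. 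So the correct constant carries a factor $d^{M/2}$ (equivalently, one may read $r$ as an $\ell_1$-radius). This discrepancy is immaterial downstream: both invocations of Lemma \ref{Taylor_Lemma}, in the proofs of Lemmas \ref{far_field} and \ref{near_field}, absorb everything into a generic constant $C$. Your proof, which honestly produces the $d^{M/2}$ factor, is therefore fully sufficient for every use made of the lemma, and is more careful than the paper's own proof on exactly this point.
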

 \begin{proof}
We can express $U(\zeta) =P(\zeta) +R(\zeta)$, with  $P$ the Taylor polynomial 
of degree $M-1$ centered at $w$. Thus,
$
P =
\sum_{|\beta| < M} \frac{1}{\beta!}D^{\beta}U(w) (\cdot-w)^{\beta}
$.
For $\zeta\in B(w,r)$, the remainder satisfies
$$
 |R(\zeta) | 
 \le 
 \frac{1}{M!} |\zeta-w|^{M} 
 \max_{|\beta|=M} \|D^{\beta} U\|_{L_{\infty}(B(w,r))}.
 $$
Then 
 $
 |U(w) - \sum_{\zeta\in \tilde{X}} \ta_\zeta U(\zeta)|
 \le 
 \|\ta\|_{\ell_1} \max_{|\zeta-w|\le r} |R(\zeta)|
 $, 
 and the result follows.
 \end{proof}
\begin{lemma}
\label{far_field}
Suppose $\phi$ satisfies Assumption 2.
Then the error kernel
satisfies, for $|x-z|>2Kh$,
the estimate
 $$E^{(\alpha)}(x,z)\le
  \begin{cases} 
   C h^{s-|\alpha|}\left(\frac{|x-z|}{h}\right)^{s-L} 
   & |x-z|\notin [r_0-Kh ,  r_0+Kh],\\
   Ch^{s+d-1-|\alpha|} 
   &r_0-Kh\le |x-z|\le r_0+Kh.
 \end{cases}
 $$
 \end{lemma}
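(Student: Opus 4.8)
The goal is to bound the error kernel $E^{(\alpha)}(x,z)$ in the far field $|x-z|>2Kh$, separately in the generic regime and in the annulus near the singularity-transition radius $r_0$. The natural tool is Lemma \ref{Taylor_Lemma}: for fixed $x$, I set $U(\cdot) = D^{\alpha}\phi(x-\cdot)$, take $w=z$, let $\tilde X = \Xi$, and let $\ta_{\xi} = a(\xi,z)$. The local polynomial reproduction properties guarantee $\sum_{\xi} a(\xi,z) p(\xi) = p(z)$ for $p\in\mathcal{P}_L$, that $a(\xi,z)=0$ when $\dist(\xi,z)>Kh$, and that $\|a(\cdot,z)\|_{\ell_1}\le \Gamma$. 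So the hypotheses of Lemma \ref{Taylor_Lemma} are met with $r=Kh$ and $M=L$ (or $M=L+1$ if the smoothness permits), giving
\begin{equation*}
E^{(\alpha)}(x,z) \le \frac{\Gamma}{M!}(Kh)^{M}\max_{|\beta|=M}\|D^{\beta}U\|_{L_\infty(B(z,Kh))}.
\end{equation*}
The key point is that $D^{\beta}U = \pm D^{\alpha+\beta}\phi(x-\cdot)$, so the supremum involves derivatives of $\phi$ of total order $|\alpha|+M$ evaluated on the ball $B(z,Kh)$; since $|x-z|>2Kh$, every point $y$ in that ball satisfies $|x-y|\ge |x-z|-Kh > |x-z|/2$, keeping us away from the origin.

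\textbf{The generic regime.} When $|x-z|\notin[r_0-Kh,\,r_0+Kh]$, the whole ball $B(z,Kh)$ lies either inside $\{|y|>r_0\}$ or inside $\{|y|<r_0\}$ (after the substitution $y\mapsto x-y$), so a single one of the two conditions in Assumption 2 applies uniformly on the ball. In the outer case I would apply item (1) with $|\beta|=L$ to get $|D^{\gamma}\phi|\le C|x-y|^{s-L}$ for $|\gamma|=|\alpha|+L$; combined with $|x-y|\sim|x-z|$ this yields the bound $Ch^{L}\,|x-z|^{s-|\alpha|-L}$, which I rewrite as $Ch^{s-|\alpha|}(|x-z|/h)^{s-L}$, the claimed first case. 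In the inner case the expansion $\phi = u + \Hom_s\cdot v$ with $u,v\in C^L$ lets me estimate derivatives of order $|\alpha|+L$ of $\phi$ using \eqref{hom_deriv}; the derivatives of the homogeneous-times-log part of total order $|\alpha|+L$ scale like $|x-y|^{s-|\alpha|-L}$ (the log factor is harmless since the argument is bounded away from both $0$ and $\infty$ on the relevant range), producing the same bound. I should be slightly careful that $L>s+d$ forces $s-L<0$, so the factor $(|x-z|/h)^{s-L}$ is genuinely decaying.

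\textbf{The annular regime.} When $r_0-Kh\le|x-z|\le r_0+Kh$, the ball $B(z,Kh)$ straddles the sphere $|y|=r_0$, across which $\phi$ is only guaranteed to be $C^{s+d-1}$ rather than $C^L$. Here I cannot take $M=L$ derivatives, so I lower the order in Lemma \ref{Taylor_Lemma} to $M=s+d-1-|\alpha|$ (the largest value keeping $|\alpha|+M\le s+d-1$, within the global smoothness $C^{s+d-1}(\R^d\setminus\{0\})$), which gives $E^{(\alpha)}(x,z)\le C h^{M}\max_{|\beta|=M}\|D^{\alpha+\beta}\phi(x-\cdot)\|_{L_\infty}$. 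Since $|x-z|\approx r_0$ is a fixed positive constant, the derivative factor is $\mathcal{O}(1)$ and the estimate collapses to $Ch^{s+d-1-|\alpha|}$, exactly the second case. This annular case is the main obstacle: I expect the delicate part is verifying that $\phi$ really is $C^{s+d-1}$ up to and across $r_0$ (so that Lemma \ref{Taylor_Lemma} applies with $M=s+d-1-|\alpha|$ on a ball touching the sphere), and confirming that $s+d-1-|\alpha|\ge 0$ under the standing constraint $|\alpha|<s+d$, so that the exponent of $h$ is nonnegative and the bound is meaningful.
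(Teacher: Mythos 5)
Your overall strategy --- applying Lemma \ref{Taylor_Lemma} to $U=D^{\alpha}\phi(x-\cdot)$ with weights $\ta_{\xi}=a(\xi,z)$, $r=Kh$, and splitting into a generic regime and the annulus around $r_0$ --- is the same as the paper's, and your treatment of the annular regime (with $M=s+d-1-|\alpha|$) is correct; note that the $C^{s+d-1}$ regularity across the sphere $|y|=r_0$ that you worry about is not something to verify, it is exactly what Assumption 2 asserts. However, there is a genuine gap in the generic regime: you take $M=L$, so the Taylor remainder involves derivatives of $\phi$ of total order $|\alpha|+L$. Assumption 2 provides neither the existence of nor any bound for such derivatives: item 1 controls $|D^{\beta}\phi|$ only for $|\beta|=L$ exactly (and presupposes only order-$L$ differentiability outside $B(0,r_0)$), and in the inner region the factors $u,v$ are only in $C^{L}$, so derivatives of $u$ and Leibniz expansions of $\Hom_s v$ of order $|\alpha|+L$ need not even exist. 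Your step ``apply item (1) with $|\beta|=L$ to get $|D^{\gamma}\phi|\le C|x-y|^{s-L}$ for $|\gamma|=|\alpha|+L$'' silently transfers the order-$L$ bound to higher-order derivatives, which is unjustified; symptomatically, the bound you arrive at, $Ch^{L}|x-z|^{s-|\alpha|-L}$, is strictly stronger than the claimed $Ch^{L-|\alpha|}|x-z|^{s-L}=Ch^{s-|\alpha|}\left(\frac{|x-z|}{h}\right)^{s-L}$, a sign that information beyond the hypotheses was consumed.

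The repair is the choice made in the paper: take $M=L-|\alpha|$ (legitimate since $|\alpha|<s+d<L$), so that $D^{\beta}U$ with $|\beta|=M$ involves derivatives of $\phi$ of total order exactly $L$. Then item 1 applies verbatim in the outer region $|x-z|>r_0+Kh$, and in the inner region $2Kh<|x-z|<r_0-Kh$ the $C^{L}$ regularity of $u$ and $v$ together with (\ref{hom_deriv}) suffices; the bookkeeping $h^{M}|x-z|^{s-L}=h^{L-|\alpha|}|x-z|^{s-L}$ gives precisely the stated estimate. One further detail your sketch omits in the inner region: the smooth part $u$ only yields the bound $Ch^{L-|\alpha|}$, with no factor $|x-z|^{s-L}$; one must invoke $|x-z|<r_0$ and $s-L<0$ to absorb this into the claimed form, as is done in (\ref{analytic_part}).
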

  \begin{proof}
 We split this into three cases according to the  size of $|x-z|$.
Case 1 treats the punctured space $|x-z|>r_0+Kh$,
 Case 2 treats the annulus
  $r_0 -Kh\le |x-z|\le r_0+Kh$,
 and  Case 3 treats the inner annulus  $2Kh\le |x-z|<r_0+Kh$.
 
 In each case, we use Lemma \ref{Taylor_Lemma} 
 applied to $U= D^{\alpha}\phi(x-\cdot)$ at the point $w= z$
 in $W=\Omega$
 using the point set $\tilde{X}=\Xi$ 
 and the vector
 $\ta \in\R^{\tilde{X}}$
defined by
 $\ta_{\zeta} = a(\zeta,z)$.
 By local polynomial reproduction, 
  the hypotheses of Lemma \ref{Taylor_Lemma}  hold with $r=Kh$ and $\|\ta\|_{\ell_1(\tilde{X})}=\Gamma$.
 
 The only difference between the cases lies in the smoothness $M$ enjoyed by $U$.
 
 \smallskip

\noindent{\em Case 1:} Assume $|x-z|>r_0+Kh$. 
In this case, 
$U= D^{\alpha}\phi(x-\cdot)$ is $M=L-|\alpha|$  times continuously differentiable on 
$ \R^d\setminus \overline{B(x,r_0)}$.
 Under these conditions, we have 
 $E^{(\alpha)}(x,z)
 =
 |U(w) -   \sum_{\zeta \in \tilde{X}} \tilde{a}_{\zeta}
 U(\zeta)|$, 
 so
 by (\ref{functional_bound}),  it follows that
 $$ 
 E^{(\alpha)}(x,z)
 \le 
 C h^{M} \max_{|\beta|=M} \|D^{\beta}U\|_{L_{\infty}(B(z,Kh))}
 \le 
 Ch^{L-|\alpha|} \max_{|\gamma|= L}\|D^{\gamma}\phi\|_{L_{\infty}(B(x-z,Kh))}.
 $$
 We note that  $\min \{|\eta|\mid \eta\in B(x-z,Kh)\} \ge \frac12 |x-z|$,
 so by Assumption 2, we have
 $ \|D^{\gamma} \phi \|_{L_{\infty}(B(x-z,Kh))}\le C |x-z|^{s-L}$,
 which implies that
 \begin{equation*}
E^{(\alpha)}(x,z)
\le 
C h^{L-|\alpha|} |x-z|^{s-L}
= 
Ch^{s-|\alpha|}\left(\frac{|x-z|}{h}\right)^{s-L}.
\end{equation*}
 {\em Case 2:} Assume that $r_0-Kh\le|x-z|\le r_0+Kh$. 
 In this case, 
Assumption 2 guarantees continuity of $D^{\beta} \phi$ on $\R^d\setminus \{0\}$ 
 for $|\beta|\le s+d-1$.
 Thus
 $U= D^{\alpha}\phi(x-\cdot)$
has uniformly bounded derivatives of order $M=  s+d-1-|\alpha|$. In
 this case, Lemma \ref{Taylor_Lemma} guarantees
 that 
  \begin{eqnarray*} 
 E^{(\alpha)}(x,z)
& \le& 
 Ch^{s+d-1-|\alpha|}
 \max_{|\beta|=s+d-1-|\alpha|} \|D^{\beta} U\|_{L_{\infty}(B(z,Kh))}\\
&  \le& 
 Ch^{s+d-1-|\alpha|}
 \| \phi\|_{C^{s+d-1}\bigl(\overline{B(0,2r_0)\setminus B(0, r_0/2)}\bigr)}.
 \end{eqnarray*} 
{\em Case 3: } Assume that $2Kh<|x-z|<r_0-Kh$. 
Recall that item 2 of Assumption 2 states that
 $\phi(x) = u(x) + \Hom_s(x)v(x)$ in this region.
To treat this case, we consider the $u$ and $\Hom_s v$ components separately.

By Assumption 2, we have $U = D^{\alpha} u(x-\cdot)$ has smoothness $M=L-|\alpha|$ over the
set $B(x,r_0)$, which contains
$B(x,r_0-Kh)\setminus\overline{B(x,2Kh)}$.
Thus Lemma \ref{Taylor_Lemma} guarantees that
 $
 |D^{\alpha}u(x-z) - 
 \sum_{\xi\in\Xi} a(\xi,z) D^{\alpha}u(x-\xi)|
 \le 
 C h^{L-|\alpha|}
 $.
  Since $|x-z| <r_0$, we have 
 \begin{equation}
 \label{analytic_part} 
 \Bigl|
 	D^{\alpha}u(x-z) - \sum_{\xi\in\Xi} a(\xi,z) D^{\alpha}u(x-\xi)
\Bigr|
\le C r_0^{L-s} h^{s-|\alpha|} \left(\frac{|x-z|}{h}\right)^{s-L}.
\end{equation}
Similarly, letting $U = D^{\alpha} (\Hom_sv) (x-\cdot)$, Lemma \ref{Taylor_Lemma} gives
 \begin{eqnarray*}
&&
\Bigl|
	D^{\alpha} (\Hom_sv) (x-z)  - \sum_{\xi\in\Xi} a(\xi,z) D^{\alpha}(\Hom_s v)(x-\xi)
\Bigr|\\
 &&\le  
 C h^{L-|\alpha|}\max_{|\beta|=L} \|D^{\beta} (\Hom_s v)\|_{L_{\infty}(B(x-z,Kh))}.
 \end{eqnarray*} 
 because $\max_{|\beta|= L-|\alpha|} \|D^{\beta} U\|_{L_{\infty}(B(z,Kh))}\le 
 \max_{|\beta|=L} \|D^{\beta} (\Hom_s v)\|_{L_{\infty}(B(x-z,Kh))}$.
We can estimate $ \|D^{\beta} (\Hom_s v)\|_{L_{\infty}(B(x-z,Kh))}$ by using the  inequality
$$
\|D^{\beta} (\Hom_s v) \|_{L_{\infty}(B(x-z,Kh))}
\le 
C_{\beta}
 \sum_{\gamma\le \beta}  
   \| D^{\gamma} \Hom_s\|_{L_{\infty}(B(x-z,Kh))}  
   \|D^{\beta-\gamma} v\|_{L_{\infty}(B(x-z,Kh))},
$$
which follows with a $\beta$ dependent constant from the Leibniz rule. 
 By (\ref{hom_deriv}), there is a 
 constant so that for any 
 $|\gamma|\le L$,
  $|D^{\gamma} \Hom_s(x)|\le C |x|^{s-L}$ on $B(0,r_0)$.
 Since $|x-z|>2Kh$, it follows that $\inf \{|\zeta|\mid \zeta\in B(x-z,Kh)\}>\frac12|x-z|$, so
 \begin{equation}
 \label{Homogeneous_part}
 \Bigl|
 	D^{\alpha} (\Hom_sv) (x-z)  - \sum_{\xi\in\Xi} a(\xi,z) D^{\alpha}(\Hom_s v)(x-\xi)
\Bigr|
 \le  Ch^{L-|\alpha|} |x-z|^{s-L} .
 \end{equation}
 The result in Case 3 follows by combining (\ref{analytic_part}) and (\ref{Homogeneous_part}).
 \end{proof}
 \begin{lemma}
 \label{near_field}
 Suppose $\phi$ satisfies Assumption 2.
Then for $0< |\alpha|<s+d/2$, and $0<|x-z|<2Kh$, the error kernel satisfies
 $$
 E^{(\alpha)}(x,z)
 \le
 C
 \Bigl(h^{L-|\alpha|}
 +
 |x-z|^{s-|\alpha|} 
 + 
 \sum_{\xi\in\Xi} \bigl| a(\xi,z) \bigr| |x-\xi|^{s-|\alpha|} 
 \Bigr).
 $$
 \end{lemma}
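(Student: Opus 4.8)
The plan is to mirror the structure of the proof of Lemma \ref{far_field}, but to account for the fact that in the near field the singularity of $\phi$ at the origin genuinely interferes. Since $|x-z|<2Kh$ and every active node satisfies $|\xi-z|\le Kh$, the origin may lie inside the cluster $\{x-z\}\cup\{x-\xi : a(\xi,z)\ne 0\}$; consequently $D^{\alpha}\phi(x-\cdot)$ need not be smooth on $B(z,Kh)$, and Lemma \ref{Taylor_Lemma} cannot be applied to $\phi$ as a whole. The first step is therefore to split $\phi=u+\Hom_sv$ using item 2 of Assumption \ref{Assumption2}, which is legitimate because for $h$ small all relevant arguments $x-z$ and $x-\xi$ lie in $B(0,r_0)$. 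By the triangle inequality, $E^{(\alpha)}(x,z)$ is then bounded by the sum of the corresponding error quantities for $u$ and for $\Hom_sv$.

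For the smooth part $u$, I would apply Lemma \ref{Taylor_Lemma} to $U=D^{\alpha}u(x-\cdot)$ at $w=z$ with $r=Kh$ and $M=L-|\alpha|$, exactly as in Case 3 of Lemma \ref{far_field}. Since $u\in C^{L}(\overline{B(0,r_0)})$ is smooth up to and including the origin, $U$ is $(L-|\alpha|)$-times continuously differentiable on a neighborhood of $\overline{B(z,Kh)}$, and local polynomial reproduction supplies $\|\ta\|_{\ell_1(\Xi)}=\Gamma$. This yields the bound $Ch^{L-|\alpha|}$, the first term in the claimed estimate; note that this part still benefits from the cancellation provided by polynomial reproduction.

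The main point is the singular part $\Hom_sv$, where cancellation must be abandoned. Here I would simply use the triangle inequality without exploiting polynomial reproduction, bounding the corresponding quantity by $|D^{\alpha}(\Hom_sv)(x-z)|+\sum_{\xi\in\Xi}|a(\xi,z)|\,|D^{\alpha}(\Hom_sv)(x-\xi)|$. The pointwise derivative bound $|D^{\alpha}(\Hom_sv)(y)|\le C|y|^{s-|\alpha|}$ for $0<|y|<r_0$ follows from the Leibniz rule together with (\ref{hom_deriv}), precisely the estimate already used in Case 3 of Lemma \ref{far_field}, the logarithmic factors occurring when $s\in 2\Nat$ being dominated on the bounded ball $B(0,r_0)$ as before. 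Substituting produces the remaining two terms $|x-z|^{s-|\alpha|}$ and $\sum_{\xi\in\Xi}|a(\xi,z)|\,|x-\xi|^{s-|\alpha|}$, and adding the three contributions gives the lemma.

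The hard part is conceptual rather than computational: recognizing that, unlike the far field, the singular component of $\phi$ cannot be Taylor-expanded across the cluster of points, so one must retain the two raw, uncancelled terms in the bound, with only the smooth perturbation $u$ contributing a genuinely small $h^{L-|\alpha|}$ error. The hypotheses on $|\alpha|$ enter only mildly in the pointwise estimate: together with Assumption \ref{Assumption2} they ensure that $D^{\alpha}\phi$ is continuous on $\R^d\setminus\{0\}$, so the error kernel is well defined. The sharper restriction $|\alpha|<s+d/2$ plays no essential role here; its real purpose is downstream, where it makes $|x-z|^{s-|\alpha|}$ locally square-integrable across the diagonal.
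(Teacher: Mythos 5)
Your proposal is correct and follows essentially the same route as the paper's proof: split $\phi = u + \Hom_s v$ via item 2 of Assumption \ref{Assumption2}, apply Lemma \ref{Taylor_Lemma} with $M = L-|\alpha|$ to the smooth part $u$ to get the $Ch^{L-|\alpha|}$ term, and bound the singular part by the raw triangle inequality combined with the Leibniz rule and the derivative estimates (\ref{hom_deriv}), yielding the two uncancelled terms. Your closing observations — that polynomial-reproduction cancellation must be abandoned for the $\Hom_s v$ component, and that the restriction $|\alpha| < s + d/2$ only matters downstream for square-integrability across the diagonal — accurately reflect both the paper's argument and how the lemma is used in Theorem \ref{main_approximation}.
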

 \begin{proof}
 Assumption 2 allows us to split $E^{(\alpha)}(x,z)$ into
 a totally smooth part  and a homogenous part $ E^{(\alpha)}(x,z) \le E_S +E_H$
 where
 \begin{eqnarray*}
E_S &:=&
 \Bigl|
 	D^{\alpha} u(x-z) -  \sum_{\xi\in\Xi} a(\xi,z)D^{\alpha}u(x-\xi)
\Bigr|\\
E_H &:=& 
 \Bigl|
 	D^{\alpha} (\Hom_sv)(x-z) -  \sum_{\xi\in\Xi} a(\xi,z)D^{\alpha}(\Hom_sv)(x-\xi)
\Bigr|.
 \end{eqnarray*}
 The smooth part is treated as in the proof of Lemma \ref{far_field}.
 In particular, Lemma \ref{Taylor_Lemma} ensures that 
  \begin{equation}
\label{Smooth_near_part}
\Bigl
  	|D^{\alpha} u(x-z) -  \sum_{\xi\in\Xi} a(\xi,z)D^{\alpha}u(x-\xi)
\Bigr|
\le 
Ch^{L-|\alpha|}.
\end{equation}
 To treat 
 $E_H$,
 we use the Leibniz rule and smoothness of $v$, to obtain
 \begin{eqnarray}
 \label{Hom_near_part}
E_H
& \le& 
 C  \sum _{\gamma \le \alpha}
 \bigl(|D^{\gamma} \Hom_s(x-z)|  
+
\sum_{\xi\in\Xi} \bigl| a(\xi,z) \bigr| |D^{\gamma}\Hom_s(x-\xi)|
 \bigr)\nonumber\\
&\le&
C\Bigl(  |x-z|^{s-|\alpha|} 
 + 
 \sum_{\xi\in\Xi} \bigl| a(\xi,z) \bigr| |x-\xi|^{s-|\alpha|} \Bigr),
 \end{eqnarray}
 where the second estimate follows from (\ref{hom_deriv}).
 Combining (\ref{Smooth_near_part}) and (\ref{Hom_near_part}) gives the result.
 \end{proof}
 \begin{theorem}
 \label{main_approximation}
 Suppose $\phi$ satisfies Assumption 2,
$f = \nu*\phi+p$, 
with $p\in \mathcal{P}$, and $\nu\in L_2(\R^d)$ having support in a bounded, open set $\Omega$ having
Lipschitz boundary.
Then for $\sigma$ with $\lceil\sigma\rceil<s+d/2$, the approximation error satisfies
$$
\| f-T_\Xi f \|_{{W}_2^{\sigma}(\R^d)}
\le 
C h^{s+d-\sigma}\|\nu\|_{L_2(\R^d)}.
$$
 \end{theorem}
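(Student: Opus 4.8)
The plan is to reduce everything to an operator-norm estimate for the error kernel and then to handle one honest $L_2$ singularity by duality.

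First I would cancel the polynomial: since $T_\Xi$ reproduces $p$ exactly, $f - T_\Xi f = \int_\Omega \nu(z)\bigl(\phi(\cdot - z) - \sum_{\xi\in\Xi} a(\xi,z)\phi(\cdot-\xi)\bigr)\dif z$, and differentiation under the integral (valid for $|\alpha| < s+d$) gives the pointwise bound $|D^\alpha(f - T_\Xi f)(x)| \le \int_\Omega |\nu(z)| E^{(\alpha)}(x,z)\dif z$. Thus it suffices to show that the integral operator with kernel $E^{(\alpha)}$ maps $L_2(\Omega)\to L_2(\R^d)$ with norm $\le C h^{s+d-|\alpha|}$ for every $|\alpha|\le k$, where I first treat the integer case $\sigma = k < s+d/2$; the fractional case will follow by interpolation. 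I would split $\R^d$ into the far region $\{|x-z|>2Kh\}$ and the near region $\{|x-z|<2Kh\}$ and estimate each contribution separately.

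For the far region I would use a Schur test with Lemma \ref{far_field}: integrating the two branches of that bound over $z\in\Omega$ (respectively over $x\in\R^d$) gives row and column sums both bounded by $Ch^{s+d-|\alpha|}$, where convergence of the tail integral uses $L > s+d$ and the annular contribution uses that the shell $r_0-Kh\le|x-z|\le r_0+Kh$ has volume $O(h)$. For the near region I would invoke Lemma \ref{near_field}, which produces three terms. The constant term $h^{L-|\alpha|}\chi_{|x-z|<2Kh}$ and the term $|x-z|^{s-|\alpha|}\chi_{|x-z|<2Kh}$ are genuine convolution kernels, so Young's inequality bounds their operator norms by their $L_1$ norms, namely $Ch^{L+d-|\alpha|}$ and $Ch^{s+d-|\alpha|}$; since $L > s$, both are $\le C h^{s+d-|\alpha|}$ for $h<1$.

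The one genuinely delicate term is the discrete part $\sum_{\xi} |a(\xi,z)|\,|x-\xi|^{s-|\alpha|}$ of the near-field kernel. When $|\alpha|>s$ this is singular at the data points, and $D^\alpha(f-T_\Xi f)$ really does inherit $|x-\xi|^{s-|\alpha|}$ singularities from the basis functions $D^\alpha\phi(\cdot-\xi)$; this is exactly why a Schur test fails here (the $z$-integral of the kernel blows up as $x\to\xi$) and why the hypothesis enters as $\lceil\sigma\rceil < s+d/2$, guaranteeing that $|x-\xi|^{s-|\alpha|}$ is square-integrable near each $\xi$. I expect this to be the main obstacle. My plan is to estimate the operator norm by duality: for $\psi\in L_2(\R^d)$ I would bound $\int \psi\,(\text{term})$ by $\int_\Omega|\nu(z)|\sum_\xi|a(\xi,z)|\bigl(\int_{|x-z|<2Kh}|\psi(x)|\,|x-\xi|^{s-|\alpha|}\dif x\bigr)\dif z$, apply Cauchy--Schwarz in $x$ on the inner integral (enlarged to $B(\xi,3Kh)$) to get a factor $\| \, |x-\xi|^{s-|\alpha|} \|_{L_2(B(\xi,3Kh))}=Ch^{s+d/2-|\alpha|}$ times $\|\psi\|_{L_2(B(\xi,3Kh))}$, and then use $\sum_\xi|a(\xi,z)|\le\Gamma$ together with $B(\xi,3Kh)\subset B(z,4Kh)$ and Cauchy--Schwarz in $\xi$ to collapse the $\xi$-sum to $\Gamma\|\psi\|_{L_2(B(z,4Kh))}$. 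A final Cauchy--Schwarz in $z$, using only the fixed-radius overlap $|\{z:|x-z|<4Kh\}|=O(h^d)$, then produces the extra $h^{d/2}$ and yields the clean bound $Ch^{s+d-|\alpha|}\|\nu\|_{L_2}\|\psi\|_{L_2}$. The point of routing through duality rather than factoring the operator through $\ell_2(\Xi)$ is that it never counts how many points cluster near a given location, so the constant depends only on $K,\Gamma,\Omega,d$ and not on the separation radius of $\Xi$.

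Finally I would assemble the pieces: summing the bounds $\|D^\alpha(f-T_\Xi f)\|_{L_2}\le Ch^{s+d-|\alpha|}\|\nu\|_{L_2}$ over $|\alpha|\le k$ and using $h<1$ (so the top-order term $|\alpha|=k$ dominates) gives $\|f - T_\Xi f\|_{W_2^k(\R^d)}\le Ch^{s+d-k}\|\nu\|_{L_2}$ for every integer $k<s+d/2$; the case $|\alpha|=0$ is handled by Young's inequality directly since $\phi$ is continuous and carries no singularity. For non-integer $\sigma$ I would obtain the stated estimate by interpolating between the integer orders $\lfloor\sigma\rfloor$ and $\lceil\sigma\rceil$, both admissible since $\lceil\sigma\rceil<s+d/2$, via $W_2^\sigma(\R^d)=[W_2^{\lfloor\sigma\rfloor},W_2^{\lceil\sigma\rceil}]_{\sigma-\lfloor\sigma\rfloor}$; the exponents combine to $h^{s+d-\sigma}$, completing the proof.
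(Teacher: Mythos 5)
Your proposal is correct and follows essentially the same route as the paper's proof: the same splitting into the far field $|x-z|>2Kh$ and near field, the same use of Lemmas \ref{far_field} and \ref{near_field} with a Schur/Young test for the far field and for the first two near-field terms, and the same interpolation between integer orders for fractional $\sigma$. The only cosmetic difference is that you bound the discrete near-field term $\sum_{\xi}|a(\xi,z)|\,|x-\xi|^{s-|\alpha|}$ by duality against $\psi\in L_2(\R^d)$, whereas the paper performs the equivalent direct Cauchy--Schwarz manipulation (writing $|a|=\sqrt{|a|}\sqrt{|a|}$ and exchanging the sum and integrals); both arguments rest on exactly the same ingredients --- the stability bound $\sum_{\xi}|a(\xi,z)|\le\Gamma$, the square-integrability of $|x-\xi|^{s-|\alpha|}$ on $B(\xi,3Kh)$ guaranteed by $\lceil\sigma\rceil<s+d/2$, and an $O(h^d)$ overlap factor --- and yield the same bound $Ch^{s+d-|\alpha|}$.
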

 \begin{proof}
 We begin by considering an integer $\sigma<s+d/2$. 
 Let $\alpha$ be a multi-index with $|\alpha|=\sigma$.
Then we have
 $
 \|D^{\alpha}f- D^{\alpha}T_{\Xi} f \|_{L_2(\R^d)}
 =
 \left( \int_{\R^d} \left| \int_{\R^d} E^{(\alpha)}(x,z) \nu(z) \dif z\right|^2 \dif x\right)^{1/2}
 $
by differentiating under the integral.
Defining quantities $A$ and $B$ as  
\begin{align*}
A&:=  \Bigl\| \int_{|\cdot-z|>2Kh} E^{(\alpha)}(\cdot,z) \nu(z) \dif z\Bigr\|_{L_2(\R^d)},\\ 
B&:=  \Bigl\| \int_{|\cdot-z|<2Kh} E^{(\alpha)}(\cdot,z) \nu(z) \dif z\Bigr\|_{L_2(\R^d)},
\end{align*}
we split the error into two parts:
$ \|D^{\alpha}f- D^{\alpha}T_{\Xi} f \|_{L_2(\R^d)}\le A+B$.
This corresponds to splitting the error kernel as
$E^{(\alpha)}= E_1 +E_2$,
where
\begin{align*}
E_1(x,z)&:=E^{(\alpha)}(x,z)\chi_{\{(x,z)\mid |x-z|>2Kh\}}(x,z)\\ 
E_2(x,z)&:=E^{(\alpha)}(x,z)\chi_{\{(x,z)\mid |x-z|<2Kh\}}(x,z).
\end{align*}
We may control $E_1$ by Lemma \ref{far_field}
and $E_2$ by Lemma \ref{near_field}.

By integrating $E_1(x,z)$  with respect to either $x$ or $z$, 
we obtain an estimate  for the $L_p$ norm of the integral operator  $\mathcal{E}_1: g\mapsto \int_{\R^d} g(z)E_1(x,z) \dif z$.
 In particular, for $1\le p\le \infty$,
 \begin{eqnarray*}
 \|\mathcal{E}_1\|_{L_p(\R^d)\to L_p(\R^d)}
&\le&
 C  h^{s-|\alpha|} \int_{2Kh< |y|<r_0-Kh}\left({|y|}/{h}\right)^{s-L}\dif y\\
&&\mbox{} + 
  C  h^{s+d-1-|\alpha|} \mathrm{vol}(\{y\mid r_0-Kh< |y|<r_0+Kh\})\\
  &&\mbox{} +
  C  h^{s-|\alpha|} \int_{r_0+Kh< |y|<\infty}\left({|y|}/{h}\right)^{s-L}\dif y
\end{eqnarray*}
So  $  \|\mathcal{E}_1\|_{L_p(\R^d)\to L_p(\R^d)}\le  Ch^{s+d-|\alpha|}$.
In particular, this holds for $p=2$,
which gives
\begin{equation}
\label{I_1}
A
\le 
Ch^{s+d-|\alpha|} \|\nu\|_{L_2(\R^d)} .
\end{equation} 

By Lemma \ref{near_field},
$E_2(x,z) \le C( h^{L-|\alpha|}+|x-z|^{s-|\alpha|} + \sum_{\xi\in\Xi} \bigl| a(\xi,z) \bigr| |x-\xi|^{s-|\alpha|})$
for $x,z$ satisfying $|x-z| < 2Kh$.
This allows us to estimate $B$ with three integrals, each generated by one of the above terms.
Defining $B_1$, $B_2$ and $B_3$ as
\begin{align*}
B_1 &:= \Bigl\| \int_{|\cdot-z| < 2Kh} h^{L-|\alpha|} |\nu(z)|\dif z\Bigr\|_{L_2(\R^d)},\\ 
B_2 &:=  \Bigl\| \int_{|\cdot-z| < 2Kh}   |\cdot-z|^{s-|\alpha|}| \nu(z)|\dif z \Bigr\|_{L_2(\R^d)},\\ 
B_3 &:=  \Bigl\| 
			\int_{|\cdot-z| < 2Kh} \sum_{\xi\in\Xi} \bigl|a(\xi,z)\bigr| \, |\cdot-\xi|^{s-|\alpha|}  |\nu(z)|\dif z
		\Bigr\|_{L_2(\R^d)}.
\end{align*}
By H{\"o}lder's inequality, we then have
\begin{equation}\label{decomp}
B  \le C( B_1+B_2+B_3).
\end{equation}
%
The first two parts can be
controlled by the method used for $E_1$, giving 
\begin{eqnarray}
B_1 
&\le& 
C h^{L+d-|\alpha|}\|\nu\|_{L_2(\R^d)},\label{B_1}\\
B_2 
 &\le& C h^{s+d-|\alpha|}\|\nu\|_{L_2(\R^d)} ,\label{B_2}
\end{eqnarray} 
since $s-|\alpha|>-d/2>-d$.

To handle $B_3$,
we apply H{\"o}lder's inequality to the sum $ \sum_{\xi\in\Xi} \bigl|a(\xi,z)\bigr| \, |\cdot-\xi|^{s-|\alpha|} $, 
writing $|a(\xi,z)|=\sqrt{|a(\xi,z)|}\sqrt{|a(\xi,z)|}$, 
and $\|a(\cdot,z)\|_{\ell_1(\Xi)} = \sum_{\xi\in \Xi} |a(\xi,z)|$
to obtain
$$
(B_3)^2\le
\int_{\R^d} 
\Bigl|\int_{B(x,2Kh)} 
\|a(\cdot,z)\|_{\ell_1(\Xi)}^{1/2}
 \Bigl(
 	\sum_{\xi\in \Xi} |a(\xi,z)| \, |x-\xi|^{2(s-|\alpha|)}\, |\nu(z)|^2
\Bigr)^{1/2} 
\dif z
 \Bigr|^2\dif x.
$$
Applying H{\"o}lder's inequality to the inner integral gives
\begin{eqnarray*}
(B_3)^2
&\le&
\int_{\R^d}
\Bigl(
\int_{B(x,2Kh)} 
\|a(\cdot,\zeta)\|_{\ell_1(\Xi)}
\dif \zeta
\Bigr)\times\\
&&
\quad
\int_{B(x,2Kh)} \sum_{\xi\in \Xi} |a(\xi,z)|   \, |x-\xi|^{2(s-|\alpha|)}\, \bigl|\nu(z)\bigr|^2 \dif z\dif x.
\end{eqnarray*}
By the estimate
$
\int_{B(x,2Kh)} 
\|a(\cdot,\zeta)\|_{\ell_1(\Xi)}
\dif \zeta
\le 
\int_{B(x,2Kh)}\Gamma \dif \zeta
\le 
C h^d
$,
we have
$$
(B_3)^2\le
Ch^d
\int_{\R^d}
\int_{B(x,2Kh)} \sum_{\xi\in \Xi} |a(\xi,z)| \, |x-\xi|^{2(s-|\alpha|)}\, |\nu(z)|^2 \dif z\dif x.
$$
Because $a(\xi,z)=0$ when $|z-\xi|>Kh$ and $z\in B(x,2Kh)$,
the inner sum is taken only over $\xi\in \Xi$ which are  within  $3Kh$ from $x$.
We use this to switch the order of sums and integrals:
\begin{eqnarray*}
(B_3)^2
&\le&
Ch^d
\int_{\R^d}
\sum_{|\xi-x|<3Kh}
\, |x-\xi|^{2(s-|\alpha|)}\,
 \left(\int_{B(x,2Kh)} \bigl| a(\xi,z) \bigr| \bigl|  \nu(z) \bigr|^2 \dif z\right)\dif x\\
&\le&
 Ch^d
\sum_{\xi\in\Xi}
\left(
\int_{B(\xi,3Kh)}
\, |x-\xi|^{2(s-|\alpha|)}\,
 \dif x
 \right)
 \left(\int_{\R^d}|a(\xi,z)| \bigl|  \nu(z) \bigr|^2 \dif z\right).
\end{eqnarray*}
The last integral can be made larger by increasing the domain
of integration to $\R^d$.
At this point, we observe that 
$\int_{B(\xi,2Kh)}
 \, |x-\xi|^{2(s-|\alpha|)}\,
 \dif x\le C h^{2s-2|\alpha|+d}$.
This leaves
\begin{eqnarray}\label{B_3}
(B_3)^2
&\le& 
Ch^{2s-2|\alpha|+2d}
\int_{\R^d}
\|a(\cdot,z)\|_{\ell_1(\Xi)}
 |\nu(z)|^2 \dif z\nonumber\\
&\le&
C\Gamma h^{2s+2d-2|\alpha|}
\|\nu\|_{L_2(\R^d)}^2 
\end{eqnarray}

 The  bound $
B
\le  
C h^{s+d-|\alpha|}\| \nu\|_{L_2(\R^d)}
$
follows from the
 decomposition (\ref{decomp})  and estimates (\ref{B_1}), (\ref{B_2}) and (\ref{B_3}).
 Combining this fact with (\ref{I_1}) completes the proof in case $\sigma \in \Nat$.
 
 For fractional $\sigma$ with $\lceil\sigma\rceil< s+d$, we simply interpolate between 
 integer order Sobolev spaces, using
 $\sigma_1=\lfloor \sigma\rfloor$ and $\sigma_2=\lceil \sigma \rceil$, 
 so that $\sigma = \theta \sigma_2 +(1-\theta) \sigma_1$.
 This can be done by using H{\"o}lder's inequality to estimate the Fourier characterization of the $H^{\sigma}$ norm, or  to 
 by way of the Gagliardo-Nirenberg inequality.
 In either case, we have the estimate
 $\|F\|_{W_2^{\sigma}(\R^d)}\le C \|F\|_{W_2^{\sigma_1}(\R^d)}^{1-\theta}\|F\|_{W_2^{\sigma_2}(\R^d)}^{\theta}$,
 which ensures
 $$  \|f-T_{\Xi} f\|_{W_2^{\sigma}}
 \le
 \bigl(C h^{s+d-\sigma_1} \|\nu\|_{L_2(\R^d)} \bigr)^{1-\theta} 
 \bigl(C h^{s+d-\sigma_2} \|\nu\|_{L_2(\R^d)}\bigr)^{\theta}
 .
 $$
 The result follows because $h^{(s+d-\sigma_1)(1-\theta)} h^{(s+d-\sigma_2)\theta} =h^{s+d-\sigma}$.
 \end{proof}

 %
 %
 %
 %
\section{Interpolation with positive definite RBFs}
\label{S:main_PD}
With the aid of the Bernstein estimates from section \ref{S:Bernstein}, 
we show that the approximation rate
of Theorem \ref{main_approximation}
is inherited by RBF interpolation: 
for this, we consider 
an RBF $\phi$ having a  native space 
which is norm equivalent to $ H^{\tau}$, and 
a target function  for which the doubling result of \cite{schaback_doubling} applies.
We measure the interpolation error 
$\|f-I_{\Xi} f\|_{H^{\sigma}(\R^d)}$
for suitable values of $\sigma>0$.

\subsection{Main result for  positive definite RBFs}
\begin{theorem}\label{main_interpolation}
Suppose $\tau>d/2$ and $\phi$ is a  positive definite RBF 
with native space equivalent to the Sobolev space $H^\tau(\R^d)$.
Suppose, further, that $\phi$ satisfies Assumption 2  with $s=2\tau-d$.
If $\Omega\subset \R^d$ is compact and satisfies an interior cone condition,
then there is a constant $C$ so that the following holds.
For any $f\in H^{2\tau}(\R^d)$ 
 which satisfies 
 $f=\phi*\nu$  with
 $\nu\in L_2(\R^d)$ supported
 in $\Omega$,
for any sufficiently dense subset ${\Xi}\subset \Omega$
and
for $\sigma>0$ satisfying $\lceil \sigma\rceil < 2\tau-d/2$,
the inequality
$$
\|f-I_{\Xi}f\|_{H^{\sigma}(\R^d)}
\le 
C h^{\tau}q^{\tau-\sigma}\|\nu\|_{L_2(\R^d)}
$$
holds.
\end{theorem}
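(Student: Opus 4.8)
The plan is to interpose the integral approximant $T_{\Xi}f$ constructed in Section~\ref{S:Approx} between $f$ and its interpolant, and to split the error by the triangle inequality as $\|f-I_{\Xi}f\|_{H^{\sigma}(\R^d)}\le \|f-T_{\Xi}f\|_{H^{\sigma}(\R^d)}+\|T_{\Xi}f-I_{\Xi}f\|_{H^{\sigma}(\R^d)}$. Because $\phi$ is positive definite its order satisfies $\m\le 0$, the nullspace $\mathcal{P}_{\m-1}$ is trivial, and Remark~\ref{annihilation_remark} places $T_{\Xi}f\in V_{\Xi}(\phi)$; since $I_{\Xi}f\in V_{\Xi}(\phi)$ as well, the second difference is an element of $V_{\Xi}(\phi)$ and is therefore accessible to the Bernstein inequality of Theorem~\ref{cpd_euclidean}, while the first difference is handled directly by the Jackson estimate of Theorem~\ref{main_approximation}. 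I carry out the argument in the genuinely new regime $\sigma\ge\tau$, where the loss factor $q^{\tau-\sigma}$ is the novelty; the lower range reduces to the classical doubling estimate.

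The crux is the second term, and the point is that the orthogonality I can exploit lives in the native norm, not in $H^{\sigma}$. Since $I_{\Xi}$ is the $\N(\phi)$-orthogonal projector onto $V_{\Xi}(\phi)$, the element $I_{\Xi}f$ is the $\N(\phi)$-best approximation of $f$ from $V_{\Xi}(\phi)$, so $|f-I_{\Xi}f|_{\N(\phi)}\le |f-T_{\Xi}f|_{\N(\phi)}$ and hence $|T_{\Xi}f-I_{\Xi}f|_{\N(\phi)}\le 2|f-T_{\Xi}f|_{\N(\phi)}$. Using the norm equivalence $\N(\phi)\cong H^{\tau}$ and then Theorem~\ref{main_approximation} at order $\tau$ (where $s+d=2\tau$, so the rate is $h^{2\tau-\tau}=h^{\tau}$) I expect $\|T_{\Xi}f-I_{\Xi}f\|_{H^{\tau}(\R^d)}\le C\|f-T_{\Xi}f\|_{H^{\tau}(\R^d)}\le Ch^{\tau}\|\nu\|_{L_2(\R^d)}$. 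Only now do I raise the smoothness: since $T_{\Xi}f-I_{\Xi}f\in V_{\Xi}(\phi)$ and $0\le\sigma-\tau<\tau-d/2$, Theorem~\ref{cpd_euclidean} read through $\N(\phi)\cong H^{\tau}$ and $\opJ_{\sigma-\tau}$ yields $\|T_{\Xi}f-I_{\Xi}f\|_{H^{\sigma}(\R^d)}\le Cq^{-(\sigma-\tau)}\|T_{\Xi}f-I_{\Xi}f\|_{H^{\tau}(\R^d)}\le Cq^{\tau-\sigma}h^{\tau}\|\nu\|_{L_2(\R^d)}$, which is exactly the target rate.

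It remains to absorb the first term into the same bound. Theorem~\ref{main_approximation} at order $\sigma$ gives $\|f-T_{\Xi}f\|_{H^{\sigma}(\R^d)}\le Ch^{2\tau-\sigma}\|\nu\|_{L_2(\R^d)}$, and since $q\le h$ and $\sigma\ge\tau$ one has $h^{2\tau-\sigma}=h^{\tau}h^{-(\sigma-\tau)}\le h^{\tau}q^{-(\sigma-\tau)}=h^{\tau}q^{\tau-\sigma}$, so this term too is controlled by $Ch^{\tau}q^{\tau-\sigma}\|\nu\|_{L_2(\R^d)}$; summing the two estimates proves the theorem. The main obstacle is not any single estimate but the bookkeeping that makes them fit together: the best-approximation property forces me to measure $T_{\Xi}f-I_{\Xi}f$ first in the native norm and only afterwards to climb to $H^{\sigma}$ via the inverse inequality, and I must check that the admissibility hypothesis $\lceil\sigma\rceil<2\tau-d/2$ simultaneously licenses Jackson at order $\tau$ (because $\lceil\tau\rceil\le\lceil\sigma\rceil<2\tau-d/2$) and Bernstein at order $\sigma$ (because $\sigma-\tau<\tau-d/2$), with ``sufficiently dense'' ensuring $q<1$ together with the existence of the local polynomial reproduction underlying $T_{\Xi}$.
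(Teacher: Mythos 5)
Your proof is correct, and its skeleton is the paper's: interpose $T_{\Xi}f$, bound $f-T_{\Xi}f$ in $H^{\sigma}$ by the Jackson estimate of Theorem \ref{main_approximation}, bound $T_{\Xi}f-I_{\Xi}f\in V_{\Xi}(\phi)$ in $H^{\sigma}$ via the Bernstein inequality of Theorem \ref{cpd_euclidean} after first controlling it in $\N(\phi)$, then absorb $h^{2\tau-\sigma}\le h^{\tau}q^{\tau-\sigma}$ using $q\le h$ (an implicit mesh-ratio assumption the paper makes at exactly the same point). The genuine difference is how the native-space bound is obtained. The paper proves $\|f-I_{\Xi}f\|_{\N(\phi)}\le Ch^{\tau}\|\nu\|_{L_2(\R^d)}$ by Schaback's doubling argument, $\|f-I_{\Xi}f\|_{\N(\phi)}^2\le \|f-I_{\Xi}f\|_{L_2(\Omega)}\|\nu\|_{L_2(\R^d)}$, combined with the zeros/sampling inequality $\|f-I_{\Xi}f\|_{L_2(\Omega)}\le Ch^{\tau}\|f-I_{\Xi}f\|_{W_2^{\tau}(\R^d)}$; you instead observe that $I_{\Xi}$ is the $\N(\phi)$-orthogonal projector onto $V_{\Xi}(\phi)$ and that $T_{\Xi}f\in V_{\Xi}(\phi)$, so that $\|f-I_{\Xi}f\|_{\N(\phi)}\le \|f-T_{\Xi}f\|_{\N(\phi)}$, and then reuse the Jackson estimate at order $\tau$ (whose admissibility $\lceil\tau\rceil<2\tau-d/2$ you verify via $\lceil\tau\rceil\le\lceil\sigma\rceil$ in the regime $\sigma\ge\tau$ --- a check the paper leaves implicit). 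Both routes produce the same $h^{\tau}$ factor, so yours is a legitimate and arguably cleaner alternative: it needs neither the doubling identity nor the Narcowich--Ward--Wendland zeros lemma, only the best-approximation property already recorded in Section \ref{SS:background}. What the paper's route buys is that the bound on $\|f-I_{\Xi}f\|_{\N(\phi)}$ is established intrinsically, without reference to the quasi-interpolant, and it is the template reused in the conditionally positive definite case (Theorem \ref{main_interpolation_CPD}), where the analogous quantity $\EE(f)$ must then be controlled by the power function or the zeros lemma; it is worth noting that your projection argument would apply there too (the best-approximation property holds for the semi-inner product, and $T_{\Xi}f\in V_{\Xi}(\phi)$ under the annihilation hypothesis), so it is not less general. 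Finally, like the paper, you dispose of the range $\sigma\le\tau$ by deferring to known results, which matches the paper's own restriction of its proof to $\sigma>\tau$.
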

If ${\Xi}$ is such that $q$ and $h$ are kept roughly on par, e.g., 
if ${\Xi}$ is quasi-uniform with  controlled mesh ratio $\rho= h/q$,
then $\|f-I_{\Xi}f\|_{W_2^{\sigma}(\Omega)}\le C\rho^{\tau-\sigma} h^{2\tau-\sigma}\|\nu\|_{L_2(\R^d)}$. 
This extends  previous doubling results in this context, which held for $\sigma\le \tau$. 
In other words, the novelty of this theorem is that it holds in case 
$\tau<\sigma$ and $\lceil \sigma \rceil <2\tau-d/2$.
\begin{proof}
By the above comment, we consider $\sigma$ which satisfies $\sigma>\tau$ and $\lceil \sigma\rceil <2\tau-d/2$.
By hypothesis, 
$\phi$ satisfies Assumption 1. 
Thus Theorem \ref{cpd_euclidean} applies to  
$I_{\Xi}f-T_{\Xi} f\in V_{\Xi}(\phi)$,
and
$\|\opJ_{\sigma-\tau}(I_{\Xi}f-T_{\Xi} f)\|_{\N(\phi)}\le C q^{\tau-\sigma}\|I_{\Xi}f-T_{\Xi} f\|_{\N(\phi)}$
holds, which implies
$$\|I_{\Xi}f-T_{\Xi} f\|_{H^{\sigma}}\le C q^{\tau-\sigma}\|I_{\Xi}f-T_{\Xi} f\|_{\N(\phi)}.$$
Theorem \ref{main_approximation}  gives 
$\|f-T_{\Xi}f\|_{\N(\phi)} \le
C \|f-T_{\Xi}f\|_{H^{\tau}} \le C h^{\tau} \|\nu\|_{L_2(\R^d)}$,
while the standard doubling argument given in the proof of  \cite[Theorem 5.1]{schaback_doubling} 
shows that
\begin{eqnarray*}
\|f-I_{\Xi}f\|_{\N(\phi)}^2
\le 
 \|f-I_{\Xi}f\|_{L_2(\Omega)}\|\nu \|_{L_2(\R^d)}.
\end{eqnarray*}
Since the inequality  
$\|f-I_{\Xi}f\|_{L_2(\Omega)}\le C h^\tau\| f-I_{\Xi}f \|_{{W}_2^\tau(\R^d)}$
holds by standard arguments 
(see \cite[Theorem 11.32]{Wend}, or the original version \cite[Theorem 2.12]{NWW}), we have, by
norm equivalence of the spaces $\N(\phi)\sim H^{\tau}\sim W_2^{\tau}(\R^d)$, that
$$\|f-I_{\Xi} f\|_{\N(\phi)}
\le 
Ch^{\tau} \|\nu\|_{L_2(\R^d)}.$$
Thus, the triangle inequality gives
$\|I_{\Xi} f-T_{\Xi} f\|_{\N(\phi)}
\le
C h^\tau \|\nu\|_{L_2(\R^d)}$, and 
\begin{equation}\label{indirect}
\|I_{\Xi}f-T_{\Xi} f\|_{H^{\sigma}}
\le C q^{\tau-\sigma} h^{\tau} \|\nu\|_{L_2(\R^d)}
\end{equation}
follows.
On the other hand, 
 a direct application of Theorem \ref{main_approximation}   gives 
\begin{equation}\label{jackson}
\|f-T_{\Xi}f\|_{H^{\sigma}} 
\le
  C h^{2\tau-\sigma} \|\nu\|_{L_2(\R^d)}.
 \end{equation}
Together, (\ref{jackson}) and (\ref{indirect}) give
$$
\|f-I_{\Xi}f\|_{ H^{\sigma}}
\le
\|f-T_{\Xi}f\|_{H^{\sigma}}
+\|T_{\Xi} f-I_{\Xi}f\|_{H^{\sigma}} 
\le 
C h^{\tau}q^{\tau-\sigma} \|\nu\|_{L_2(\R^d)}
$$ 
and the result follows.
\end{proof}

%
%
%
\subsection{A note on compactly supported RBFs}
\label{S:compact_support}
As pointed out in Example \ref{Wendland_example_2}, 
the compactly supported RBFs of minimal degree constructed in \cite[Chapter 9]{Wend}
do not satisfy Assumption 2, unless $d=2$. 
This is precisely because of the  behavior at  the boundary of the support of $\phi_{d,k}$. 
This can be addressed by following the same construction, but using a radial polynomial of slightly higher degree. (It may
also be satisfied by other compactly supported RBFs, of which there are many,  
one may find other constructions in \cite{Buhmann,CharinaContiDyn,Wu}.) 

We recall here some aspects of Wendland's construction which
can be used to 
 construct compactly supported RBFs that satisfy Assumption 2.
 
For a measurable function, 
 $f:(0,\infty)\to \R$
which is  integrable with respect to  $\dif\mu =s\dif s$,
 we define  $\I f(r):= \int_r^{\infty}sf(s) \dif s$.  
 The operator $\I$ has an intertwining property with the Fourier
transform:
 the $d+2$-dimensional Fourier transform of a suitably integrable  radial function
 $f$ equals, as a radial function, the $d$-dimensional Fourier transform of $\I f$, see e.g \cite[Lemma 2.1]{W95}:
i.e.,  
\begin{eqnarray*}
&&r^{-d/2}\int_0^{\infty} f(t) t^{(d+2)/2} J_{d/2}(rt)\dif t \\
&& = r^{-(d-2)/2}\int_0^{\infty} \I f(t) t^{d/2} J_{(d-2)/2}(rt)\dif t  \quad  \text{for all } r>0.
\end{eqnarray*}
Define $\psi_{\ell}(0,\infty)\to \R $ by $\psi_{\ell} := (1-\cdot)_+^{\ell}$.
Then for  spatial dimension $d$, and integer $\ell\ge\lfloor d/2\rfloor +1$,
 the function $x\mapsto \psi_{\ell}(|x|) = (1-|x|)_+^{\ell}$  is radial, positive definite and supported in $B(0,1)$. 
Via Bochner's theorem and the above intertwining formula,
 $x\mapsto (\I^k \psi_{\ell})(|x|)$ is positive definite as well, see also \cite[Eq. (5)]{W95}.

The  RBFs of minimal degree described in Example \ref{Wendland_example_2} are defined
as $\phi_{k,d} := \I^k \psi_{\ell}$, with $\ell= k+\lfloor d/2\rfloor +1$.
  For general $k,\ell$,  and $f:[0,1]\to \R$,
  a simple induction gives the identity
 $\I^k f  (r) =\frac{2^{1-k}}{\Gamma(\alpha)} \int_r^1 t f(t) (t^2-r^2)^{k-1}\dif t$ 
 for $r\le1$. In particular, the family of functions
 $$\I^k \psi_{\ell}(r) := \frac{2^{1-k}}{\Gamma(\alpha)} \int_r^1 t (1-t)^{\ell} (t^2-r^2)^{k-1}\dif t$$ 
 can be extended to non-integer values of $k$ and $\ell$; such ``generalized Wendland functions'' have been
 introduced and studied in  \cite{CH}.
 
 By  collecting known results from \cite{Wend} and \cite{CH}, the following proposition shows
 that, 
 for $\ell\ge k+d$, 
 each kernel $\I^k \psi_{\ell}$ satisfies the hypotheses of Theorem \ref{main_interpolation}.
 In particular, we have the compatibility 
 between Sobolev order $m$ and homogeneity parameter $s$ from Assumption 2: 
 namely $s =  2m-d$
  since both quantities equal $2k+1$.

\begin{proposition}
 For  integers $k,\ell$ satisfying $\ell \ge k+d$,
the function
$$\psi_{\ell,k}:\R^d\to \R: x\mapsto \I^k \psi_{\ell}(|x|) $$
is a compactly supported RBF which satisfies Assumption 2 with $s=2k+1$.
Its native space, $\N(\psi_{\ell,k})$,  is norm equivalent to $ W_2^m(\R^d)$ with  $m=k+\frac{d+1}{2}$.
\end{proposition}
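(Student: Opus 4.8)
The plan is to verify the three assertions --- that $\psi_{\ell,k}$ is a positive definite RBF, that it satisfies Assumption 2 with $s=2k+1$, and that its native space is $W_2^m(\R^d)$ with $m=k+\frac{d+1}{2}$ --- by exploiting the intertwining relation between $\I$ and the Fourier transform together with an explicit analysis of the germ of $\I^k\psi_\ell$ at the origin.

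First I would dispense with positive definiteness, the far field, and smoothness. Iterating the intertwining identity $k$ times shows that the $d$-dimensional Fourier transform of $\psi_{\ell,k}=\I^k\psi_\ell$ equals the $(d+2k)$-dimensional Fourier transform of $\psi_\ell=(1-\cdot)_+^\ell$. Since $\ell\ge k+d\ge\lfloor(d+2k)/2\rfloor+1$, the function $x\mapsto(1-|x|)_+^\ell$ is strictly positive definite in dimension $d+2k$, so $\widehat{\psi_\ell}^{\,(d+2k)}>0$; hence $\widehat{\psi_{\ell,k}}>0$ on $\R^d\setminus\{0\}$ and, being continuous and compactly supported (so of at most algebraic growth), $\psi_{\ell,k}$ is a strictly positive definite RBF by the version of Bochner's theorem recalled in Section \ref{SS:background}. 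Because $\psi_{\ell,k}$ is supported in $\overline{B(0,1)}$, item (1) of Assumption 2 holds trivially with $r_0=1$, all derivatives vanishing for $|x|>1$. For the smoothness requirement $\phi\in C(\R^d)\cap C^{s+d-1}(\R^d\setminus\{0\})$, I would use $\frac{\dif}{\dif r}\I g(r)=-r\,g(r)$, so that each application of $\I$ raises smoothness by one; starting from $\psi_\ell\in C^{\ell-1}$ across $|x|=1$ this yields $\psi_{\ell,k}\in C^{\ell+k-1}(\R^d\setminus\{0\})$, and for $\ell\ge k+d$ this supplies the order $s+d-1$ demanded by Assumption 2, the support boundary being the only place away from the origin where smoothness could be lost.

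The heart of the proof is item (2) of Assumption 2: the expansion $\psi_{\ell,k}(x)=u(x)+\Hom_{2k+1}(x)\,v(x)=u(x)+|x|^{2k+1}v(x)$ near the origin, with $u,v$ smooth. I would track how $\I$ acts on the two kinds of germ that occur. Writing $\I g(r)=C_g-\int_0^r s\,g(s)\,\dif s$ near the origin, a direct computation shows that $\I$ maps a smooth even function to a smooth even function, and maps $|x|^{2j+1}$ times a smooth even function to a smooth even function plus $|x|^{2j+3}$ times a smooth even function, with nonzero leading coefficient $-\frac{1}{2j+3}$ for the new odd term. Since the binomial expansion $(1-r)^\ell=\sum_i\binom{\ell}{i}(-r)^i$ has lowest-order odd part $-\ell\,r$, an induction on the number of applications of $\I$ gives that $\I^k\psi_\ell$ has near-origin germ of the form (smooth even) plus $|x|^{2k+1}$ times (smooth even), with $v(0)\ne0$; as $2k+1\notin2\Nat$ we have $\Hom_{2k+1}=|x|^{2k+1}$, which is exactly item (2) with $s=2k+1$. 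I expect this bookkeeping --- in particular verifying that the homogeneity of the singular part increases by exactly two at each step and that its leading coefficient never cancels --- to be the main obstacle, since it is what pins down the sharp value $s=2k+1$, and hence the native space.

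Finally, for the native space I would upgrade the positivity of $\widehat{\psi_{\ell,k}}=\widehat{\psi_\ell}^{\,(d+2k)}$ to the two-sided bound $\widehat{\psi_{\ell,k}}(\omega)\asymp(1+|\omega|^2)^{-(2k+d+1)/2}=(1+|\omega|^2)^{-m}$. The leading origin singularity $|x|^{2k+1}$ isolated above has generalized Fourier transform proportional to $|\omega|^{-(2k+1+d)}$, while the smoother boundary term $(1-|x|)_+^{\ell+k}$ contributes a strictly faster-decaying, lower-order correction; together with global positivity this yields matching upper and lower bounds for large $|\omega|$, and continuity with positivity handles bounded $\omega$. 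This is precisely the decay estimate for Fourier transforms of truncated powers collected from \cite{Wend} and \cite{CH}. By the Fourier characterization (\ref{Fourier_NS}) of the native space norm together with the standard equivalence $H^m\sim W_2^m(\R^d)$ and Lemma \ref{Sob_equiv}, the two-sided bound gives $\N(\psi_{\ell,k})\sim H^m=W_2^m(\R^d)$ with $m=k+\frac{d+1}{2}$, completing the proof and confirming the compatibility $s=2m-d=2k+1$.
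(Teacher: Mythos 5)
Your route is essentially the paper's own: positive definiteness via the intertwining identity and Bochner's theorem, item (1) of Assumption \ref{Assumption2} trivially from $\mathrm{supp}(\psi_{\ell,k})\subset\overline{B(0,1)}$, item (2) by isolating the odd powers of $|x|$, and the native space identification from the two-sided Fourier bound $\widehat{\psi_{\ell,k}}(\omega)\asymp(1+|\omega|^2)^{-(2k+d+1)/2}$ imported from the literature. Two local remarks. Your germ induction for item (2) is correct --- one checks $\I\bigl(r^{2j+1}g(r^2)\bigr)=\mathrm{const}+r^{2j+3}\tilde{g}(r^2)$ with $\tilde{g}(0)=-g(0)/(2j+3)$ --- and it amounts to a self-contained proof of the vanishing-odd-coefficients fact that the paper simply cites from \cite[Theorem 9.12]{Wend} and \cite[Theorem 3.2]{CH}; that substitution is fine. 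Your justification of the Fourier bound, on the other hand, is only heuristic: $\psi_{\ell,k}$ is not literally a sum of $|x|^{2k+1}$ and a boundary term $(1-|x|)_+^{\ell+k}$, and lower bounds for Fourier transforms do not follow from such splittings without care; since you end by citing \cite{Wend} and \cite{CH} for the two-sided bound, exactly as the paper does (it cites \cite[Corollary 2.4, Eqn.~(2.3)]{CH}), this step is acceptable, but the sketch is not a proof.

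The genuine gap is in the smoothness arithmetic. Assumption \ref{Assumption2} with $s=2k+1$ demands $\psi_{\ell,k}\in C^{s+d-1}(\R^d\setminus\{0\})=C^{2k+d}(\R^d\setminus\{0\})$. Your count places $\psi_{\ell,k}$ in $C^{\ell+k-1}$ across the sphere $|x|=1$, and under the stated hypothesis $\ell\ge k+d$ this yields only $C^{2k+d-1}$ --- one order short --- so your sentence ``for $\ell\ge k+d$ this supplies the order $s+d-1$'' does not follow from your own numbers; you would need $\ell\ge k+d+1$. The discrepancy is not innocent. The paper's proof instead asserts $\I^k\psi_{\ell}\in C^{\ell+k}\bigl((0,\infty)\bigr)$ via \cite[Lemma 9.8]{Wend}, which is precisely what makes $\ell\ge k+d$ suffice; but your count is the correct one: since $\frac{\dif}{\dif r}\I f(r)=-rf(r)$, each application of $\I$ gains exactly one order, and indeed $\I^k\psi_{\ell}=(1-r)_+^{\ell+k}q(r)$ with $q(1)\neq0$ (for instance $\I\psi_3\propto(1-r)_+^4(4r+1)$, which is $C^3$ but not $C^4$ at $r=1$). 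So as written your proof does not close, and closing it honestly shows that at the extreme case $\ell=k+d$ the kernel misses the $C^{s+d-1}$ requirement by one order at the boundary of its support. A repair must either strengthen the hypothesis to $\ell\ge k+d+1$, or observe that the $(\ell+k-1)$-th derivative of $\psi_{\ell,k}$ is Lipschitz across $|x|=1$, so that derivatives of order $\ell+k=s+d-1$ exist off the sphere and are bounded, which is all the Taylor-remainder argument of Lemma \ref{Taylor_Lemma} actually needs --- but that requires restating the smoothness hypothesis of Assumption \ref{Assumption2} in a Lipschitz/Sobolev form rather than simply asserting membership in $C^{s+d-1}$ as you do.
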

\begin{proof}
 Smoothing properties of the operator $\I$ given in \cite[Lemma 9.8]{Wend} guarantee
that $\I^k \psi_{\ell} \in C^{\ell+k}\bigl((0,\infty)\bigr)$. 
Since $s=2k+1$ and $\ell\ge k+d$, it follows that 
$\ell+k\ge s+d-1$, so
$\I^k \psi_{\ell} \in C^{s+d-1}\bigl((0,\infty)\bigr)$, as required. 

Because $\phi$ has support in $B(0,1)$,
item 1 holds with $r_0=1$.

Since each application of $\I$ increases the polynomial degree by 2, 
$\I^k \psi_{\ell}$ is  polynomial of degree $2k+\ell$, and, as observed in \cite[Theorem 9.12]{Wend}, 
the first $k$ odd-degree coefficients in the monomial expansion of $\I^k\psi_{\ell}$ vanish. 
This also follows directly from the formula in \cite[Theorem 3.2]{CH}.
By splitting into even and odd degree powers, we obtain
 $$\I^k\psi_{\ell}(|x|)= \sum_{j=0}^{2k+\ell} d_j |x|^{j}
=
\left(\sum_{j=0}^{k+\lfloor \ell/2\rfloor} d_{2j}|x|^{2j}\right)
+ 
|x|^{2k+1}\left( \sum_{j=0}^{\lfloor (\ell-1)/2\rfloor} d_{2k+1+2j} |x|^{2j}\right)$$
so item 2 holds with $s=2k+1$.

The fact that $\N(\psi_{\ell,k}) = W_2^{k+\frac{d+1}{2}}(\R^d)$ 
has been observed in \cite[Corollary 2.4]{CH}.
Specifically, 
the $d$-dimensional 
Fourier transform of
$\psi_{\ell,k}$ is shown to satisfy $\widehat{\psi_{\ell,k}}(\xi) \sim (1+|\xi|)^{-(d+2k+1)}$
in \cite[Eqn. (2.3)]{CH} . 
\end{proof}

%
%
%
%
%
%
%
\section{Interpolation using conditionally positive definite RBFs}
\label{S:main_CPD}
The CPD case requires an extra assumption and has a slightly different error estimate. 
For various reasons, the target function $f=\nu*\phi+p\in \N(\phi)$ must 
satisfy the 
polynomial annihilation condition 
$\nu\perp \mathcal{P}_{\m-1}$, which is equivalent to the vanishing moment condition 
$\widehat{\nu}(\xi) =\mathcal{O}(|\xi|^{\m})$. 
Furthermore, the error estimate is initially in terms of the quantity a
$\EE (f) $,
which we can be refined in a few ways (this is discussed after the proof).

Section \ref{SS:main_CPD} provides the  analogous result  to Theorem \ref{main_interpolation} for CPD kernels.
Sections \ref{SS:algebraic} and \ref{SS:surface_splines} give instances where the annihilation condition is guaranteed to hold
and provide bounds for the quantity $\EE (f) $ in terms of the fill distance.

%
%
%
\subsection{Main result for  conditionally positive definite RBFs}
\label{SS:main_CPD}
\begin{theorem}
\label{main_interpolation_CPD}
Suppose 
$\phi$ is an RBF which is 
CPD of order $\m$ 
and which satisfies
Assumptions 1 and 2, with $s=2\tau-d$. %
If $\Omega\subset \R^d$ is compact and satisfies an interior cone condition,
then there is a constant $C$ so that
for $f\in \N(\phi)$, with $f=\phi*\nu +p$ with $p\in \mathcal{P}_{\m-1}$,
 and $\nu\in L_2(\R^d)$, having
support in $\Omega$,
and $\nu\perp \mathcal{P}_{\m-1}$,
if ${\Xi}\subset \Omega$
is a
 sufficiently dense set, then
$$
|\opJ_{\sigma-\tau}(f-I_{\Xi}f)|_{\N(\phi)}
\le 
C q^{\tau-\sigma}
(h^{\tau} +
\EE(f) 
)\|\nu\|_{L_2(\R^d)}
$$
with $\lceil \sigma\rceil < 2\tau-d/2$.
Here $\EE(f):=\frac{ \|f-I_{\Xi} f\|_{L_2(\Omega)}}{|f-I_{\Xi}f|_{\N(\phi)}}$.
\end{theorem}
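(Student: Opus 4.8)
The plan is to mirror the structure of the positive definite proof (Theorem \ref{main_interpolation}), using the integral approximant $T_\Xi f$ as an intermediate comparison object, but carrying out the entire argument using the native space \emph{seminorm} and the operator $\opJ_{\sigma-\tau}$ rather than full Sobolev norms. The key insight is that $I_\Xi f - T_\Xi f \in V_\Xi(\phi)$, so the Bernstein estimate of Theorem \ref{cpd_euclidean} applies to it directly.

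First I would invoke Remark \ref{annihilation_remark}: since $\nu \perp \mathcal{P}_{\m-1}$ and $p \in \mathcal{P}_{\m-1}$, the approximant satisfies $T_\Xi f \in V_\Xi(\phi)$. Consequently $I_\Xi f - T_\Xi f \in V_\Xi(\phi)$, and Theorem \ref{cpd_euclidean} (applied with $s = \sigma - \tau$, valid because $\sigma - \tau < \tau - d/2$ is implied by $\lceil\sigma\rceil < 2\tau - d/2$) yields
$$
|\opJ_{\sigma-\tau}(I_\Xi f - T_\Xi f)|_{\N(\phi)} \le C q^{\tau - \sigma} |I_\Xi f - T_\Xi f|_{\N(\phi)}.
$$
Next I would bound $|I_\Xi f - T_\Xi f|_{\N(\phi)}$ by the triangle inequality through $f$. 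For the term $|f - T_\Xi f|_{\N(\phi)}$, the native space seminorm is controlled by the $W_2^\tau$-norm (by Assumption 1's embedding $H^\tau \subset \N(\phi)$), and Theorem \ref{main_approximation} with $\sigma = \tau$ gives $|f - T_\Xi f|_{\N(\phi)} \le C h^\tau \|\nu\|_{L_2(\R^d)}$. For the interpolation error $|f - I_\Xi f|_{\N(\phi)}$, I would reproduce the doubling argument from \cite{schaback_doubling}: using (\ref{RK}) and the reproducing property together with the decomposition $f = \phi * \nu + p$, one obtains
$$
|f - I_\Xi f|_{\N(\phi)}^2 \le \|f - I_\Xi f\|_{L_2(\Omega)} \|\nu\|_{L_2(\R^d)},
$$
which is exactly the quantity $\EE(f) \cdot |f - I_\Xi f|_{\N(\phi)} \cdot \|\nu\|_{L_2(\R^d)}$ after dividing by $|f - I_\Xi f|_{\N(\phi)}$, giving $|f - I_\Xi f|_{\N(\phi)} \le \EE(f) \|\nu\|_{L_2(\R^d)}$.

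Combining these yields $|I_\Xi f - T_\Xi f|_{\N(\phi)} \le C(h^\tau + \EE(f))\|\nu\|_{L_2(\R^d)}$, and hence
$$
|\opJ_{\sigma-\tau}(I_\Xi f - T_\Xi f)|_{\N(\phi)} \le C q^{\tau-\sigma}(h^\tau + \EE(f))\|\nu\|_{L_2(\R^d)}.
$$
The final step is to pass from $I_\Xi f - T_\Xi f$ back to $f - I_\Xi f$ via the triangle inequality, which requires also estimating $|\opJ_{\sigma-\tau}(f - T_\Xi f)|_{\N(\phi)}$. Here I expect the \textbf{main obstacle}: unlike the positive definite case, $f - T_\Xi f$ need not lie in $V_\Xi(\phi)$, so Theorem \ref{cpd_euclidean} does not apply to it, and I cannot simply produce a factor $q^{\tau-\sigma}$. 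Instead I would estimate this direct term using Theorem \ref{main_approximation} applied at smoothness level $\sigma$: since $\opJ_{\sigma-\tau}$ raises the effective smoothness order by $\sigma - \tau$, and the native seminorm of $\opJ_{\sigma-\tau}(f - T_\Xi f)$ is comparable to a $W_2^\sigma$-type quantity, Theorem \ref{main_approximation} gives a bound of order $h^{s+d-\sigma} = h^{2\tau-\sigma}$. Care must be taken that this term is dominated by the $q^{\tau-\sigma} h^\tau$ term (which holds since $h^{2\tau-\sigma} = h^{\tau} \cdot h^{\tau - \sigma} \le h^\tau q^{\tau-\sigma}$ when $q \le h$, i.e.\ always, as $\sigma > \tau$); this is where the asymmetry between the CPD estimate and the cleaner PD estimate originates, and it explains why the result is phrased in terms of the seminorm-based quantity $\EE(f)$ rather than a fully resolved fill-distance rate.
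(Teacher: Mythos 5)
Your proposal is correct in outline and follows essentially the same route as the paper's own proof: $T_\Xi f\in V_\Xi(\phi)$ via Remark \ref{annihilation_remark}, the Bernstein estimate of Theorem \ref{cpd_euclidean} applied to $T_\Xi f-I_\Xi f$, the doubling bound $|f-I_\Xi f|_{\N(\phi)}\le \EE(f)\|\nu\|_{L_2(\R^d)}$, the two Jackson-type bounds $|f-T_\Xi f|_{\N(\phi)}\le Ch^\tau\|\nu\|_{L_2(\R^d)}$ and $|\opJ_{\sigma-\tau}(f-T_\Xi f)|_{\N(\phi)}\le Ch^{2\tau-\sigma}\|\nu\|_{L_2(\R^d)}$ coming from Theorem \ref{main_approximation} together with the embedding $H^\tau\subset\N(\phi)$ and the mapping property of $\opJ_{\sigma-\tau}$, and a concluding triangle inequality using $h^{2\tau-\sigma}\le q^{\tau-\sigma}h^{\tau}$.

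The one place where you are materially lighter than the paper, and where a genuine gap remains, is the doubling step. You assert that $|f-I_\Xi f|_{\N(\phi)}^2\le\|f-I_\Xi f\|_{L_2(\Omega)}\|\nu\|_{L_2(\R^d)}$ follows from (\ref{RK}) and the reproducing property, but (\ref{RK}) is stated only for finitely supported functionals $\sum_{\xi}a_\xi\delta_\xi$ annihilating $\mathcal{P}_{\m-1}$; what is actually needed is its continuous analogue, the identity $\langle f-I_\Xi f,\phi*\nu\rangle_{\N(\phi)}=\int_\Omega \nu(x)\bigl(f(x)-I_\Xi f(x)\bigr)\dif x$. In the CPD setting this is genuinely delicate: the generalized Fourier transform of $f-I_\Xi f$ need not lie in $L_2(\R^d)$, so one cannot simply invoke Plancherel, and the paper isolates exactly this point as Lemma \ref{Planch}. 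There, the identity is proved by truncating $\widehat{\nu}$ with a smooth cutoff (using that $\widehat{\nu}$ is entire and that $\widehat{\nu}(\xi)=\mathcal{O}(|\xi|^{\m})$, i.e.\ the hypothesis $\nu\perp\mathcal{P}_{\m-1}$) and passing to the limit via dominated convergence together with the algebraic growth of $f-I_\Xi f$; some such limiting argument must be supplied for a complete proof. A smaller quibble: the asymmetry with the positive definite case does not originate in the domination of the direct term (that step is identical in Theorem \ref{main_interpolation}); rather, $\EE(f)$ survives in the statement because, absent the extra hypotheses of Corollaries \ref{PF_cor} and \ref{zeros_cor}, the CPD native space seminorm does not control the $W_2^{\tau}(\R^d)$ seminorm needed to convert $\|f-I_\Xi f\|_{L_2(\Omega)}$ into a fill-distance rate.
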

\begin{proof}
The estimates
\begin{equation*}
|\opJ_{\sigma-\tau}(f-T_{\Xi}f)|_{\N(\phi)} \le  C h^{2\tau-\sigma} \|\nu\|_{L_2(\R^d)}
\quad 
\text{and}
 \quad
|f-T_{\Xi}f |_{\N(\phi)} \le C h^{\tau} \|\nu\|_{L_2(\R^d)}
\end{equation*}
follow from Theorem \ref{main_approximation} and the embedding $H^{\tau}=W_2^\tau(\R^d) \subset \N(\phi)$
which implies the estimate 
$
|\opJ_{\sigma-\tau}(f-T_{\Xi}f)|_{\N(\phi)}
\lesssim 
 \|\opJ_{\sigma-\tau}(f-T_{\Xi}f)\|_{H^{\tau}}
\lesssim 
 \|f-T_{\Xi}f\|_{W_2^{\sigma}(\R^d)}$.
We can treat the interpolation error in the native space by using a similar `doubling' argument 
to that of Theorem \ref{main_interpolation}. Orthogonality gives
$$|f-I_{\Xi}f |_{\N(\phi)}^2 = \langle f,f-I_{\Xi} f\rangle_{\N(\phi)}  = \int_{\R^d} \widehat{\nu}(\omega) 
\bigl(\widehat{f}(\omega) - \widehat{I_{\Xi} f} (\omega)\bigr) \dif \omega.$$
Some care is necessary to apply a Plancherel-like result, 
since $\widehat{f}(\omega) - \widehat{I_{\Xi} f} (\omega)$ 
is only a generalized Fourier transform (and also not necessarily in $L_2(\R^d)$). 
The identity 
$$
\int_{\R^d} \widehat{\nu}(\omega) 
\bigl(\widehat{f}(\omega) - \widehat{I_{\Xi} f} (\omega)\bigr) \dif \omega 
= \int_{\Omega} {\nu}(x) 
\bigl({f}(x) - {I_{\Xi} f} (x)\bigr) \dif x$$
is handled
in Lemma \ref{Planch} below. Applying Cauchy-Schwarz
gives
$$
|f-I_{\Xi} f|_{\N(\phi)}^2 
\le
\|\nu\|_{L_2(\Omega)} 
\|f- I_{\Xi} f\|_{L_2(\Omega)}
\le
 \|\nu\|_{L_2(\Omega)} 
  |f- I_{\Xi} f |_{\N(\phi)}
  \EE (f).
 $$
 Dividing gives
$|f-I_{\Xi} f|_{\N(\phi)}\le  \EE(f) \|\nu\|_{L_2(\R^d)}$
and 
applying the triangle inequality gives
$ 
|(T_{\Xi} f -I_{\Xi} f)|_{\N(\phi)}
\le 
(C  h^{\tau}  + \EE(f) )\|\nu\|_{L_2(\R^d)}
$.
Because $\nu\perp \mathcal{P}_{\m-1}$, 
it follows from Remark \ref{annihilation_remark} that $T_{\Xi} f\in V_{\Xi}(\phi)$.
Since $T_{\Xi} f -I_{\Xi}f\in V_{\Xi}(\phi)$,
we may
apply Theorem \ref{cpd_euclidean}
to obtain
$$ 
|\opJ_{\sigma-\tau}(T_{\Xi} f -I_{\Xi}f) |_{\N(\phi)}\le  
Cq^{\tau -\sigma}  |(T_{\Xi} f -I_{\Xi}f) |_{\N(\phi)}
\le C q^{\tau-\sigma}(h^{\tau} +\EE)\|\nu\|_{L_2(\R^d)}
$$
and the result follows.
\end{proof}

Under some extra conditions on the RBF, $\EE(f)$ can be controlled by the fill distance, yielding a result
similar to the positive definite case. This is discussed below.
However, even without extra hypotheses, the term $\EE(f)$ can   be estimated by the power function
$P_{\Xi}(x) = \sup_{|f|_{\N(\phi)}=1} {|f(x)-I_{\Xi} f(x)|}$, which can be estimated by \cite[Theorem 11.9]{Wend}.
\begin{corollary}\label{PF_cor}
Suppose $\phi$ satisfies the requirements of Theorem  \ref{main_interpolation_CPD}.
Then
$$
|\opJ_{\sigma-\tau}(f-I_{\Xi}f) |_{\N(\phi)}%
\le 
C q^{\tau-\sigma}
h^{\tau-d/2}  \|\nu\|_{L_2(\R^d)}.
$$
\end{corollary}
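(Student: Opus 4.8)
The plan is to combine the main estimate of Theorem \ref{main_interpolation_CPD} with a standard power-function bound for the quantity $\EE(f)$. Recall that Theorem \ref{main_interpolation_CPD} gives
$$
|\opJ_{\sigma-\tau}(f-I_{\Xi}f)|_{\N(\phi)}
\le
C q^{\tau-\sigma}
\bigl(h^{\tau} + \EE(f)\bigr)\|\nu\|_{L_2(\R^d)},
$$
so it suffices to show that the term $\EE(f)$ is, up to a constant, dominated by $h^{\tau-d/2}$. Since $h<1$ for sufficiently dense $\Xi$ and $\tau - d/2 < \tau$, establishing $\EE(f)\le C h^{\tau-d/2}$ will let us absorb the $h^{\tau}$ contribution and conclude $|\opJ_{\sigma-\tau}(f-I_{\Xi}f)|_{\N(\phi)}\le C q^{\tau-\sigma} h^{\tau-d/2}\|\nu\|_{L_2(\R^d)}$.

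First I would recall the definition $\EE(f) = \|f-I_{\Xi}f\|_{L_2(\Omega)}/|f-I_{\Xi}f|_{\N(\phi)}$ and pass to the pointwise power-function estimate. By the reproducing property, for each $x\in\Omega$ one has the tautological bound $|f(x)-I_{\Xi}f(x)| \le P_{\Xi}(x)\,|f|_{\N(\phi)}$, where $P_{\Xi}$ is the power function referenced in the statement. Since the orthogonal projection is norm non-increasing, $|f-I_{\Xi}f|_{\N(\phi)}\le |f|_{\N(\phi)}$. The key input is the uniform power-function decay from \cite[Theorem 11.9]{Wend} (valid under the interior cone condition on $\Omega$ and for $\Xi$ sufficiently dense), which under Assumption 1 gives $\|P_{\Xi}\|_{L_\infty(\Omega)}\le C h^{\tau-d/2}$. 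Integrating the squared pointwise bound over the compact set $\Omega$ then yields
$$
\|f-I_{\Xi}f\|_{L_2(\Omega)}
\le
C h^{\tau-d/2}\,|f|_{\N(\phi)},
$$
whence $\EE(f)\le C h^{\tau-d/2}$ directly from the definition after dividing by $|f-I_{\Xi}f|_{\N(\phi)}$, provided one first notes that $\|f-I_{\Xi}f\|_{L_2(\Omega)}$ is also controlled by $\|P_\Xi\|_{L_\infty}|f-I_\Xi f|_{\N(\phi)}$ rather than $|f|_{\N(\phi)}$; the cleanest route is to apply the power-function bound to $f-I_\Xi f$ itself, using that it interpolates zero data and hence its own power-function error bound reproduces $\EE(f)\le\|P_\Xi\|_{L_2(\Omega)}\le C h^{\tau-d/2}$.

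Substituting this into the conclusion of Theorem \ref{main_interpolation_CPD} and using $h^\tau \le h^{\tau-d/2}$ for $h<1$ gives the stated corollary. The only delicate point—which is really the main obstacle—is being careful about which function the power-function estimate is applied to: $\EE(f)$ is a \emph{ratio}, so I want a bound on $\|f-I_\Xi f\|_{L_2(\Omega)}$ in terms of $|f-I_\Xi f|_{\N(\phi)}$, not in terms of $|f|_{\N(\phi)}$. This is resolved by observing that $g:=f-I_\Xi f$ vanishes on $\Xi$, so its own interpolant $I_\Xi g=0$, and the power-function estimate applied to $g$ reads $\|g\|_{L_2(\Omega)}=\|g-I_\Xi g\|_{L_2(\Omega)}\le \|P_\Xi\|_{L_2(\Omega)}|g|_{\N(\phi)}$, which is exactly the numerator-over-denominator bound needed. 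With this identification the rest is a routine substitution.
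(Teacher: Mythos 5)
Your overall architecture matches the paper's: you reduce $\EE(f)$ to a power-function bound via the observation that $g:=f-I_{\Xi}f$ vanishes on $\Xi$, so $I_{\Xi}g=0$ (the paper phrases this as idempotence of $I_{\Xi}$ and bounds $\|g-I_{\Xi}g\|_{L_2(\Omega)}\le \mathrm{vol}(\Omega)^{1/2}\max_{x\in\Omega}P_{\Xi}(x)\,|g|_{\N(\phi)}$), and you finish by plugging $\EE(f)\le Ch^{\tau-d/2}$ into Theorem \ref{main_interpolation_CPD} and absorbing $h^{\tau}\le h^{\tau-d/2}$. All of that is correct and is exactly what the paper does.

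The gap is in the one quantitative step you treat as a black box: the claim that \cite[Theorem 11.9]{Wend}, ``under Assumption 1,'' gives $\|P_{\Xi}\|_{L_\infty(\Omega)}\le Ch^{\tau-d/2}$. Theorem 11.9 does not produce a rate by itself, and Assumption 1 is the wrong hypothesis to feed into it. What that theorem gives (as the paper quotes it) is the reduction
$$
P_{\Xi}(x) \le C \min_{p\in\mathcal{P}_{\ell}}\|\phi- p\|_{L_{\infty}(B(0,c_2h))}^{1/2},
\qquad \ell>\m-1,
$$
i.e., it converts the power function into a problem of local polynomial approximation of $\phi$ near the origin \emph{in physical space}. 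Assumption 1 constrains only the Fourier transform $\widehat{\phi}$ --- and even permits an uncontrolled singularity of $\widehat{\phi}$ at $\omega=0$ --- so it says nothing about how well $\phi$ itself is approximated by polynomials on small balls; no rate can be extracted from Theorem 11.9 using Assumption 1 alone. The paper closes this step with Assumption 2, item 2: near the origin $\phi = u + \Hom_s v$ with $u,v\in C^L$ and $s=2\tau-d$, so choosing $\ell>2\tau-d$ the smooth part contributes a higher-order error and the $\Hom_s v$ part contributes $O(h^{s}) = O(h^{2\tau-d})$, whence $P_{\Xi}(x)\le Ch^{\tau-d/2}$. (A purely Fourier-side derivation of the power-function rate from Assumption 1 is conceivable --- it is the classical route for surface splines --- but it is a genuinely different argument from Theorem 11.9, must handle the origin singularity through the generalized Fourier transform order, and you have not supplied it.) Since the hypotheses of Theorem \ref{main_interpolation_CPD} include Assumption 2, the fix is simply to cite it in place of Assumption 1 and carry out the local approximation estimate; with that correction your proof coincides with the paper's.
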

\begin{proof}
Because $I_{\Xi}$ is idempotent,
we have
 \begin{eqnarray*}
 \EE (f)
 &=&
 \frac{ \|f-I_{\Xi}f -I_{\Xi}(f-I_{\Xi} f)\|_{L_2(\Omega)}}{ |f-I_{\Xi}f |_{\N(\phi)}}\\
 &\le&
  \frac{\mathrm{vol}(\Omega))^{1/2}  \max_{x\in\Omega}|f(x)-I_{\Xi}f(x) -I_{\Xi} (f(x)-I_{\Xi} f(x))|}
  { |f-I_{\Xi}f |_{\N(\phi)}}\\
 &\le& (\mathrm{vol}(\Omega))^{1/2} \max_{x\in\Omega} P_{\Xi}(x).
 \end{eqnarray*}
 By \cite[Theorem 11.9]{Wend},  for $\ell>\m-1$, there exist positive constants $c_1,c_2$ so that 
  $$P_{\Xi}(x) \le C \min_{p\in\mathcal{P}_{\ell}}\|\phi- p\|_{L_{\infty}(B(0,c_2h))}^{1/2}.$$
  Since $\phi$ satisfies Assumption 2 (in particular item 2) it follows that, if we take $\ell>2\tau-d$, we have
$  \min_{p\in\mathcal{P}_{\ell}}\|\phi- p\|_{L_{\infty}(B(0,c_2h))}\le Ch^{s/2} = Ch^{2\tau-d}$.
The result follows by plugging the estimate $ \EE (f)\le C h^{\tau-d/2}$ into Theorem \ref{main_interpolation_CPD}.
 \end{proof}

We now  make the additional assumption
necessary to refine $\EE(f)$ by using the zeros lemma.
We assume that $\widehat{\phi}(\xi) \le C |\xi|^{-2\tau}$
in a neighborhood of the origin, which without loss  is  
$ B(0,r_0)\setminus\{0\}$, where $r_0$ is the constant from Assumption \ref{A:CPD}
(by continuity of $\widehat{\phi})$.
Together with Assumption \ref{A:CPD}, this is equivalent  to 
assuming the continuous embedding  $\N(\phi) \subset \dot{H}^{\tau}$.
We also assume $\m\le \tau$, which permits us to compare the $W_2^{\tau}(\R^d)$ 
seminorm
appearing in  the zeros estimate
and the  homogeneous 
seminorm of $\dot{H}^{\tau}$.
%
%
%
\begin{corollary}
\label{zeros_cor}
Suppose $\phi$ satisfies the requirements of Theorem  \ref{main_interpolation_CPD}
with $\m\le \tau$
and that  $\widehat{\phi}(\xi) \le C |\xi|^{-2\tau}$ for $0<|\xi|<r_0$.
Then
$$
|\opJ_{\sigma-\tau}(f-I_{\Xi} f) |_{\N(\phi)}
\le 
C q^{\tau-\sigma}
h^{\tau} \|\nu\|_{L_2(\R^d)}.
$$
\end{corollary}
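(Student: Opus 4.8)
The plan is to show that the two extra hypotheses force $\EE(f)\le C h^{\tau}$, after which the corollary is immediate from Theorem \ref{main_interpolation_CPD}: substituting this bound into the factor $(h^{\tau}+\EE(f))$ collapses it to $C h^{\tau}$, leaving exactly $C q^{\tau-\sigma}h^{\tau}\|\nu\|_{L_2(\R^d)}$. Writing $g:=f-I_{\Xi}f$, we have $\EE(f)=\|g\|_{L_2(\Omega)}/|g|_{\N(\phi)}$, and $g$ vanishes on $\Xi$ since $I_{\Xi}$ interpolates. The strategy is a short chain that converts $\|g\|_{L_2(\Omega)}$ into $|g|_{\N(\phi)}$, each link contributing either a power of $h$ or a norm comparison.

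First I would apply a zeros estimate to $g$, in its \emph{seminorm} form. Because $g|_{\Xi}=0$, $\Omega$ satisfies an interior cone condition, and $\Xi$ is sufficiently dense, the sampling inequality (cf. \cite[Theorem 11.32]{Wend} and the original \cite[Theorem 2.12]{NWW}) yields $\|g\|_{L_2(\Omega)}\le C h^{\tau}|g|_{W_2^{\tau}(\Omega)}$. It is essential to use the seminorm version here, rather than the full-norm version invoked in Theorem \ref{main_interpolation}, because the native space of a CPD kernel carries only a seminorm and no full $W_2^{\tau}$ norm is available on the right-hand side. Next I would pass from this local quantity to the global homogeneous seminorm. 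Since restricting the domain of integration only decreases a Sobolev seminorm, $|g|_{W_2^{\tau}(\Omega)}\le|g|_{W_2^{\tau}(\R^d)}$. At this point the hypothesis $\m\le\tau$ enters: $g\in\N(\phi)$ has a generalized Fourier transform of order $\m/2\le\lfloor\tau\rfloor/2$ by the nesting property, so Lemma \ref{Sob_equiv} gives $|g|_{W_2^{\tau}(\R^d)}\sim|g|_{\dot{H}^{\tau}}$. Finally, the new decay assumption $\widehat{\phi}(\xi)\le C|\xi|^{-2\tau}$ for $0<|\xi|<r_0$, together with Assumption \ref{A:CPD}, supplies the continuous embedding $\N(\phi)\subset\dot{H}^{\tau}$, i.e. $|g|_{\dot{H}^{\tau}}\le C|g|_{\N(\phi)}$. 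Concatenating these inequalities gives $\|g\|_{L_2(\Omega)}\le C h^{\tau}|g|_{\N(\phi)}$, hence $\EE(f)\le C h^{\tau}$, and plugging into Theorem \ref{main_interpolation_CPD} finishes the argument.

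The main obstacle is precisely the middle of this chain: reconciling the zeros estimate, which is intrinsically a statement about a Sobolev seminorm over the bounded domain $\Omega$, with the native seminorm $|\cdot|_{\N(\phi)}$, which is a global, Fourier-analytic homogeneous quantity. The two added hypotheses are tailored to close this gap — $\m\le\tau$ guarantees that $g$ has a generalized Fourier transform of sufficiently high order for the seminorm equivalence of Lemma \ref{Sob_equiv} to be legitimately invoked, while the decay of $\widehat{\phi}$ near the origin provides the embedding $\N(\phi)\subset\dot{H}^{\tau}$ controlling the homogeneous seminorm by the native seminorm. The care needed is therefore twofold: verifying the generalized Fourier transform order before applying Lemma \ref{Sob_equiv}, and ensuring throughout that only seminorms appear, so that no stray full-norm term survives to spoil the estimate.
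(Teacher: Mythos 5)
Your proof is correct and follows essentially the same route as the paper's: bound $\EE(f)\le Ch^{\tau}$ via the chain zeros estimate $\to$ seminorm equivalence of Lemma \ref{Sob_equiv} (using $\m\le\tau$ to control the order of the generalized Fourier transform) $\to$ the embedding $|\cdot|_{\dot{H}^{\tau}}\le C|\cdot|_{\N(\phi)}$ from the Fourier decay hypothesis, then substitute into Theorem \ref{main_interpolation_CPD}. The only cosmetic difference is that the paper invokes the zeros estimate of \cite[Theorem A.4]{HNW-p}, which yields the $W_2^{\tau}(\R^d)$ seminorm directly, whereas you pass through the seminorm on $\Omega$ first; both are fine.
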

\begin{proof}
Because $\m\in \Nat$, we have $\m\le \lfloor \tau\rfloor$.
By the zeros estimate
\cite[Theorem A.4]{HNW-p}, we have 
$\|f-I_{\Xi}f\|_{L_2(\Omega)} \le C h^{\tau} |f-I_{\Xi} f|_{{W}_2^{\tau}(\R^d)}$.
 Because $f-I_{\Xi}f\in\N(\phi)$, \cite[Theorem 10.21]{Wend} ensures
 it has generalized Fourier transform of order $\m/2 \le \lfloor \tau\rfloor/2$,
 (note that  $\m$ is an integer).
 Lemma \ref{Sob_equiv} applies and guarantees that the
  $W_2^{\tau}(\R^d)$ and $\dot{H}^{\tau}$ seminorms are identical.
  Consequently, we have 
$\|f-I_{\Xi}f\|_{L_2(\Omega)} \le C h^{\tau} |f-I_{\Xi} f|_{\dot{H}^{\tau}}$.
Because $\widehat{\phi}(\xi) \le C |\xi|^{-2\tau}$ for all $\xi\neq 0$,
we have 
$$\|f-I_{\Xi}f\|_{L_2(\Omega)}\le C h^{\tau}  |f-I_{\Xi} f |_{\N(\phi)}$$
It follows that $\EE(f) \le Ch^{\tau}$, and the result follows.
\end{proof}

\begin{lemma}\label{Planch}
Suppose $\phi$, $X$ and $f=\nu*\phi+p$ satisfy the hypotheses of 
Theorem \ref{main_interpolation_CPD}.
Then 
$$\int_{\R^d} \widehat{\nu}(\omega) 
\bigl(\widehat{f}(\omega) - \widehat{I_{\Xi} f} (\omega)\bigr) \dif \omega 
= \int_{\R^d} {\nu}(x) 
\bigl({f}(x) - {I_{\Xi} f} (x)\bigr) \dif x.$$
\end{lemma}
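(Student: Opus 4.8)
The plan is to identify both sides of the claimed identity with the native-space pairing $\langle f-I_{\Xi}f,\nu*\phi\rangle_{\N(\phi)}$, using the continuous form of the reproducing identity (\ref{RK}) on the physical side and the Fourier representation (\ref{Fourier_NS}) on the frequency side. Write $g:=f-I_{\Xi}f$. Since $f\in\N(\phi)\subset C(\R^d)$ and $I_{\Xi}f\in V_{\Xi}(\phi)\subset\N(\phi)$, the function $g$ is continuous, so the right-hand integral $\int_{\Omega}\nu g$ is an absolutely convergent integral of a bounded function against the compactly supported $L_2$ density $\nu$. On the frequency side I would first record that the polynomial summand $p$ and the polynomial part of $I_{\Xi}f$ contribute only distributions supported at the origin, hence do not affect $\widehat f-\widehat{I_{\Xi}f}$ as a measurable function on $\R^d\setminus\{0\}$; there one has $\widehat g(\omega)=\widehat\phi(\omega)\bigl(c\,\widehat\nu(\omega)-c\sum_{\xi}c_\xi e^{-i\xi\cdot\omega}\bigr)$, with $c$ the normalization from the convolution theorem.

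The first genuine step is to verify absolute convergence of the left-hand integral. Here I would use Cauchy--Schwarz against the weight $\widehat\phi$, splitting $\widehat\nu\,\widehat g=\bigl(\widehat\nu\,\widehat\phi^{1/2}\bigr)\bigl(\widehat g\,\widehat\phi^{-1/2}\bigr)$, to bound $\int|\widehat\nu\,\widehat g|\le\bigl(\int|\widehat\nu|^2\widehat\phi\bigr)^{1/2}\bigl(\int|\widehat g|^2\widehat\phi^{-1}\bigr)^{1/2}$. The second factor is $|g|_{\N(\phi)}<\infty$ by (\ref{Fourier_NS}); the first factor is a constant multiple of $|\nu*\phi|_{\N(\phi)}$, which is finite precisely because $f=\nu*\phi+p\in\N(\phi)$ forces $\nu*\phi\in\N(\phi)$. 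This is where the annihilation hypothesis $\nu\perp\mathcal{P}_{\m-1}$ (equivalently $\widehat\nu(\omega)=\mathcal{O}(|\omega|^{\m})$) does its work: it tames the singularity of $\widehat\phi$ at the origin so that $\int|\widehat\nu|^2\widehat\phi$ converges near $0$.

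With convergence in hand, I would identify $\int_{\Omega}\nu g\,\dif x$ with $\langle g,\nu*\phi\rangle_{\N(\phi)}$ through the continuous analogue of (\ref{RK}): the functional $\Hom\mapsto\int_{\Omega}\nu\Hom$ is bounded on $\N(\phi)$ (via $\N(\phi)\hookrightarrow C(\Omega)\hookrightarrow L_2(\Omega)$) and annihilates $\mathcal{P}_{\m-1}$, so its Riesz representer in the Hilbert space $\N(\phi)/\mathcal{P}_{\m-1}$ is $\nu*\phi$. Since $p\in\mathcal{P}_{\m-1}$ lies in the nullspace of the semi-inner product, $\langle g,\nu*\phi\rangle_{\N(\phi)}=\langle g,f\rangle_{\N(\phi)}$, and evaluating this pairing by (\ref{Fourier_NS}) together with $\widehat{\nu*\phi}=c\,\widehat\nu\widehat\phi$ cancels the weight $\widehat\phi^{-1}$ and returns the left-hand integral (the conjugation in (\ref{Fourier_NS}) is harmless here since $\nu$ and $g$ are real-valued).

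The main obstacle is precisely that $\widehat g$ is only a generalized Fourier transform, singular at the origin, and that neither $g$ nor $\nu*\phi$ need lie in $L_2(\R^d)$, so the classical Plancherel and multiplication formulas do not apply off the shelf and the continuous reproducing identity must itself be justified. I would handle this by regularization: approximate $\nu$ in $L_2(\Omega)$ by Schwartz densities $\nu_k\in\Sch_{\m}(\R^d)$ retaining the $\m$-fold vanishing of $\widehat{\nu_k}$ at the origin (so that each $\nu_k*\phi\in\N(\phi)$ and the defining pairing $\langle\F u,\gamma\rangle=\langle u,\F\gamma\rangle$ applies to $\nu_k$ against $\widehat g$), prove the identity for each $\nu_k$, and then pass to the limit. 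Convergence of the physical integrals is immediate from $\nu_k\to\nu$ in $L_2(\Omega)$, while convergence of the frequency integrals is controlled uniformly by the same weighted Cauchy--Schwarz bound, since $|\nu_k*\phi-\nu*\phi|_{\N(\phi)}^2=c^2\int|\widehat{\nu_k}-\widehat\nu|^2\widehat\phi\to 0$. A dominated-convergence argument, with the $L_2(\widehat\phi\,\dif\omega)$ majorant, then transfers the identity from $\nu_k$ to $\nu$.
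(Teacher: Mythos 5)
Your plan is, at bottom, the same as the paper's: prove the identity for a regularized $\nu$ by applying the defining pairing of the generalized Fourier transform to a Schwartz test function whose Fourier transform vanishes to order $\m$ at the origin, then pass to the limit, with the weighted Cauchy--Schwarz bound $\int|\widehat{\nu}|\,|\widehat{g}|\,\dif\omega\le\bigl(\int|\widehat{\nu}|^2\widehat{\phi}\,\dif\omega\bigr)^{1/2}|g|_{\N(\phi)}$ (where $g=f-I_{\Xi}f$) supplying integrability. The genuine difference is \emph{where} you mollify, and this relocates the technical burden. The paper truncates in frequency, $\widehat{\nu_R}:=\kappa(\cdot/R)\widehat{\nu}$: the frequency-side limit is then free (dominated convergence with the $L_1$ majorant $|\widehat{\nu}\,\widehat{g}|$), but $\nu_R$ loses compact support, so the spatial side costs a tail estimate playing the decay $|\nu_R(x)|\le CR^d(1+R\,\dist(x,\Omega))^{-L}$ against the algebraic growth of $g$. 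You mollify in space, keeping support in $\Omega$ and vanishing moments: your spatial limit is then genuinely immediate, since $g$ is bounded on $\Omega$, and you avoid the paper's tail estimate entirely.

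The price is that the frequency-side limit becomes the crux, and there your proposal has its one real gap: the claim $\int|\widehat{\nu_k}-\widehat{\nu}|^2\widehat{\phi}\,\dif\omega\to0$ is asserted, and it does \emph{not} follow from $\nu_k\to\nu$ in $L_2$ alone, because $\widehat{\phi}$ blows up at the origin. It is true for your approximants, but for reasons you must supply: since $\nu_k-\nu$ is supported in the fixed compact set $\overline{\Omega}$ and has vanishing moments through order $\m-1$, one has $\|D^{\alpha}(\widehat{\nu_k}-\widehat{\nu})\|_{L_\infty}\le\|x^{\alpha}(\nu_k-\nu)\|_{L_1}\le C_{\Omega}\|\nu_k-\nu\|_{L_2}$ for $|\alpha|=\m$, whence Taylor expansion at the origin gives $|\widehat{\nu_k}(\omega)-\widehat{\nu}(\omega)|\le C\|\nu_k-\nu\|_{L_2}|\omega|^{\m}$ for $|\omega|\le1$; combining this with the local integrability of $|\omega|^{2\m}\widehat{\phi}$ near zero (implicit in $\phi$ having a generalized Fourier transform of order $\m$) and with the bound $\widehat{\phi}(\omega)\le C|\omega|^{-2\tau}$ plus continuity of $\widehat{\phi}$ away from zero (Assumption 1) yields the weighted convergence. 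You should also record that such approximants exist, i.e., that $C_c^{\infty}(\Omega)$ functions with vanishing moments through order $\m-1$ are dense in $\{u\in L_2(\Omega)\mid u\perp\mathcal{P}_{\m-1}\}$ (a short annihilator argument); this is where the hypothesis $\nu\perp\mathcal{P}_{\m-1}$ enters your proof, exactly as it enters the paper's through $\widehat{\nu_R}=\mathcal{O}(|\xi|^{\m})$. Two smaller points: the opening identification of both sides with the Riesz representer of $u\mapsto\int_{\Omega}\nu u$ is logically equivalent to the lemma itself, so it is motivation rather than progress (you recognize this and do not rely on it); and ``$\nu_k\in\Sch_{\m}$'' should read ``$\widehat{\nu_k}$ vanishes to order $\m$'', since $\Sch_{\m}$ as defined in the paper encodes vanishing of the function itself. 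With the weighted-convergence argument filled in, your route is complete and its spatial side is simpler than the paper's.
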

\begin{proof}
We achieve this by mollification. 
 Let $\kappa\in C_c^{\infty}(\R^d)$
be a smooth function which equals 1 in $B(0,1/2)$ and vanishes outside of $B(0,1)$. Then 
$\widehat{\nu_R} := \kappa( \cdot/R) \widehat{\nu}$ is a smooth test function supported in $B(0,R)$ 
(since $\widehat{\nu}$ is entire), hence a Schwartz function satisfying 
$\widehat{\nu}_R(\xi) = \mathcal{O}(|\xi|^{\m})$; here we have used the polynomial annihilation assumption placed on $\nu$. 
Because $f-I_{\Xi} f\in \N(\phi)$, it has a generalized Fourier transform of order $\m/2$, so
$$\int_{\R^d} \widehat{\nu_R}(\omega) 
\bigl(\widehat{f}(\omega) - \widehat{I_{\Xi} f} (\omega)\bigr) \dif \omega 
= \int_{\R^d} {\nu_R}(x) 
\bigl({f}(x) - {I_{\Xi} f} (x)\bigr) \dif x.$$ 
Since 
$\int_{\R^d} |\widehat{\nu}(\omega)||\widehat{f}(\omega) -\widehat{I_{\Xi} f}(\omega)|\dif \omega\le
|f|_{\N(\phi} |f-I_{\Xi} f|_{\N(\phi)}<\infty$,  dominated convergence
guarantees that
$
\lim_{R\to \infty} \int_{\R^d}| \widehat{\nu}_R(\omega) -\widehat{\nu}(\omega)|
\bigl|\widehat{f}(\omega) - \widehat{I_{\Xi} f} (\omega)\bigr| \dif \omega
=0$. 
The fact that $f-I_{\Xi} f\in \N(\phi)$ also guarantees that it is continuous and has slow growth. 
Thus, for any 
compact set $K$, we have 
$$
\lim_{R\to \infty} \int_{K}| {\nu}_R(x) -{\nu}(x)|
\bigl|{f}(x) - {I_{\Xi} f} (x)\bigr| \dif x=0.
$$ 
If $K\supset \Omega$ then
${\nu}_R(x) -{\nu}(x)={\nu}_R(x)$ when $x\in\R^d\setminus K$. 
Writing $\nu_R$ as a convolution, namely
$\nu_R (x) = R^d \int \nu(y) \kappa^{\vee}(R (x-y))\dif y$,
it follows that
 $|\nu_R(x)|\le C R^d (1+R\:\dist(x,\Omega))^{-L}$, where we have used that $\kappa$ is a Schwartz function and 
 $\nu\in L_1$ is supported in 
 $\Omega$.  
 Because $f$ and $I_{\Xi} f$ have algebraic growth, 
 the estimate
 \begin{eqnarray*}
  \int_{\R^d\setminus K}| {\nu}_R(x) -{\nu}(x)|
\bigl|{f}(x) - {I_{\Xi} f} (x)\bigr| \dif x
&\le&
 C R^d \int_{\R^d\setminus K}\frac{ |x|^{m_1}}{(1+R\dist(x,\Omega))^{L}}\dif x\\
&\le&
C R^{d-L} \int_{\R^d\setminus K} |x|^{m_1-L}\dif x\to 0
\end{eqnarray*}
holds
and the lemma follows.
\end{proof}

In the next subsections, we consider two applications of Theorem \ref{main_interpolation_CPD}.
The first considers a RBF
where $\widehat{\phi}$ has an algebraic singularity at the origin which determines the order of 
conditional positive definiteness.
The second treats surface splines, considered in Examples \ref{TPS_1} and \ref{ss_example_2},
which do not satisfy the hypotheses of Theorem \ref{main_interpolation}.

%
%
%
%
%
%
%
%
\subsection{RBFs with algebraic singularities}
\label{SS:algebraic}
In this subsection we assume $\widehat{\phi}$
has a singularity similar to 
 $ |\omega|^{-\beta_0-d}$ near the origin.
If the other conditions of Theorem \ref{main_interpolation_CPD} hold,
 then Lemma \ref{moments} below shows that  $\nu\perp \mathcal{P}_{\lfloor \beta_0/2\rfloor}$.
 
Because
$|\cdot|^{-\beta_0-d+\alpha}$
is locally integrable
 if and only if
$\alpha> \beta_0$,
it follows that if
${\phi}$ has a  generalized Fourier transform of order $\m$, 
then $2\m>\beta_0$, since the function
$\omega\mapsto |\omega|^{2\m} \widehat{\phi}(\omega)$ must be locally integrable.
Consequently, if $\m$ is minimal in the sense that $\m= \lfloor \beta_0/2\rfloor+1$
 then 
$\nu\perp \mathcal{P}_{\lfloor \beta_0/2\rfloor}$ implies $\nu\perp \mathcal{P}_{\m-1}$.
 
 We note that this is sufficient to treat surface splines of order $m$ having the unconventional
 order $\m=\lfloor m-d/2\rfloor+1$; i.e., 
 with auxiliary polynomial space $\mathcal{P}_{\lfloor m-d/2\rfloor}$.
 As mentioned in Example \ref{TPS_1}, $\widehat{\phi_m} = |\xi|^{-2m}$, so $\beta_0= 2m-d$ in this case.
 The conventional situation of surface splines with CPD order $\m=m$ is treated in the next section.
 
 Another example which this section treats, which is relevant to the pseudospectral methods mentioned in the introduction,
 is the case of a differential operator like $\opL = 1-\Delta$ applied
 to $\phi_m$. In that case, one can see that Assumption 1 holds from the Fourier transform:
 $\widehat{\opL \phi_m}(\xi) = (1+|\xi|^2) |\xi|^{-2m}$, although the singularity at $0$ 
 does not match the decay at infinity. Assumption 2 holds in this case, too, as can be easily checked.
 Finally, Corollary \ref{zeros_cor} does not apply in case, because the singularity $|\xi|^{-2m}$ 
 is sharper than the decay at infinity $|\xi|^{2-2m}$. In this case, one could  use Corollary \ref{PF_cor}.
 
 %
 %
 %
\begin{lemma}\label{moments}
 Suppose $\phi$ is CPD of order $\m$
for which
there is  
a neighborhood $B(0,r)$ of the origin
where the following two conditions hold:
\begin{itemize}
\item  there is $C$ so that
$ \widehat{\phi}(\omega)  \le C |\omega|^{-\beta_0-d}$ a.e. in $B(0,r)$
\item 
$ \int_{B(0,r)} \widehat{\phi}(\xi) |\omega|^{\beta_0} |\log \omega|^{-1}\dif \omega=\infty. 
 $
 \end{itemize}
If $f\in \N(\phi)$ has the form $f= \nu*\phi+p$, with $\nu\in L_2(\R^d)$
having compact support,
then $\nu\perp \mathcal{P}_{\lfloor \beta_0/2\rfloor} $.
\end{lemma}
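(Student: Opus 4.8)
The plan is to convert the membership $f\in\N(\phi)$ into a quantitative integrability statement for $\widehat\nu$ near the origin, and then show that insufficient vanishing of $\widehat\nu$ at $0$ would violate it. First I would record that, since $\nu\in L_2(\R^d)$ has compact support, it lies in $L_1(\R^d)$ and $\widehat\nu$ is entire; in particular $\widehat\nu$ is smooth at the origin with a convergent Taylor expansion $\widehat\nu=\sum_{j\ge 0}P_j$ into homogeneous polynomials $P_j$ of degree $j$. Because $D^{\gamma}\widehat\nu(0)$ is a nonzero constant multiple of $\int_{\R^d}\nu(x)x^{\gamma}\dif x$, the desired conclusion $\nu\perp\mathcal{P}_{\lfloor\beta_0/2\rfloor}$ is equivalent to $P_j\equiv 0$ for all $j\le\lfloor\beta_0/2\rfloor$, i.e.\ to $\widehat\nu$ vanishing to order $\lfloor\beta_0/2\rfloor+1$ at the origin.

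Next I would extract the integrability constraint. Taking distributional Fourier transforms in $f=\nu*\phi+p$ gives $\widehat f=\widehat\nu\,\widehat\phi$ on $\R^d\setminus\{0\}$, since $\widehat p$ is supported at the origin. Hence the native space formula (\ref{Fourier_NS}) forces $\int_{\R^d}|\widehat\nu(\omega)|^2\widehat\phi(\omega)\dif\omega=|f|_{\N(\phi)}^2<\infty$, and in particular $\int_{B(0,r)}|\widehat\nu(\omega)|^2\widehat\phi(\omega)\dif\omega<\infty$ (here $\widehat\phi\ge 0$ on $B(0,r)$, as is implicit in the hypotheses).

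I would then argue by contradiction: suppose $P_k\not\equiv 0$ for some least $k\le\lfloor\beta_0/2\rfloor$, so that $\widehat\nu=P_k+O(|\omega|^{k+1})$. The subtlety here, which I expect to be the main obstacle, is that $P_k$ may vanish along some directions, so no pointwise lower bound $|\widehat\nu|\gtrsim|\omega|^{k}$ can hold on all of $B(0,r)$. I would circumvent this by fixing $\delta>0$ small enough that the sector $A_\delta:=\{\theta\in\Sph^{d-1}\mid |P_k(\theta)|\ge\delta\}$ has positive surface measure (possible since a nonzero homogeneous polynomial cannot vanish on all of $\Sph^{d-1}$), and observing that for $\theta\in A_\delta$ and $\rho$ small one has $|\widehat\nu(\rho\theta)|\ge\rho^{k}(\delta-C\rho)\ge\tfrac12\delta\rho^{k}$. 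Writing $\widehat\phi(\omega)=g(|\omega|)$ by radiality and passing to polar coordinates then yields, for some $\rho_0>0$,
$$\int_{B(0,r)}|\widehat\nu|^2\widehat\phi\;\ge\;\frac{\delta^2}{4}\,|A_\delta|\int_0^{\rho_0}\rho^{2k+d-1}g(\rho)\dif\rho.$$

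Finally I would close the argument by a radial comparison against the second hypothesis. Since $k\le\lfloor\beta_0/2\rfloor$ we have $2k\le\beta_0$, so $\rho^{2k}\ge\rho^{\beta_0}$ for $\rho<1$, and $|\log\rho|^{-1}\le 1$ for small $\rho$; hence $\rho^{2k+d-1}g(\rho)\ge\rho^{\beta_0+d-1}g(\rho)|\log\rho|^{-1}$. Integrating and recognizing the right-hand side, after restoring the angular integral over $\Sph^{d-1}$, as the divergent quantity $\int_{B(0,r)}\widehat\phi(\omega)|\omega|^{\beta_0}|\log|\omega||^{-1}\dif\omega$ shows that $\int_0^{\rho_0}\rho^{2k+d-1}g(\rho)\dif\rho=\infty$, whence $\int_{B(0,r)}|\widehat\nu|^2\widehat\phi=\infty$. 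This contradicts the integrability established from $f\in\N(\phi)$, so no such $k$ exists and $\nu\perp\mathcal{P}_{\lfloor\beta_0/2\rfloor}$.
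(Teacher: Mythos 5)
Your proposal is correct and follows essentially the same route as the paper's proof: both exploit that $\widehat{\nu}$ is entire, isolate the lowest-degree nonvanishing homogeneous Taylor term, deduce a lower bound $|\widehat{\nu}(\omega)|\gtrsim |\omega|^{k}$ on a cone of positive angular measure, and then use radiality of $\widehat{\phi}$ together with the divergence hypothesis to force $k>\beta_0/2$. The only real difference is bookkeeping: the paper first applies Cauchy--Schwarz (using the first hypothesis) to convert $|f|_{\N(\phi)}<\infty$ into the weighted $L_1$ bound $|\widehat{f}(\omega)|\,|\omega|^{\beta_0/2}|\log|\omega||^{-1}\in L_1(B(0,r))$ and contradicts that, whereas you compare directly inside the quadratic integral $\int |\widehat{\nu}(\omega)|^2\widehat{\phi}(\omega)\,\dif\omega<\infty$; your slight gloss in identifying the divergent integral over $B(0,\rho_0)$ with the hypothesis on $B(0,r)$ is harmless, since the first hypothesis (with $r<1$, as the paper assumes without loss) bounds the integrand on the intervening annulus.
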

 Note that the above hypotheses are met if  there are constants $0<c\le C<\infty$ such that
 $c |\omega|^{-\beta_0-d} \le \widehat{\phi}(\omega) \le C |\omega|^{-\beta_0-d} $ a.e. in $B(0,r)$.
\begin{proof}
Assume without loss that $r<1.$
By \cite[Theorem 10.21]{Wend}, 
since $f\in \N(\phi)$ it  has
a generalized Fourier transform
which satisfies  $\widehat{f}/(\widehat{\phi})^{-1/2}\in L_2(\R^d)$.   
By H{\"o}lder's inequality,
\begin{eqnarray*}
&& \int_{B(0,r)} |\widehat{f}(\omega)| |\omega|^{\beta_0/2}|\log \omega|^{-1}\dif \omega\\
 && \le 
  \left(\int_{\R^d}  |\widehat{f}(\omega)|^2/\widehat{\phi}(\omega) \dif \omega\right)^{1/2} 
  \left(\int_{B(0,r)} |\omega|^{\beta_0}|\log \omega|^{-2} \widehat{\phi}(\omega) \dif \omega \right)^{1/2}.
\end{eqnarray*}
 holds, so $\omega\mapsto |\widehat{f}(\omega) |\omega|^{\beta_0/2} |\log \omega|^{-1}\in L_1(B(0,r))$. 
 
Since the support of $\nu$ is compact,
 $\widehat{\nu}$ is entire. 
Let $k\in \mathbb{N}$ be the smallest integer for which there is a multiindex $\alpha$ 
such that  $D^{\alpha} \widehat{\nu}(0)\neq 0$. By
Taylor's theorem, we can write $\widehat{\nu}(z) = H_k(z) + R(z)$, 
where $R(z) = o(|z|^k)$ as $z\to 0$.
Here $H_k$ is the Taylor polynomial of degree $k$ at 0; it happens to be homogeneous because of the minimality of $k$.

For
$\Theta_k\in C^{\infty}(\Sph^{d-1})$
defined by
$H_k (z) = |z|^k\Theta_k(z/|z|)$, 
the set
$$\mathcal{C} := \Bigl\{\zeta \in \Sph^{d-1}
\
 \mid
\
|\Theta_k(\zeta)|>
\frac12 
\|\Theta_k\|_{\infty} \Bigr\}$$
is open and nonempty.
Thus,  in the  cone  $\{z \in \R^d \mid   z/|z|\in \mathcal{C}\}$, we have that
$|H_k(z) |\ge   \frac12 
\|\Theta_k\|_{\infty}  |z|^k$. Since $R(z) = o(|z|^k)$, 
there is $r_0>0$, and a corresponding 
neighborhood
$ \mathcal{R}:=\{z\mid  |z|\le r_0, \ z/|z|\in \mathcal{C}\}$
such that that 
$$\Bigl(\forall z \in \mathcal{R}\Bigr)
\quad
|\widehat{\nu}(z)|\ge \frac{1}4   \|\Theta_k\|_{\infty} |z|^k   .$$  
Since
 $\widehat{\nu}(\omega) = \widehat{f}(\omega) / \widehat{\phi}(\omega)$,
 we have, for $\omega\in \mathcal{R}$, that
\begin{eqnarray*}
|\widehat{f}(\omega)| |\omega|^{\beta_0/2}|\log \omega|^{-1} 
&=& 
|\widehat{\nu}(\omega)|
 \widehat{\phi}(\omega) |\omega|^{\beta_0/2}|\log \omega|^{-1}\\
& \ge& 
 \frac{ \|\Theta_k\|_{\infty}}4
   \widehat{\phi}(\omega)
   |\omega|^{\beta_0/2+k} |\log \omega|^{-1}  .
 \end{eqnarray*}
 Since  $\mathcal{C}$ has positive measure, the integrability of the right hand side
 guarantees that $k> \beta_0/2$. Because $k$ is an integer,
$k\ge 1+\lfloor \beta_0/2\rfloor$, and the result follows.
\end{proof}

%
%
%
\subsection{Surface splines}
\label{SS:surface_splines}
Suppose now that $\phi_m$ is the fundamental solution to $\Delta^m$ on $\R^d$
Then $\phi_m$ is CPD order  $\m= m$, with $\N(\phi_m) = \mathrm{BL}_m(\R^d)$.
We also assume the boundary of $\Omega$ is $C^{\infty}$ (rather than merely Lipschitz),
and express its outer normal  by $\vec{n}:\partial \Omega\to \Sph^{d-1}$.
In this case, 
we replace the condition
  \begin{equation}
  \label{BL_doubling}
   f=\nu*\phi_m+p\in \mathrm{BL}_m(\R^d)
  \text{ with }
\nu\in L_2(\R^d),
\text{  }\mathrm{supp}(\nu)\subset \Omega
\text{  and }\nu\perp\mathcal{P}_{m-1}
\end{equation}
by a stronger version:
\begin{equation}
\label{smooth_BL_extension}
\text{the unique Beppo-Levi extension }
f _e
\text{ of }
f|_{\Omega}
\text{ is in }W_{2,loc}^{2m}(\R^d)
\end{equation}
which will
ensure that the conclusion of 
Theorem \ref{main_interpolation_CPD} holds.

If $f\in W_2^{2m}(\Omega)$, then
\cite[Theorem 8.2]{H_Layer} shows that 
the Beppo-Levi extension $\mathrm{BL}_m(\R^d)$ (i.e., the native space extension)
can be written as 
$f_e = \phi_m*\nu_f +p$.
Indeed, 
as described in \cite[Section 8.2]{H_Layer},  we have that 
\begin{equation}\label{ext_rep}
\nu_f*\phi_m =  \int_{\Omega} \Delta^m f(\alpha) \phi_m(\cdot - \alpha) \dif \alpha + \sum_{j=0}^{m-1} \int_{\partial \Omega} N_j f(\alpha) \lambda_{j,\alpha} \phi_m(x-\alpha)\dif \sigma(\alpha) ,
\end{equation}
where 
$$\lambda_j f = \begin{cases}\mathrm{Tr} \Delta^{j/2} f& j\text{ is even}\\ D_{\vec{n}} \Delta^{(j-1)/2} & j \text{ is odd}\end{cases}$$
and
the operators $N_j:W_2^{2m}(\Omega)\to L_2(\partial \Omega)$ are from 
\cite[Theorem 2.4]{ H_Layer}.
\begin{lemma} 
If $\Omega$ has $C^{\infty}$ boundary, then the condition  (\ref{smooth_BL_extension}) implies  (\ref{BL_doubling}).
\end{lemma}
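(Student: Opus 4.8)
The plan is to read the density $\nu$ off the layer--potential representation (\ref{ext_rep}) of the Beppo--Levi extension and then use the extra regularity (\ref{smooth_BL_extension}) to discard the boundary terms. Note first that (\ref{smooth_BL_extension}) gives $f:=f_e|_\Omega\in W_2^{2m}(\Omega)$, so \cite[Theorem 8.2]{H_Layer} applies and yields $f_e=\phi_m*\nu_f+p$ with $p\in\mathcal{P}_{m-1}$ and $f_e\in\mathrm{BL}_m(\R^d)$, where $\nu_f$ is the native--space deconvolution of $f_e$ for the order $\m=m$. As the continuous analogue of the coefficient condition defining $V_\Xi(\phi_m)$ and of the reproduction identity (\ref{RK}), this deconvolution annihilates polynomials, $\nu_f\perp\mathcal{P}_{m-1}$. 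Distributionally $\nu_f=\Delta^m f_e$: it equals $\Delta^m f$ on $\Omega$, it vanishes on $\R^d\setminus\overline{\Omega}$ because the minimal extension is polyharmonic there, and the surface integrals in (\ref{ext_rep}) contribute layer distributions supported on $\partial\Omega$, encoding the jumps across $\partial\Omega$ of the normal derivatives of orders $m,\dots,2m-1$ (these are carried by the densities $N_jf$ together with the operators $\lambda_j$).

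The main step is to show that (\ref{smooth_BL_extension}) removes exactly these surface terms. I would argue that $f_e\in W_{2,\mathrm{loc}}^{2m}(\R^d)$ is equivalent to $\Delta^m f_e\in L_{2,\mathrm{loc}}(\R^d)$ being an honest function with no singular part on $\partial\Omega$, i.e.\ to the simultaneous vanishing of every jump density $N_jf$, $0\le j\le m-1$. Consequently the boundary integrals in (\ref{ext_rep}) drop out and
\[
\nu_f=\chi_{\Omega}\,\Delta^m f.
\]
Because $f\in W_2^{2m}(\Omega)$ forces $\Delta^m f\in L_2(\Omega)$ and $\overline{\Omega}$ is compact, the function $\nu:=\nu_f$ lies in $L_2(\R^d)$ with $\mathrm{supp}\,\nu\subset\overline{\Omega}$.

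It then remains to collect the properties. The moment condition is inherited: with the layers gone, the full deconvolution coincides with its bulk part, so $\nu=\chi_\Omega\Delta^m f$ still satisfies $\nu\perp\mathcal{P}_{m-1}$. Combined with $f_e=\phi_m*\nu+p$, $p\in\mathcal{P}_{m-1}$, and $f_e\in\mathrm{BL}_m(\R^d)$, this is precisely the hypothesis (\ref{BL_doubling}).

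The crux, and the step I expect to be most delicate, is the equivalence used in the second paragraph: that global $W_{2,\mathrm{loc}}^{2m}$ regularity of $f_e$ is equivalent to the vanishing of all $m$ jump densities and not merely of some of them. This is where the $C^\infty$ smoothness of $\partial\Omega$ and the mapping properties of the $N_j$ from \cite[Theorem 2.4]{H_Layer} enter: for each unmatched normal derivative of order $m\le k\le 2m-1$ the distributional $\Delta^m f_e$ acquires a layer term of the corresponding order on $\partial\Omega$, and $L_{2,\mathrm{loc}}$ membership rules out all of them at once. A minor point is that $\mathrm{supp}\,\nu\subset\overline{\Omega}$ rather than the open set $\Omega$; since $\partial\Omega$ is a null set and $\Delta^m f\in L_2(\Omega)$ this is harmless and matches the usage of $\Omega$ as a compact set elsewhere in the paper.
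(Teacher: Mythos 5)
Your proposal follows the same skeleton as the paper's proof: invoke \cite[Theorem 8.2]{H_Layer} and the representation (\ref{ext_rep}), show that hypothesis (\ref{smooth_BL_extension}) forces all the boundary densities $N_j f$ to vanish, and conclude that $\nu_f=\Delta^m f\in L_2(\R^d)$ is supported in $\Omega$ with $\nu_f\perp\mathcal{P}_{m-1}$. Where you genuinely differ is the mechanism for killing the layer terms. The paper argues through traces: since $f_e\in W_2^{2m}(B(0,R))$, the interior and exterior traces $\lambda_k^{\pm}f_e$ coincide, and then the jump relations for the potentials $V_j$ from \cite[Corollary 3.4]{H_Layer}, $\lambda_j^+V_jg-\lambda_j^-V_jg=(-1)^jg$, give $N_jf=0$ directly, one density at a time. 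You instead pass to the distributional identity $\Delta^m f_e=\nu_f$: the regularity puts $\Delta^m f_e$ in $L_{2,loc}$, applying $\Delta^m$ to the surface integrals in (\ref{ext_rep}) produces distributions supported on $\partial\Omega$, and an $L_{2,loc}$ function has no singular part on a null set. This is a legitimate alternative and conceptually clean, but it has a soft spot exactly where you flag it: after concluding that the sum of the layer distributions vanishes, you still need that no nontrivial combination of layers of the distinct orders $0,\dots,m-1$ can cancel to zero. Your phrase ``rules out all of them at once'' skips this; it is true (test against functions behaving like powers of the signed distance to $\partial\Omega$ times arbitrary tangential cutoffs and induct downward on the order, using the smoothness of $\partial\Omega$), but it is precisely the content that the jump relations of \cite[Corollary 3.4]{H_Layer} supply for free, which is why the paper's route is shorter and fully rigorous as stated. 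A second, smaller point: you justify $\nu_f\perp\mathcal{P}_{m-1}$ only heuristically, as a ``continuous analogue'' of (\ref{RK}); the paper cites \cite[Lemma 8.1]{H_Layer} for this fact, and your argument needs that citation too, since the moment condition is not otherwise derived in your proposal.
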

\begin{proof}
Suppose (\ref{smooth_BL_extension}) holds. Let $\overline{\Omega}\subset B(0,R)$, for some $R>0$.
Then because $f_e\in W_2^{2m}(B(0,R))$, 
the trace theorem guarantees that 
$\lambda_k f_e\in W_2^{2m-j-1}(\partial\Omega)$ for $0\le k\le 2m-1$; 
in particular, the trace $\lambda_k^+$
 from $\Omega$
coincides with the trace $\lambda_k^-$ from $\R^d\setminus \overline{\Omega}$. 

By the jump conditions \cite[Corollary 3.4]{H_Layer}
for layer potentials 
$$V_j g= \int_{\partial \Omega} g(\alpha) \lambda_{j,\alpha}\phi_m(\cdot -\alpha) \dif \sigma(\alpha),$$
which state that $\lambda_j^+ V_jg - \lambda_j^- V_j g = (-1)^{j}  g$,
we have that $N_j f=0$.
Thus (\ref{ext_rep}) consists only of one term, 
and  $\nu_f = \Delta^m f\in L_2(\R^d)$, which is supported in $\Omega$.

Finally, \cite[Lemma 8.1]{H_Layer} guarantees that $\nu_f\perp \mathcal{P}_{m-1}$.
\end{proof}

 In  case (\ref{smooth_BL_extension}) holds,  Corollary \ref{zeros_cor} applies and
$$|\opJ_{\sigma-m}(f-I_{\Xi} f)|_{\dot{H}^m} \le Cq^{m-\sigma}h^{m}\|\nu\|_{L_2(\Omega)}.$$
 Furthermore, because $f$ and $I_X f$ have generalized Fourier transforms of order $m/2$,
 we can use Lemma \ref{Sob_equiv} to ensure that
  $|f-I_{\Xi} f|_{W_2^{\sigma}(\R^d)} \sim  |f-I_{\Xi} f|_{H^{\sigma}}$ 
  whenever $\sigma \ge m$,
  so for $m\le \sigma$ with $\lceil \sigma\rceil< 2m-d/2$, we have
  \begin{equation}
  \label{ss_error}
  |f-I_\Xi f|_{W_2^{\sigma}(\Omega)}\le C|f-I_\Xi f|_{\dot{H}^{\sigma}}\le 
  Cq^{m-\sigma}h^{m}\|\nu\|_{L_2(\Omega)}.
  \end{equation}
Here we have used that $|u|_{\dot{H}^{\sigma}} \le  |\opJ_{\sigma-m}(u)|_{\dot{H}^m} $ when $\sigma\ge m$.
In particular, if the point set $X$ is quasi-uniform with mesh ratio $\rho$, we have, with $\rho$ dependent constant
 $$|f-I_\Xi f|_{W_2^{\sigma}(\R^d)}\le Ch^{2m-\sigma}\|\nu\|_{L_2(\Omega)}.$$

\begin{remark}
A necessary and sufficient condition for $f\in W_2^{2m}(\Omega)$ to satisfy (\ref{smooth_BL_extension})
is that $f\in \cap_{j=0}^{m-1}\mathrm{ker}(N_j)$; this is  \cite[Corollary 8.3]{H_Layer}.
\end{remark}

\begin{remark}
A condition which implies (\ref{smooth_BL_extension})
 has been considered
 by Gutzmer and Melenk  in 
\cite{GM}. 
Namely, that $f$ is  satisfies {\em natural boundary conditions}: 
 \begin{equation}\label{natural}
 f\in W_2^{2m}(\Omega)\text{ and }
 D^{\alpha}f(x)=0 \text{ for }
 x\in \partial \Omega \text{ and }m\le |\alpha|\le 2m-1.
 \end{equation}
 The result \cite[Lemma 2]{GM} shows that if $f$ satisfies (\ref{natural}), then $f$ satisfies (\ref{smooth_BL_extension}).
 Thus (\ref{ss_error}) provides a higher order counterpart to their result
 then \cite[Theorem 2]{GM} shows that for sufficiently dense $\Xi\subset \Omega$,
 $$|f-I_{\Xi} f|_{{W}_2^k(\Omega)}\le h^{2m-k} |f|_{{W}_2^{2m}(\Omega)}$$
 holds for $k\le m$.  We note that the results of \cite{GM} hold under more general conditions, namely for $\Omega$ having
 Lipschitz boundary without  the assumption of quasi-uniformity on $\Xi$.
 \end{remark}

%
%
%
%
%
\appendix
\section{Regular  local polynomial reproductions}
\label{appendix}
 \begin{lemma} If $\Omega\subset \R^d$ is compact and satisfies an interior cone condition, 
then for every $L>0$, there exists a constant $K$ depending on $L$ and the cone aperture,  
and $h_0>0$ depending
on $L$ and both cone parameters, so that for
any finite subset
$\Xi\subset \Omega$ with $h(\Xi,\Omega)<h_0$  there is a stable, local polynomial reproduction of order $L$.
I.e.,  there
is a map $a(\cdot,\cdot): \Xi \times \Omega \to \R$ which satisfies the following four conditions:
\begin{enumerate}
\item for every $z\in \Omega$ if $\dist(\xi,z)>Kh$ then $a(\xi,z)=0$
\item for every $z\in \Omega$, $\sum_{\xi\in \Xi} \bigl| a(\xi,z) \bigr|\le 3$
\item for every $p\in \mathcal{P}_{L}$ and $z\in \Omega$, $\sum_{\xi\in \Xi }a(\xi,z) p(\xi) =p(z)$
\item for every $\xi\in \Xi $, $a(\xi,\cdot)$ is   smooth.
\end{enumerate}
\end{lemma}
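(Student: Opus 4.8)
The plan is to modify the proof of \cite[Theorem 3.14]{Wend} by \emph{freezing} the local geometry on patches of diameter proportional to $h$ and then blending the resulting local reproductions with a smooth partition of unity. The guiding observation is that the classical construction produces only measurable coefficients because the local point set used to reproduce $p(z)$ is allowed to vary with $z$; once that set is held fixed on a patch, the dependence of the coefficients on $z$ enters solely through the values $q_1(z),\dots,q_Q(z)$ of a fixed basis of $\mathcal{P}_L$, and is therefore polynomial, hence $C^\infty$.

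First I would fix a basis $q_1,\dots,q_Q$ of $\mathcal{P}_L$ and construct a smooth partition of unity $\{\psi_\nu\}$ subordinate to a cover of $\Omega$ by balls $B_\nu=B(c_\nu,R)$, with $R$ a fixed multiple of $h$, arranged so that the cover has bounded overlap, $\sum_\nu \psi_\nu\equiv 1$ on $\Omega$, and $|D^\alpha\psi_\nu|\le C_\alpha h^{-|\alpha|}$. For $h<h_0$ the interior cone condition furnishes, at each center $c_\nu$, a cone inside $\Omega$ of fixed aperture and radius $r$; since $R\ll r$, a slightly shrunk cone serves every $z\in B_\nu$ at once, and the points of $\Xi$ lying in it (equivalently, in $B(c_\nu,\kappa h)$ for a suitable $\kappa$) contain a subset $\Xi_\nu$ that is unisolvent for $\mathcal{P}_L$. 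On each patch I would then take the minimal-$\ell_2$ reproduction $a_\nu(\cdot,z)=M_\nu^{\mathsf T}(M_\nu M_\nu^{\mathsf T})^{-1}\mathbf q(z)$, where $M_\nu=(q_j(\xi))_{j,\,\xi\in\Xi_\nu}$ and $\mathbf q(z)=(q_1(z),\dots,q_Q(z))^{\mathsf T}$; this is polynomial in $z$ and satisfies $\sum_{\xi}a_\nu(\xi,z)p(\xi)=p(z)$ for all $p\in\mathcal{P}_L$. Finally I would set $a(\xi,z):=\sum_\nu \psi_\nu(z)\,a_\nu(\xi,z)$.

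Verifying the four properties is then routine. Polynomial reproduction follows from $\sum_\nu\psi_\nu(z)\sum_{\xi}a_\nu(\xi,z)p(\xi)=\sum_\nu\psi_\nu(z)p(z)=p(z)$; locality holds since $a(\xi,z)\ne0$ forces $z\in B_\nu$ and $\xi\in B(c_\nu,\kappa h)$ for a common $\nu$, whence $|\xi-z|\le \kappa h+R\le Kh$; smoothness is immediate, as $a(\xi,\cdot)$ is a locally finite sum of products of $C^\infty$ functions with polynomials. For stability one has $\sum_\xi|a(\xi,z)|\le\sum_\nu\psi_\nu(z)\sum_\xi|a_\nu(\xi,z)|\le\max_\nu\sup_{z\in B_\nu}\|a_\nu(\cdot,z)\|_{\ell_1}$, and it is worth stressing that the partition of unity does \emph{not} inflate the constant, precisely because its weights sum to $1$.

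The main obstacle is therefore the single estimate $\sup_{z\in B_\nu}\|a_\nu(\cdot,z)\|_{\ell_1}\le 3$, uniform in $\nu$. I expect to obtain this from the same norming-set (Markov-type) argument that underlies \cite[Theorem 3.14]{Wend}: after rescaling $B_\nu$ to a reference ball, the cone condition guarantees a well-distributed unisolvent sample at every location and scale $h$, so the condition number of $M_\nu M_\nu^{\mathsf T}$ (in the scaled monomial basis) is bounded in terms of $L$, $d$, and the cone aperture alone. Shrinking $R$ relative to the norming radius $\kappa h$ — at the cost of enlarging $K$, which is permitted — then drives the local $\ell_1$ constant below $3$. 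The one point demanding care is that the cone \emph{direction} may vary with the base point; this is handled by the observation above that for $h$ small the whole patch $B_\nu$ can be serviced by a single cone anchored at $c_\nu$.
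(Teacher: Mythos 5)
Your items (1), (3), (4) are fine, and your final blending step (smooth partition of unity over a finite cover) is exactly what the paper does. The genuine gap is in item (2) --- the stability bound --- which you yourself identify as the main obstacle, and neither route you sketch closes it. If $\Xi_\nu$ is a $Q$-point unisolvent subset ($Q=\dim\mathcal{P}_L$), unisolvence gives no quantitative control: the cone condition together with small fill distance guarantees that the \emph{full} set $\Xi\cap B(c_\nu,\kappa h)$ is a norming set for $\mathcal{P}_L$ (this is what underlies \cite[Theorem 3.14]{Wend}), but an arbitrary unisolvent $Q$-point subset of it can be arbitrarily close to degenerate, so $\lambda_{\min}(M_\nu M_\nu^{\mathsf T})$ is \emph{not} bounded below in terms of $L$, $d$ and the cone aperture alone; you would need a well-conditioned selection (e.g.\ a Fekete-type choice maximizing the Vandermonde determinant), which is a further argument, and even then the natural $\ell_1$ bound is of order $Q$ times the norming constant, not $3$. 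If instead $\Xi_\nu$ is the full local point set, your condition-number claim is correct and the least-squares solution has uniformly bounded $\ell_2$ norm, but passing to $\ell_1$ costs a factor $\sqrt{\#\Xi_\nu}$, which is unbounded because points of $\Xi$ may cluster. The Hahn--Banach duality behind norming sets produces \emph{some} coefficients with $\ell_1$ norm $\le 2$, but non-constructively; nothing forces the minimal-$\ell_2$ solution to obey that bound. Finally, the claim that shrinking $R$ relative to $\kappa h$ ``drives the local $\ell_1$ constant below $3$'' has no supporting mechanism: the coefficients reproducing point evaluation near the patch center are dictated by the one-sided cone geometry of $\Xi_\nu$, which does not improve as $R\to 0$.

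The paper's proof is designed precisely to avoid needing any uniform conditioning. It starts from Wendland's measurable reproduction $\tilde{a}$, which already has $\ell_1$ norm $\le 2$ at each single point $y$; it freezes all of its coefficients except those on one unisolvent $Q$-point subset $\Xi^{\flat}$, and applies the implicit function theorem (the relevant Jacobian is the Vandermonde matrix of $\Xi^{\flat}$, nonsingular by unisolvence) to solve for those $Q$ coefficients as a smooth function of the evaluation point. Continuity then keeps the $\ell_1$ norm below $3$ on a ball $B\bigl(y,r(y)\bigr)$ whose radius is allowed to depend on $y$, on $\Xi$, and in particular on how ill-conditioned the local Vandermonde matrix is; compactness of $\Omega$ then yields a finite subcover, after which the partition-of-unity blending proceeds as in your proposal. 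If you want to salvage your construction, adopt exactly this point: let the patch radius be point-dependent (possibly tiny) and invoke compactness, rather than fixing $R\propto h$ in advance; alternatively, prove a careful well-conditioned-subset selection and settle for a stability constant $C(L,d,\mathrm{cone})$ --- which would suffice for the application in Section \ref{S:Approx}, where the constant is a generic $\Gamma$, but not for the lemma as stated with the constant $3$.
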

\begin{proof}
Let $N:=\dim\mathcal{P}_{L}$. Select a basis $\{p_j\mid 1\le j\le N\}$ for $\mathcal{P}_{L}$.
Then the result
 \cite[Lemma 3.14]{Wend} 
guarantees the existence of a map $\tilde{a}$ which satisfies items 1-3.
Indeed,
for every $z\in \Omega$, $\sum_{\xi\in \Xi} \bigl| \tilde{a}(\xi,z) \bigr|\le 2$ and if  
$\dist(\xi,z)>\tilde{K}h$ then $\tilde{a}(\xi,z)=0$.

Let $K=\tilde{K}+1$.
Pick $y\in\Omega$. 
Let $\Xi_0:= \Xi \cap B(y, Kh)$. 
Because $\Xi_0$ is unisolvent, it contains a unisolvent subset $\Xi^{\flat}\subset \Xi_0$ 
with $\#\Xi^{\flat}=N$ (i.e., it contains a subset which is poised for interpolation by $\mathcal{P}_{L}$).
Enumerate $\Xi^{\flat}:=\{\xi_1,\dots ,\xi_N\}$, and let
  $\Xi^{\sharp}:=\Xi_0\setminus \Xi^{\flat}$.

Consider now the $C^{\infty}$ function $F:\R^d\times \R^N\to \R^N$ defined by 
$$
\bigl(F(x,b)\bigr)_j = p_j(x) - \sum_{k=1}^N b_k p_j(\xi_k) - 
\sum_{\zeta\in \Xi^{\sharp}} \tilde{a}(\zeta,y)p_j(\zeta).
$$
For $b_0:=\tilde{a}(\cdot,y)\left|_{\Xi^{\flat}}\right.$,
  $F(y, b_0)=0$, and 
$D_b F(x,b) =\bigl(\frac{\partial F_j}{\partial b_k}\bigr) = \bigl(p_j(\xi_k)\bigr)$ 
 is the Vandermonde matrix for $\Xi^{\flat}$ and 
 is therefore nonsingular for all $x$. 
By the implicit function theorem, there is $B(y,r_1)$ and
a smooth function
$g:B(y,r_1)\to \R^N$ so that
$g(y) =b_0= \tilde{a}(\cdot,y)\left|_{\Xi^{\flat}}\right. $ and
 $F(x,g(x)) =0$ for all $x\in B(y,r_1)$.

Note that 
$\|g(y)\|_{\ell_1(\R^N)}
\le 
2-\|\tilde{a}(\cdot,y)\left|_{\Xi^{\sharp}}\right.\|_{\ell_1(\Xi^{\sharp})}$.  
 It follows from continuity of $g$ that there is $r_2\in(0,r_1)$ so that we  for all $x\in B(y,r_2)$  
 $$\|g(x)\|_{\ell_1(\R^N)} \le 3-\|\tilde{a}(\cdot,y)\left|_{\Xi^{\sharp}}\right.\|_{\ell_1(\Xi^{\sharp})}.$$ 
  By decreasing the radius even further, i.e., taking $r(y):=\min(r_2,h)$,
  we have that 
  for every $x\in B\bigl(y,r(y)\bigr)$ 
  and for every $\xi\in \Xi_0$, we have
  $\dist(x,\xi) \le Kh$, since $\dist(y,\xi)< \tilde{K}h$.

For $x\in B(y,r)$, set 
$$a_y(\xi,x):=
 \begin{cases}
 \bigl( g(x)\bigr)_j & \xi = \xi_j\in \Xi^{\flat}\\ 
\tilde{a}(\xi,y)& \xi \in \Xi^{\sharp}\\
0&\xi \in \Xi\setminus \Xi_0.\end{cases}$$
and note that $a_y$ is a local polynomial reproduction of order $L$, locality $\tilde{K}$ and  stability 3 in $B(y,r)$.

 By compactness, there is a finite cover of the form $\Omega = \bigcup_{j=1}^M B(y_j,r(y_j))$. Denote by
  $a_j:\Xi \times \Omega\to \R$ the  extension by zero of  
  $a_{y_j}:\Xi \times B(y_j,r(y_j))\to \R$. 
  Let $\bigl(\psi_j\bigr)_{j=1\dots M}$ be a smooth partition of unity subordinate to this cover: i.e.,
consisting of functions $\psi_j:\Omega\to[0,1]$ with $\mathrm{supp}(\psi_j)\subset B(y_j,r(y_j))$ and
$\sum_{j=1}^M \psi_j =1$.

 Then $a:\Xi\times \Omega\to \R$ defined by
 $a(\xi,z):=\sum_{j=1}^M \psi_j(z) a_j(\xi,z)$
 is a smooth local polynomial reproduction, since 
 $$
 \sum_{\xi\in \Xi} p(\xi) a(\xi,z) = 
 \sum_{j=1}^M \psi_j(z) \sum_{\xi\in \Xi} p(\xi)a_j(\xi,z)
 =
 \sum_{
	 \substack{j=1\\ z\in B(y_j,r(y_j))}
 }^M 
 	\psi_j(z) p(z)
 = p(z).
 $$
 We have also that $\sum_{\xi\in \Xi} |\sum_{j=1}^M \psi_j(z) a_j(\xi,z)|\le 3$, 
 so $a$ 
 has stability constant $\Gamma\le 3$.
 Finally, for $z\in \Omega$, if $a(z,\xi)\neq 0$, then  for some $j$,
  $z\in B(y_j,r(y_j))$ and $a_j(\xi,z)\neq0$.
  But this implies that $|z-\xi|\le Kh$.
\end{proof}

\section*{Acknowledgment}
The authors wish to thank the anonymous reviewers whose comments helped improve this manuscript.
\bibliographystyle{amsplain}

\end{document}